\newtheorem*{thm*}{Theorem}
\newtheorem{thm}{Theorem}[section]
\newtheorem{prop}[thm]{Proposition}
\newtheorem{lem}[thm]{Lemma}
\newtheorem{cor}[thm]{Corollary}
\newtheorem{ex}[thm]{Example}
\def\N{\mathbb{N}}
\def\N{\mathbb{N}}
\def\R{\mathbb{R}}
\def\DD{\mathcal{D}}
\def\AA{\mathcal{A}}
\def\ind{\mathds{1}}
\newcommand{\ii}{\mathbf{i}}
\newcommand{\ww}{\mathbf{w}}
\newcommand{\jj}{\mathbf{j}}
\newcommand{\zz}{\mathbf{z}}
\newcommand{\p}{{\underline{p}}}
\numberwithin{equation}{section}
\begin{document}
\title[Weighted Birkhoff averages]{On the multifractal spectrum of weighted Birkhoff averages}

\author{Bal\'azs B\'ar\'any}
\address[Bal\'azs B\'ar\'any]{Budapest University of Technology and Economics, Department of Stochastics, MTA-BME Stochastics Research Group, P.O.Box 91, 1521 Budapest, Hungary}
\email{balubsheep@gmail.com}

\author{Micha\l\ Rams}
\address[Micha\l\ Rams]{Institute of Mathematics, Polish Academy of Sciences, ul. \'Sniadeckich 8, 00-656 Warszawa, Poland}
\email{rams@impan.pl}

\author{Ruxi Shi}
\address[Ruxi Shi]{Institute of Mathematics, Polish Academy of Sciences, ul. \'Sniadeckich 8, 00-656 Warszawa, Poland}
\email{rshi@impan.pl}

\subjclass[2010]{Primary 37C45 Secondary 37B10 37B40 37D35}
\keywords{weighted Birkhoff averages, }
\thanks{Bal\'azs B\'ar\'any acknowledges support from grants OTKA K123782 and OTKA~FK134251. Micha\l\ Rams was supported by National Science Centre grant 2019/33/B/ST1/00275 (Poland).}

\maketitle
\begin{abstract}
	In this paper, we study the topological spectrum of weighted Birkhoff averages over aperiodic and irreducible subshifts of finite type. We show that for a uniformly continuous family of potentials, the spectrum is continuous and concave over its domain. In case of typical weights with respect to some ergodic quasi-Bernoulli measure, we determine the spectrum.  Moreover, in case of full shift and under the assumption that the potentials depend only on the first coordinate, we show that our result is applicable for regular weights, like M\"obius sequence.
\end{abstract}
\date{\today}

\thispagestyle{empty}

	\maketitle
	
\section{Introduction}

Let $T\colon X\mapsto X$ be a measure preserving transformation of the standard  Borel  probability space $(X,\mathcal{B},\nu)$. The well-known Theorem of Birkhoff states that for any $f\in L^1(X,\mathcal{B},\nu)$, the limit
$$
\lim_{n\to\infty}\frac{1}{n}\sum_{k=0}^{n-1}f(T^kx)\text{ exists for $\nu$-almost every $x$.}
$$
Moreover, if $\nu$ is an ergodic measure with respect to $T$ then
$$
\lim_{n\to\infty}\frac{1}{n}\sum_{k=0}^{n-1}f(T^kx)=\int fd\nu\text{ for $\nu$-almost every $x$.}
$$
The ``time'' averages $\frac{1}{n}\sum_{k=0}^{n-1}f(T^kx)$ are called the Birkhoff averages. If $T$ is uniquely ergodic then for a continuous potential $f\in C(X)$, the limit $\lim_{n\to\infty}\frac{1}{n}\sum_{k=0}^{n-1}f(T^kx)$ exists for all $x\in X$ and converges to a constant. That is, there is no multifractal behaviour. However, if the system $(X,T)$ has a large family of ergodic measures (for example, a full shift), one may expect that the limit of the Birkhoff averages can take a wide variety of values. It is a natural question to ask, how large (for example, topological entropy, Hausdorff or packing dimension) is the set of points in $X$ for which the Birkhoff average converges to a prescribed value $\alpha$? It leads to the multifractal analysis and there has been a considerable amount of works on this.

As far as we know, the first work is due to Besicovitch \cite{B1935} where he studied the Hausdorff dimension of sets given by the frequency of digits in dyadic expansions. Then it was
subsequently extended by Eggleston \cite{E1949}. For further results on digit frequencies, see Barreira, Saussol and Schmeling \cite{BSS}. For multifractal analysis of Birkhoff averages, we refer to \cite{FF1998, FFW2001, O1998, S1999, FLW2002, FLP2008, FF2000, BSS2002} and references therein.

Let $X$ be a compact metric space, let $T\colon X\mapsto X$ be a continuous transformation, and let $\varphi\colon X\mapsto\R$ be a continuous potential. Takens and Verbitskiy \cite{TV} showed that for an $\alpha\in\R$, the topological entropy of the set
$$
E(\alpha)=\left\{x\in X:\lim_{n\to\infty}\frac{1}{n}\sum_{k=0}^{n-1}\varphi(T^kx)=\alpha\right\}
$$
equals to the Legendre transform of the topological pressure, which is equal to the supremum of the entropy of all invariant and ergodic measures for which the ``space'' average (i.e. the integral of $\varphi$) equals to $\alpha$. 

In this paper, we are interested in the generalisation of the problem above for weighted Birkhoff averages. Let $\ww=\{w_k\}_{k\in\N}$ be a sequence of bounded reals and let $\varphi\colon X\mapsto\R^d$ be a continuous potential and let $\alpha\in\R^d$. Is it possible to determine
$$
h_{\rm top}\left(\left\{x\in X:\lim_{n\to\infty}\frac{1}{n}\sum_{k=0}^{n-1}w_k\varphi(T^kx)=\alpha\right\}\right)=?
$$

Weighted Birkhoff averages were studied since 1940', since the celebrated Theorem of Wiener and Wintner. Since then several generalisations of the weighted ergodic theorems appeared, see for example \cite[Chapter~21]{EFHNbook}. The first study of weighted Birkhoff averages in the context of spectrum was by Fan \cite{F3}. Lately, there were an additional motivation for this problem by Sarnak's conjecture \cite{S}. Let us recall the definition of the M\"obius sequence, $\boldsymbol{\mu}\colon\N\mapsto\{-1,0,1\}$,
$$
\boldsymbol{\mu}(n)=\begin{cases} (-1)^k & \text{ if $n$ is a product of $k$ distinct primes,}\\
0 & \text{ if there exists $a\geq2$ such that $a^2\vert n$.}
\end{cases}
$$
Sarnak's conjecture \cite{S} claims that if $T\colon X\mapsto X$ is continuous over the compact metric space $X$ with topological zero entropy  then for every $x\in X$ and every continuous potential $\varphi\colon X\mapsto\R$
$$
\lim_{n\to\infty}\frac{1}{n}\sum_{k=0}^{n-1}\boldsymbol{\mu}(k)\varphi(T^kx)=0.
$$
Even though Sarnak's conjecture has been verified for various special dynamical systems (e.g. rotations on the circle, automorphism of the torus with entropy zero etc.), it is still widely open in general. We refer to \cite{FKL2018} for a survey of many recent results on Sarnak conjecture.
El Abdalaoui, Ku\l aga-Przymus, Lema\'nczyk and de la Rue \cite{AKLd} showed Birkhoff's type ergodic theorem with M\"obius weight.
\begin{thm*}[\cite{AKLd}]
	Let $T$ be  an  automorphism  of  a  standard  Borel  probability space $(X,\mathcal{B},\nu)$ and let $f\in L^1(X,\mathcal{B},\nu)$.  Then, for $\nu$-almost every $x\in X$, we have
	$$
	\lim_{n\to\infty}\frac{1}{n}\sum_{k=0}^{n-1}\boldsymbol{\mu}(k)f(T^{k}(x))=0.
	$$
\end{thm*}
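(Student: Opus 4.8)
The plan is to prove this --- equivalently, that the M\"obius function is a \emph{good weight for the pointwise ergodic theorem with limit zero} on $L^1$ --- by first cutting down to a dense class of functions and then splitting each function according to its spectral type, feeding the discrete and the continuous part into two different arithmetic inputs. To begin, since $|\boldsymbol{\mu}(k)|\le1$ one has the pointwise domination
$$
\sup_{n\ge1}\left|\frac1n\sum_{k=0}^{n-1}\boldsymbol{\mu}(k)f(T^kx)\right|\le\sup_{n\ge1}\frac1n\sum_{k=0}^{n-1}\bigl|f(T^kx)\bigr|,
$$
whose right-hand side is the ergodic maximal function of $|f|$, of weak type $(1,1)$ by the maximal ergodic theorem. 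Hence the set of $f\in L^1(X,\mathcal B,\nu)$ for which the conclusion holds is closed in $L^1$, and by the Banach principle it suffices to prove it on a dense subset; in particular one may assume $f\in L^\infty$. By the ergodic decomposition of $\nu$ one may also assume $T$ is ergodic.

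Next, write $f=f_{\mathrm d}+f_{\mathrm c}$ with $f_{\mathrm d}=\mathbb E_\nu[f\mid\mathcal K]$ the conditional expectation onto the Kronecker factor $\mathcal K$ and $f_{\mathrm c}=f-f_{\mathrm d}$; conditional expectation being an $L^\infty$-contraction, both summands are bounded, $f_{\mathrm d}$ lies in the closed span of the eigenfunctions of $T$, and $f_{\mathrm c}$ has continuous spectral measure $\sigma_{f_{\mathrm c}}$. By the maximal inequality above the two parts may be treated separately. For $f_{\mathrm d}$: since $T$ is ergodic it has only countably many eigenvalues $e^{2\pi i\theta_j}$, and $f_{\mathrm d}$ is an $L^2$-limit of finite linear combinations $\sum_j c_j\phi_j$ of the corresponding eigenfunctions; for a single eigenfunction,
$$
\frac1n\sum_{k=0}^{n-1}\boldsymbol{\mu}(k)\phi_j(T^kx)=\phi_j(x)\cdot\frac1n\sum_{k=0}^{n-1}\boldsymbol{\mu}(k)e^{2\pi ik\theta_j}\longrightarrow0
$$
by Davenport's estimate $\sup_{\theta}\bigl|\tfrac1N\sum_{k<N}\boldsymbol{\mu}(k)e^{2\pi ik\theta}\bigr|=O_A\bigl((\log N)^{-A}\bigr)$, which settles every finite combination; the passage to $f_{\mathrm d}$ is again the Banach principle.

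The substance of the proof is the continuous part, which I would treat through the Bourgain--Sarnak--Ziegler orthogonality criterion: if $(a(n))_{n\ge1}$ is bounded and $\tfrac1N\sum_{n\le N}a(pn)\overline{a(qn)}\to0$ for every pair of distinct primes $p\ne q$, then $\tfrac1N\sum_{n\le N}\boldsymbol{\mu}(n)a(n)\to0$. Applying it, for fixed $x$, to $a(n)=f_{\mathrm c}(T^nx)$, what is needed is that for $\nu$-a.e.\ $x$ and all distinct primes $p\ne q$,
$$
\frac1N\sum_{n\le N}f_{\mathrm c}(T^{pn}x)\,\overline{f_{\mathrm c}(T^{qn}x)}\longrightarrow0\qquad(N\to\infty).
$$
On the mean level this is easy. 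Viewed on $(X\times X,\nu\times\nu)$ with the transformation $T^p\times T^q$, the function $f_{\mathrm c}\otimes\overline{f_{\mathrm c}}$ has continuous spectral measure: it is a convolution of two continuous measures, each obtained from $\sigma_{f_{\mathrm c}}$ by a finite-to-one change of variable on $\R/\Z$. Hence its spectral measure has no atom at $0$, von Neumann's mean ergodic theorem gives $\tfrac1N\sum_{n\le N}(f_{\mathrm c}\otimes\overline{f_{\mathrm c}})\circ(T^p\times T^q)^n\to0$ in $L^2(\nu\times\nu)$, and integrating over the diagonal, $\tfrac1N\sum_{n\le N}\int f_{\mathrm c}(T^{pn}x)\overline{f_{\mathrm c}(T^{qn}x)}\,d\nu(x)=\tfrac1N\sum_{n\le N}\overline{\widehat{\sigma}_{f_{\mathrm c}}((q-p)n)}\to0$, again by continuity of $\sigma_{f_{\mathrm c}}$.

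The hard part will be to upgrade this $L^2(\nu\times\nu)$-mean statement to one valid for $\nu$-almost every $x$ --- the diagonal is $(\nu\times\nu)$-null, so no pointwise ergodic theorem on the product applies to it directly. I would close this gap by invoking Bourgain's pointwise theorem for bilinear ergodic averages, which guarantees that $\tfrac1N\sum_{n\le N}f_{\mathrm c}(T^{pn}x)\overline{f_{\mathrm c}(T^{qn}x)}$ converges for $\nu$-a.e.\ $x$ (since $f_{\mathrm c}\in L^\infty$ and $p\ne q$); as these averages are bounded and converge to $0$ in $L^1(\nu)$ by the previous paragraph, the a.e.\ limit must be $0$. (Alternatively, a second-moment estimate for $\int_X|\tfrac1N\sum_{n\le N}f_{\mathrm c}(T^{pn}x)\overline{f_{\mathrm c}(T^{qn}x)}|^2\,d\nu(x)$ together with a Borel--Cantelli argument along a sparse sequence of $N$'s, the $O(1/N)$ oscillation of successive averages filling the gaps, would do the same.) Granting this, Bourgain--Sarnak--Ziegler gives $\tfrac1n\sum_{k<n}\boldsymbol{\mu}(k)f_{\mathrm c}(T^kx)\to0$ for $\nu$-a.e.\ $x$; adding the contribution of $f_{\mathrm d}$ and undoing the reductions of the first paragraph finishes the proof. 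Everything except the almost-everywhere bilinear correlation bound is soft (maximal inequality, spectral decomposition) or a black box (Davenport, Bourgain--Sarnak--Ziegler); that bound is where the main difficulty sits.
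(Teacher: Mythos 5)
This theorem is not proved in the paper at all --- it is quoted directly from El Abdalaoui, Ku\l aga-Przymus, Lema\'nczyk and de la Rue \cite{AKLd} as background motivation, so there is no in-paper proof to compare against. Judged on its own terms, your overall architecture (maximal ergodic inequality plus Banach principle to reduce to $f\in L^\infty$; ergodic decomposition; Kronecker splitting $f=f_{\mathrm d}+f_{\mathrm c}$; Davenport for $f_{\mathrm d}$; Bourgain--Sarnak--Ziegler for $f_{\mathrm c}$) is a sensible route, and you are right that everything funnels into the a.e.\ bilinear decay $\tfrac1N\sum_{n\le N}f_{\mathrm c}(T^{pn}x)\overline{f_{\mathrm c}(T^{qn}x)}\to 0$. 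But the justification you give for that decay has a concrete gap. Your product-space computation produces $\|\tfrac1N\sum_n (f_{\mathrm c}\otimes\overline{f_{\mathrm c}})\circ(T^p\times T^q)^n\|_{L^2(\nu\times\nu)}\to 0$, which says nothing about the diagonal $\{x=y\}$ because that diagonal is $\nu\times\nu$-null; and ``integrating over the diagonal'' is really the separate identity $\int_X f_{\mathrm c}(T^{pn}x)\overline{f_{\mathrm c}(T^{qn}x)}\,d\nu=\widehat{\sigma}_{f_{\mathrm c}}((p-q)n)$, whose Ces\`aro average tends to $0$. That is only the \emph{first moment} $\int a_N\,d\nu\to0$, not $\|a_N\|_{L^1(\nu)}\to0$ as you assert; and Bourgain's pointwise bilinear theorem together with ``integral tends to zero'' gives you only $\int (\lim a_N)\,d\nu=0$, not $\lim a_N=0$ a.e.

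To close the gap you actually need $\|a_N\|_{L^2(\nu)}\to0$, and this does not come from the spectral argument on $X\times X$ nor from the single first-moment identity. What makes it true is the characteristic-factor theorem for linear double averages: for $p\neq q$, the Kronecker factor $\mathcal K$ is characteristic for $\tfrac1N\sum_n g_1(T^{pn}x)\,g_2(T^{qn}x)$ (via van der Corput / Furstenberg--Host--Kra), so if $g_1=f_{\mathrm c}\perp\mathcal K$ then the averages tend to $0$ in $L^2(\nu)$; if one wishes to do it by hand, van der Corput reduces the second moment to expressions of the form $\tfrac1H\sum_{|h|\le H}|\widehat\sigma_{f_{\mathrm c}}(ph)\,\widehat\sigma_{f_{\mathrm c}}(qh)|$ which vanish by Wiener's lemma (modulo care when $T$ is not totally ergodic). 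Once you have $\|a_N\|_{L^2(\nu)}\to0$, combining it with Bourgain's a.e.\ convergence and boundedness forces the a.e.\ limit to be $0$, and the rest of your reduction stands. The second-moment-plus-Borel--Cantelli alternative you float is also not free as stated, since without a quantitative decay rate for $\|a_N\|_{L^2(\nu)}$ one cannot both make $\sum_k\|a_{N_k}\|_{L^2}^2$ summable and keep $N_{k+1}/N_k\to1$ for the oscillation-filling step; so in practice the pointwise bilinear ergodic theorem (or a maximal-variational inequality) is the mechanism that carries the argument.
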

Fan \cite{F} proved a similar result for a more general family of sequences, like Davenport's type. Hence, the usual method of calculating the spectrum  for weighted Birkhoff averages, that is, to show that it is equal to the supremum of the entropy of invariant measures, is not applicable. This paper is devoted to present a method, which allows us to calculate the spectrum. Recently, Fan \cite{F2} studied the same question, but with strictly different methods. We will point out the main differences between our and his results.

\section{Results}

In the rest of the paper, we restrict our interest to the full shift space. That is, let $\mathcal{A}=\{1,\ldots,K\}$ be a finite alphabet, and let $\Sigma=\mathcal{A}^\N$. Let us denote the left-shift operator on $\Sigma$ by $\sigma$. Denote $\Sigma_n$ the set of $n$-length finite word. Moreover, denote $\Sigma_*$ the set of all finite prefixes of the infinite words in $\Sigma$. For an $\ii=(i_0,i_1,\ldots)\in\Sigma$ and $m>n\geq0$ let $\ii|_n^m=(i_n,\ldots,i_m)$ be the subword of $\ii$ between the positions $n$ and $m$, and for short denote by $\ii|_n$ the first $n$ element of $\ii$, i.e. $\ii|_n=\ii|_{0}^{n-1}$. For an $\ii\in\Sigma_*$, denote $|\ii|$ the length of $\ii$ and let $[\ii]$ denote the corresponding cylinder set, that is, $[\ii]:=\{\jj\in\Sigma:\jj|_{|\ii|}=\ii\}$. We use $l(\cdot)$ to denote the level of cylinder. Moreover, The space $\Sigma$ is clearly metrisable with metric
\begin{equation}\label{eq:metric}
d(\ii,\jj)=e^{-\min\{n\geq0:i_n\neq j_n\}}.
\end{equation}

In some cases, we extend our interest to a special family of $\sigma$-invariant compact sets. Let $\mathbf{A}$ be a $K\times K$ matrix with entries $0,1$, and we say that the set $\Sigma_{\mathbf{A}}\subseteq\Sigma$ is {\it subshift of finite type} if
$$
\Sigma_{\mathbf{A}}=\{\ii=(i_0,i_1,\ldots)\in\mathcal{A}^\N:\mathbf{A}_{i_k,i_{k+1}}=1\text{ for every }k=0,1,\ldots\}.
$$
We call the matrix $\mathbf{A}$ the adjacency matrix. Let us denote the set of admissible words with length $n$ (i.e. $n$-length subwords of some element in $\Sigma_{\mathbf{A}}$) by $\Sigma_{\mathbf{A},n}$ and denote $\Sigma_{\mathbf{A},*}$ the set of all admissible words. Without loss of generality, we may assume that $\Sigma_{\mathbf{A},1}=\mathcal{A}$. Moreover, we say that $\Sigma_{\mathbf{A}}$ is {\it aperiodic and irreducible} if there exists $r\geq1$ such that every entry of $\mathbf{A}^r$ is strictly positive.

\subsection{Topological entropy}\label{sec:Topological entropy} 

Before we turn to our main results, let us recall here the definition of topological entropy on the shift space. Let $\Sigma=\mathcal{A}^\N$ be the symbolic space. Let $E\subset \Sigma$.
	Define
	$$\mathcal{H}^s_r(E):=\inf_{\alpha} \sum_{C\in \alpha} e^{-sl(C)}$$
	where $\alpha$ is taken over all covers consisting of cylinders of levels large than $r$. Clearly, $\mathcal{H}^s_r(E)$ is increasing as a function of $r$. We define
	$$\mathcal{H}^s(E):=\lim_{r\to \infty}\mathcal{H}^s_r(E)\in [0,+\infty].$$
	
	The \textit{topological entropy} of $E$ is the value where the above limit jumps from $+\infty$ to 0, that is,
	$$
	h_{\rm top}(E):=\inf\{ s\ge 0:\mathcal{H}^s(E)<+\infty \}.
	$$
	An upper bound of $h_{\rm top}(E)$ is given by
	\begin{equation}\label{eq:topentbasic}
		h_{\rm top} (E) \leq \liminf_{n\to\infty} \frac 1n \log \#\{\ii\in\Sigma_n:[\ii]\cap E\neq\emptyset\}.
	\end{equation}
	In fact, the reason is that we can always take a cover with cylinders of level $n$ when estimating $\mathcal{H}_n^s(E)$. If $E$ is a closed $\sigma$-invariant set, then the equality holds (see for example \cite[Theorem 2.6]{KR}). However, the equality does not necessarily hold in general, because there might exist a better cover (in the sense that we could get smaller value of $\sum_{C\in \alpha} e^{-sl(C)}$) than covers consisting of cylinders of level $n$.

	
	To get the lower bound, one has a version of Frostman Lemma as follows.
	\begin{lem}\label{lem:Forstman}
		Let $E\subset \Sigma$. Suppose that there exists a probabilistic measure $\mu$ on $E$ satisfying that there is a constant $c$ such that for every cylinder $C$, we have $\mu(C\cap E) \leq ce^{-sl(C)}$. Then $h_{\rm top} (E) \geq s$.
\end{lem}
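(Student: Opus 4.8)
The plan is to run the classical mass distribution (Frostman) principle, adapted to the cylinder-cover definition of $\mathcal{H}^s$ recalled above: the hypothesis $\mu(C\cap E)\le ce^{-sl(C)}$ is precisely a Frostman condition, and it will force $\mathcal{H}^s(E)$ to be bounded below by a positive constant, from which $h_{\rm top}(E)\ge s$ follows by the ``dimension-like'' monotonicity of $s\mapsto\mathcal{H}^s(E)$.

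First I would fix $r\ge 0$ and an arbitrary cover $\alpha$ of $E$ by cylinders of level at least $r$; since the alphabet $\mathcal{A}$ is finite there are only countably many cylinders, so $\alpha$ may be assumed countable. From $E\subseteq\bigcup_{C\in\alpha}C$ and countable subadditivity of $\mu$ (a Borel probability measure with $\mu(E)=1$),
\[
1=\mu(E)=\mu\Big(\bigcup_{C\in\alpha}(C\cap E)\Big)\le\sum_{C\in\alpha}\mu(C\cap E)\le c\sum_{C\in\alpha}e^{-sl(C)},
\]
so $\sum_{C\in\alpha}e^{-sl(C)}\ge 1/c$. Taking the infimum over all such covers gives $\mathcal{H}^s_r(E)\ge 1/c$ for every $r$, hence $\mathcal{H}^s(E)=\lim_{r\to\infty}\mathcal{H}^s_r(E)\ge 1/c>0$.

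Next I would convert $\mathcal{H}^s(E)>0$ into $h_{\rm top}(E)\ge s$. The elementary observation is that for $s'<s$ and any cover $\alpha$ by cylinders of level at least $r$ one has $e^{-sl(C)}\le e^{-(s-s')r}e^{-s'l(C)}$, hence $\mathcal{H}^s_r(E)\le e^{-(s-s')r}\mathcal{H}^{s'}_r(E)\le e^{-(s-s')r}\mathcal{H}^{s'}(E)$. Now suppose for contradiction that $h_{\rm top}(E)<s$. By the definition $h_{\rm top}(E)=\inf\{t\ge 0:\mathcal{H}^t(E)<+\infty\}$ there is $s'$ with $h_{\rm top}(E)\le s'<s$ and $\mathcal{H}^{s'}(E)<+\infty$; then $\mathcal{H}^s_r(E)\le e^{-(s-s')r}\mathcal{H}^{s'}(E)\to 0$ as $r\to\infty$, so $\mathcal{H}^s(E)=0$, contradicting the previous step. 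Hence $h_{\rm top}(E)\ge s$.

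I do not expect any genuine obstacle here: this is the standard Frostman/mass-distribution principle transported to the symbolic setting. The only things to keep honest are that covers may be taken countable (automatic, the alphabet being finite), that $\mu$ is a Borel probability measure concentrated on $E$ so that countable subadditivity applies to the sets $C\cap E$, and the bookkeeping with $\mathcal{H}^s_r$ versus $\mathcal{H}^s$ in the last step.
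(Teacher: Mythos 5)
Your proof is correct. The paper states this lemma without proof, treating it as a standard fact (the mass-distribution/Frostman principle transported to cylinder covers), and your argument is exactly the expected one: the Frostman hypothesis forces $\sum_{C\in\alpha}e^{-sl(C)}\ge 1/c$ for every admissible cover $\alpha$, hence $\mathcal{H}^s(E)\ge 1/c>0$, and the contradiction step translating this into $h_{\rm top}(E)\ge s$ is handled cleanly via the inequality $\mathcal{H}^s_r(E)\le e^{-(s-s')r}\mathcal{H}^{s'}(E)$. One small stylistic remark: you could shortcut the second half by showing directly that $\mathcal{H}^{s'}(E)=\infty$ for every $s'<s$ (since $\mathcal{H}^{s'}_r(E)\ge e^{(s-s')r}\mathcal{H}^s_r(E)\ge e^{(s-s')r}/c\to\infty$), which yields $h_{\rm top}(E)\ge s$ from the infimum definition without a contradiction argument — but this is a matter of taste, not a gap.
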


\subsection{Continuity of the entropy}

The first aspect of the study is the continuity of the entropy in a more general setting than weighted Birkhoff averages. That is, let $\Sigma_{\mathbf{A}}$ be an aperiodic and irreducible subshift of finite type and let $\phi_i:\Sigma_{\mathbf{A}}\to\R$ be a sequence of continuous potentials. We say that the sequence of potentials $\{\phi_i\}$ are {\em uniformly equicontinuous} if
\begin{equation}\label{def:rho1}
\rho_n^{(1)} := \sup_i {\rm var}_n(\phi_i),
\end{equation}
is finite for every $n$ and converges to 0 as $n$ tends to $\infty$, where
$$
{\rm var}_n(\phi) := \sup_{\ii\in \Sigma_n} \sup_{\jj,\mathbf{k}\in[\ii]} \{ |\phi(\jj)-\phi(\mathbf{k})|\}.
$$
For $\ii\in \Sigma_{\mathbf{A}}$, let
$$
\overline{A}(\ii):= \limsup_{n\to\infty} \frac 1n \sum_{i=0}^{n-1} \phi_i(\sigma^i \ii),
$$
$$
\underline{A}(\ii):= \liminf_{n\to\infty} \frac 1n \sum_{i=0}^{n-1} \phi_i(\sigma^i \ii).
$$
Moreover, if the limit exists let
$$
A(\ii):= \lim_{n\to\infty} \frac 1n \sum_{i=0}^{n-1} \phi_i(\sigma^i \ii).
$$

Given $\alpha\leq\beta\in\R$, let
\[
L_{\mathbf{A}}(\alpha,\beta)=\{\ii\in\Sigma_{\mathbf{A}}: \underline{A}(\ii)=\alpha\text{ and }\overline{A}(\ii)=\beta \}.
\]
For short, let $L_{\mathbf{A}}(\alpha):=L_{\mathbf{A}}(\alpha,\alpha)$. Now we state our first main result. 

\begin{thm}\label{thm:cont}
Let $\Sigma_{\mathbf{A}}\subseteq \Sigma$ be an aperiodic and irreducible subshift of finite type. For every sequence $\phi_i\colon\Sigma_{\mathbf{A}}\mapsto\R$ of uniformly equicontinuous potentials, the function $\alpha\mapsto h_{\rm top}(L_{\mathbf{A}}(\alpha))$ is continuous and concave over its domain, which is a (possibly empty) closed interval.
\end{thm}

In his paper, Fan \cite{F2} gave upper and lower bounds for $h_{\rm top}(L_{\mathbf{A}}(\alpha))$ in case of full shift by using a generalized topological pressure generated by the sequence $\phi_i$. If the pressure is sufficiently smooth then these bounds agree.

It is a natural question how large is the set of irregular points, that is, let
$$
D:=\left\{\ii\in\Sigma_{\mathbf{A}}:\lim_{n\to\infty}\frac{1}{n}\sum_{k=0}^{n-1}\phi_i(\sigma^k\ii)\text{ does not exists }\right\}.
$$

\begin{thm} \label{thm:contgen}
Let $\Sigma_{\mathbf{A}}\subseteq \Sigma$ be an aperiodic and irreducible subshift of finite type. Let $\phi_i\colon\Sigma_{\mathbf{A}}\mapsto\R$ be a sequence of uniformly equicontinuous potentials.
Assume that $A(\ii)$ takes at least two possible values, that is, the domain of the function $\alpha\mapsto h_{\rm top}(L_{\mathbf{A}}(\alpha))$ is a nontrivial interval. Then
\[
h_{\rm top}(D) = h_{\rm top}(\Sigma_{\mathbf{A}}).
\]
\end{thm}

\subsection{Random weights} Let us now extend our symbolic space $\Sigma=\mathcal{A}^\N$. Namely, Let $\Lambda=\{1,\ldots,N\}$ be another finite alphabet, and let $\Omega=\Lambda^\N$ be compact left-shift invariant subsets. Let us define the extended symbolic space $\Gamma:=\Omega\times\Sigma$. As an abuse of notation, we denote the left-shift operator on $\Omega$, and $\Gamma$ by $\sigma$ too. Adapting the notations for $\Omega$ and $\Gamma$, let $\Omega_n$ and $\Gamma_n$ be the set of $n$-length finite words, and denote $\Omega_*$ and $\Gamma_*$ the set of all finite words. The spaces $\Omega,\Sigma$ and $\Gamma$ are clearly metrisable with the same metric defined in \eqref{eq:metric}. For short, denote $\ii\wedge\jj=\min\{n\geq0:i_n\neq j_n\}$.

For an aperiodic and irreducible subshift of finite type $\Sigma_{\mathbf{A}}\subseteq\Sigma$, the set $\Gamma_{\mathbf{A}}=\Omega\times\Sigma_{\mathbf{A}}$ is an aperiodic and irreducible subshift of finite type as well. Denote the set of finite admissible words by $\Gamma_{\mathbf{A},*}$ and that of words of length $n$ by $\Gamma_{\mathbf{A},n}$. Let $f\colon\Gamma_{\mathbf{A}}\mapsto\R^d$ be a continuous potential. For a given sequence $\ww\in\Omega$ and $\alpha\in\R^d$ let
$$
E_\ww(\alpha):=\left\{\ii\in\Sigma_{\mathbf{A}}:\lim_{n\to\infty}\frac{1}{n}\sum_{k=0}^{n-1}f(\sigma^k\ww,\sigma^k\ii)=\alpha\right\}.
$$
Our goal is to determine the topological entropy of $E_\ww(\alpha)$, at least for the case of typical $\ww\in\Omega$. In order to do so, we need to introduce further regularity properties on $f$ and on the choice of $\ww$.

We say that the potential $f\colon\Omega\times\Sigma_{\mathbf{A}}\mapsto\R^d$ has {\it summable variation} if
$$
\sum_{k=0}^\infty\max_{\substack{(\ww,\ii),(\zz,\jj)\in\Gamma_{\mathbf{A},*}:\\(\ww,\ii)\wedge(\zz,\jj)=k}}\|f(\ww,\ii)-f(\zz,\jj)\|<\infty.
$$

Let $\nu$ be a $\sigma$-invariant ergodic measure on $\Omega$. We say that $\nu$ is {\em quasi-Bernoulli} if there exists $C>0$ such that for every $\ww,\zz\in\Omega_*$ with $\ww\zz\in\Omega_*$
$$
C^{-1}\nu([\ww])\nu([\zz])\leq\nu([\ww\zz])\leq C\nu([\ww])\nu([\zz]).
$$
Denote by $\Pi$ the natural projection $\Pi\colon\Omega\times\Sigma\mapsto\Omega$, that is, $\Pi(\ww,\ii)=\ww$. Denote by $\mathcal{E}_\nu(\Gamma)$, $\mathcal{E}_\nu(\Gamma_{\mathbf{A}})$ the set of ergodic $\sigma$-invariant measures on $\Gamma$ and $\Gamma_{\mathbf{A}}$ respectively, whose marginal is $\nu$, i.e., $\Pi_*\mu=\nu$. Denote by $\mathcal{M}_\nu(\Gamma)$ and $\mathcal{M}_\nu(\Gamma_{\mathbf{A}})$ the set of $\sigma$-invariant measures on $\Gamma$ and $\Gamma_{\mathbf{A}}$ with marginal $\nu$. Let
\begin{equation}\label{eq:defpa}
\mathcal{P}_{\mathbf{A}}=\{\alpha\in\R^d:\text{ there exists }\mu\in\mathcal{M}_\nu(\Gamma_{\mathbf{A}})\text{ such that }\int f d\mu=\alpha\}.
\end{equation}
Denote the relative interior of $\mathcal{P}_{\mathbf{A}}$ by $\mathcal{P}_{\mathbf{A}}^o$.

Moreover, let us define the conditional pressure of a potential $f:\Gamma_{\mathbf{A}}\mapsto\R$ by
\begin{equation}\label{eq:condpresdefdef}
P_{\nu}(f)=\lim_{n\to\infty}\frac{1}{n}\int\log\sum_{\ii\in\Sigma_n}\sup_{\jj\in[\ii]}e^{S_nf(\ww,\jj)}d\nu(\ww),
\end{equation}
where $S_nf=f+f\circ\sigma+\cdots+f\circ\sigma^{n-1}$ and $\log$ is taken in the base $e$. Throughout the paper, we will use the convention that $0\cdot\log0=0$. Moreover, we note that we define the supremum over an empty set as $-\infty$ and the topological entropy of an empty set as $-\infty$.
Now, we can formalise our second theorem.

\begin{thm}\label{thm:typmain} Let $\Sigma_{\mathbf{A}}\subseteq\Sigma$ be an aperiodic and irreducible subshift of finite type, and let $\nu$ be a quasi-Bernoulli $\sigma$-invariant ergodic measure on $\Omega$. Moreover, let $f\colon\Omega\times\Sigma_{\mathbf{A}}\mapsto\R^d$ be a continuous map with summable variation. Then for every $\alpha\in\mathcal{P}^o_{\mathbf{A}}$ and for $\nu$-almost every $\ww\in\Omega$,
	\[
	\begin{split}
	h_{\rm top}(E_{\ww}(\alpha))&=\sup\{h_\mu:\mu\in\mathcal{E}_\nu(\Gamma_{\mathbf{A}})\text{ and }\int fd\mu=\alpha\}-h_\nu\\
	&=\sup\{h_\mu:\mu\in\mathcal{M}_\nu(\Gamma_{\mathbf{A}})\text{ and }\int f d\mu=\alpha\}-h_\nu\\
	&=\inf_{\p\in\R^d}P_\nu(\langle\p,f-\alpha\rangle).
	\end{split}
	\]	
	Furthermore, there exists $\alpha_0\in\R^d$ such that for $\nu$-almost every $\ww$,
	\begin{equation}\label{eq:max}
	h_{\rm top}(E_\ww(\alpha_0))=h_{\rm top}(\Sigma_{\mathbf{A}}).
	\end{equation}
\end{thm}

Combining Theorem~\ref{thm:cont} and Theorem~\ref{thm:typmain} we get the following stronger result for real valued potentials, which shows that for a typical sequence of weights it is possible to calculate the whole spectrum.

\begin{thm}\label{cor:main}
Let $\Sigma_{\mathbf{A}}\subseteq\Sigma$ be an aperiodic and irreducible subshift of finite type, and let $\nu$ be a quasi-Bernoulli $\sigma$-invariant ergodic measure on $\Omega$. Moreover, let $f\colon\Omega\times\Sigma_{\mathbf{A}}\mapsto\R$ be a continuous map with summable variation. Then for $\nu$-almost every $\ww\in\Omega$,
\[
\begin{split}
h_{\rm top}(E_{\ww}(\alpha))&=\sup\{h_\mu:\mu\in\mathcal{E}_\nu(\Gamma_{\mathbf{A}})\text{ and }\int f d\mu=\alpha\}-h_\nu\\
&=\sup\{h_\mu:\mu\in\mathcal{M}_\nu(\Gamma_{\mathbf{A}})\text{ and }\int f d\mu=\alpha\}-h_\nu\\
&=\inf_{p\in\R}\left(P_\nu(p\cdot f)-\alpha\cdot p\right)\text{ for every $\alpha\in\R$}.
\end{split}
\]
Moreover, for $\nu$-almost every $\ww$, the map $\alpha\mapsto h_{\rm top}(E_\ww(\alpha))$ is continuous and concave over its domain $\mathcal{P}_{\mathbf{A}}$.
\end{thm}

Fan~\cite{F2} proved some similar results. Namely, he showed a version of Theorem~\ref{cor:main} for full shifts with the choice $f(\sigma^k\ww,\sigma^k\ii)=w_k\varphi(\ii)$, where $(w_k)_k$ is an ergodic sequence of real random variables or deduced from a uniquely ergodic dynamical system, and $\varphi$ depends only on a finite number of coordinates. In this cases, he shows analyticity of the conditional topological pressure, while our result only gives continuity.



\subsection{Potentials depending on the first coordinate} Let us assume that $f\colon\Omega\times\Sigma\mapsto\R$ depends only on the first symbol, that is, $f(\ww,\ii)=f_{w_0,i_0}$. Then for a $\ww\in\Omega$,
$$
E_\ww(\alpha):=\left\{\ii\in\Sigma:\lim_{n\to\infty}\frac{1}{n}\sum_{k=0}^{n-1}f_{w_k,i_k}=\alpha\right\}.
$$

Let $\underline{q}=(q_1,\ldots,q_N)\in\mathcal{S}_N$ be a probability vector, where $\mathcal{S}_N$ denotes the $(N-1)$-dimensional simplex. We say that $\ww\in\Omega$ is {\it $\underline{q}$-frequency regular}, if
\begin{equation}\label{eq:freq}
\lim_{n\to\infty}\frac{\#\{k\in[0,n]\cap\mathbb{Z}:\omega_k=i\}}{n}=q_i\text{ for every }i=1,\ldots,N.
\end{equation}

Notice that there is a bijection between $\mathcal{S}_N$ and the probability Bernoulli measure on $\Omega$.
In this case, we choose $\nu=\nu_{\underline{q}}$ to be the Bernoulli measure on $\Omega$. Then for the potential $f\colon\Gamma\mapsto\R$, the conditional pressure has the form
\begin{equation}\label{eq:simplepres}
P_{\underline{q}}(\langle\p,f-\alpha\rangle)=\sum_{j=1}^Nq_j\log\sum_{i=1}^Ke^{\langle\p,f_{j,i}-\alpha\rangle}.
\end{equation}

Denote by $\mathcal{B}_{\underline{q}}(\Gamma)$ the set of all Bernoulli measures on $\Gamma$ with marginal $\nu$. That is, let $(p_{j,i})_{j=1,i=1}^{N,K}\in\mathcal{B}_{\underline{q}}(\Gamma)\subset\mathcal{S}_{NK}$ such that $\sum_{i=1}^Kp_{j,i}=q_j$. Our third main result is as follows.

\begin{thm}\label{thm:goal} Let $\ww\in\{1,\ldots,N\}^\N$ be a $\underline{q}$-frequency regular sequence with frequencies $(q_1,\ldots,q_N)$. Then for every $\alpha\in\R$.
\[
\begin{split}
h_{\rm top}(E_\ww(\alpha))&=\sup_{(p_{j,i})\in\mathcal{B}_{\underline{q}}(\Gamma)}\left\{-\sum_{i,j}p_{j,i}\log p_{ji}:\sum_{i,j}p_{j,i}f_{j,i}=\alpha\right\}+\sum_{i=1}^Nq_i\log q_i\\
&=\inf_{p\in\R}\left\{P_{\underline{q}}(p\cdot f)-p\alpha\right\}.
\end{split}
\]
\end{thm}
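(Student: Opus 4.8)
The plan is to sandwich $h_{\rm top}(E_\ww(\alpha))$ between $\inf_{p\in\R}\bigl(P_{\underline{q}}(p\cdot f)-p\alpha\bigr)$ from above and $\sup\{-\sum_{i,j}p_{j,i}\log p_{j,i}:(p_{j,i})\in\mathcal{B}_{\underline{q}}(\Gamma),\ \sum_{i,j}p_{j,i}f_{j,i}=\alpha\}+\sum_{j=1}^N q_j\log q_j$ from below, and then to identify the two right-hand sides by convex duality. Throughout, $h_{\rm top}$ of the non-compact, non-invariant set $E_\ww(\alpha)$ is the Bowen--Pesin (Carath\'eodory) entropy computed with covers by cylinders, and the structural fact that makes everything explicit is that, since $f$ depends only on the first coordinate, $S_nf(\ww,\ii)=\sum_{k=0}^{n-1}f_{w_k,i_k}$ depends on $\ii$ only through $\ii|_n$ and the associated partition functions become products governed by the digit frequencies of $\ww$.

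\emph{Upper bound.} Fix $p\in\R$ and set $N_j(n)=\#\{k<n:\omega_k=j\}$. Then $\sum_{\mathbf{v}\in\Sigma_n}e^{pS_nf(\ww,\mathbf{v})}=\prod_{k=0}^{n-1}(\sum_{i=1}^K e^{pf_{w_k,i}})=\prod_{j=1}^N(\sum_{i=1}^K e^{pf_{j,i}})^{N_j(n)}$, so by $\underline{q}$-frequency regularity $\frac1n\log\sum_{\mathbf{v}\in\Sigma_n}e^{pS_nf(\ww,\mathbf{v})}\to P_{\underline{q}}(p\cdot f)$. Write $E_\ww(\alpha)=\bigcup_M E^M$ with $E^M=\{\ii:|S_nf(\ww,\ii)-n\alpha|\le\varepsilon n\text{ for all }n\ge M\}$. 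For $n\ge M$, on each level-$n$ cylinder meeting $E^M$ one has $e^{pS_nf(\ww,\cdot)}\ge e^{pn\alpha-|p|\varepsilon n}$, so the number of such cylinders is at most $e^{-pn\alpha+|p|\varepsilon n}\sum_{\mathbf{v}\in\Sigma_n}e^{pS_nf(\ww,\mathbf{v})}\le e^{n(P_{\underline{q}}(p\cdot f)-p\alpha+|p|\varepsilon+o(1))}$; the standard Bowen covering estimate gives $h_{\rm top}(E^M)\le P_{\underline{q}}(p\cdot f)-p\alpha+|p|\varepsilon$ for every $M$, whence by countable stability $h_{\rm top}(E_\ww(\alpha))\le P_{\underline{q}}(p\cdot f)-p\alpha+|p|\varepsilon$. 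Letting $\varepsilon\downarrow0$ and taking the infimum over $p$ yields the upper bound (and, when that infimum is $-\infty$, that $E_\ww(\alpha)=\emptyset$).

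\emph{Lower bound.} Given $(p_{j,i})\in\mathcal{B}_{\underline{q}}(\Gamma)$ with $\sum_{i,j}p_{j,i}f_{j,i}=\alpha$, set $r_{j,i}=p_{j,i}/q_j$ when $q_j>0$ (and let $r_{j,\cdot}$ be any fully supported vector when $q_j=0$), and let $\mu$ be the non-stationary Bernoulli measure on $\Sigma$ with $\mu([\mathbf{v}])=\prod_{k<n}r_{w_k,v_k}$ for $\mathbf{v}\in\Sigma_n$. Put
\[
F=\Bigl\{\ii\in\Sigma:\ \lim_{n\to\infty}\frac{\#\{k<n:\omega_k=j,\ i_k=l\}}{N_j(n)}=r_{j,l}\ \text{for all }j\text{ with }q_j>0\text{ and all }l\Bigr\}.
\]
Under $\mu$ the coordinates $(i_k)_{k:\,\omega_k=j}$ are i.i.d.\ with law $r_{j,\cdot}$ and $N_j(n)\to\infty$, so the strong law of large numbers gives $\mu(F)=1$; for $\ii\in F$, using $N_j(n)/n\to q_j$ one gets $\frac1nS_nf(\ww,\ii)\to\sum_{j,l}q_jr_{j,l}f_{j,l}=\alpha$ (so $F\subseteq E_\ww(\alpha)$) and $-\frac1n\log\mu([\ii|_n])\to-\sum_j q_j\sum_l r_{j,l}\log r_{j,l}=:s$. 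By Egorov's theorem these limits are uniform off a set of arbitrarily small $\mu$-measure, so the mass distribution principle for the Bowen entropy gives $h_{\rm top}(E_\ww(\alpha))\ge h_{\rm top}(F)\ge s=-\sum_{i,j}p_{j,i}\log p_{j,i}+\sum_j q_j\log q_j$; taking the supremum over admissible $(p_{j,i})$ completes this direction.

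\emph{Duality, boundary cases, and the main obstacle.} It remains to check that $\sup\{-\sum_{i,j}p_{j,i}\log(p_{j,i}/q_j)\}=\inf_p(P_{\underline{q}}(p\cdot f)-p\alpha)$, which is pure convex analysis: the Gibbs inequality applied in each block $j$, comparing $(p_{j,i}/q_j)_i$ with $(e^{pf_{j,i}}/\sum_l e^{pf_{j,l}})_i$ and using the constraint, gives ``$\le$'' for every $p$; for ``$\ge$'', if the minimum of $P_{\underline{q}}(p\cdot f)-p\alpha$ is attained at some $p^*$ (equivalently $\frac{d}{dp}P_{\underline{q}}(p\cdot f)|_{p^*}=\alpha$) then the Gibbs vector $p_{j,i}=q_j e^{p^* f_{j,i}}/\sum_l e^{p^* f_{j,l}}$ is admissible and attains the value, while if $\alpha$ is an endpoint of the range of $\frac{d}{dp}P_{\underline{q}}(p\cdot f)$ one passes to the limit $p^*\to\pm\infty$, and if $\alpha$ lies outside the closed range both sides equal $-\infty$ and $E_\ww(\alpha)=\emptyset$ since frequency regularity forces $\sum_j q_j\min_i f_{j,i}\le\liminf_n\frac1nS_nf(\ww,\ii)\le\limsup_n\frac1nS_nf(\ww,\ii)\le\sum_j q_j\max_i f_{j,i}$. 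No single step is deep; the real point is recognising that $\underline{q}$-frequency regularity is exactly the hypothesis needed once $f$ depends only on the first coordinate, which removes the bounded-variation difficulties present in Theorem~\ref{thm:typmain}. The most delicate part is the lower bound: one must run the mass distribution principle for the Carath\'eodory notion of topological entropy on a set that is neither closed nor shift-invariant, which forces the Egorov-type uniformisation of the two almost-sure limits above, together with care that the degenerate cases ($q_j=0$, vanishing $r_{j,i}$, or $\alpha$ on the boundary) do not break the construction of $\mu$ or of $F$.
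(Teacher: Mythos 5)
Your proof is correct, but it takes a genuinely different route from the paper's. The paper deduces Theorem~\ref{thm:goal} from the much heavier Theorem~\ref{thm:typmain}: it first establishes the formula for $\nu$-almost every $\ww$ (where $\nu$ is the $\underline{q}$-Bernoulli measure) via the conditional thermodynamic formalism of Section~4 (conditional pressures, conditional equilibrium states built in Theorem~\ref{thm:equillibrium}, differentiability of the pressure, Pinsker's formula), identifies the equilibrium state as a genuine Bernoulli measure on $\Gamma$, and then transfers from ``$\nu$-almost every $\ww$'' to ``every $\underline{q}$-frequency regular $\ww$'' through the H\"older conjugacies $G_{\ww,\ww'}$ of Lemmas~\ref{lem:hold}--\ref{lem:equal}, which show $h_{\rm top}(E_\ww(\alpha))$ depends on $\ww$ only through its digit frequencies. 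You instead work directly with an arbitrary fixed frequency regular $\ww$: the upper bound is a bare-hands Chebyshev/covering count using the exact product factorisation of $\sum_{\mathbf{v}}e^{pS_nf(\ww,\mathbf{v})}$, and the lower bound constructs a non-stationary Bernoulli measure $\mu$ on $\Sigma$ with marginals $r_{w_k,\cdot}$, uses the SLLN and an Egorov argument, and then invokes the Frostman-type Lemma~\ref{lem:Forstman}; the two dual formulas are then matched by finite-dimensional convex duality. What each approach buys: yours is shorter, self-contained, and more elementary, needing none of the relative variational principle machinery, but it exploits in an essential way that $f$ depends only on the first coordinate (so that all partition functions factor exactly and the optimal measure is an explicit non-stationary product). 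The paper's route is heavier but the intermediate Theorem~\ref{thm:typmain} covers bounded-variation potentials and general quasi-Bernoulli $\nu$, so they get Theorem~\ref{thm:goal} essentially for free once that machinery is in place. One point in your favour: Theorem~\ref{thm:typmain}, as stated and proved, only covers $\alpha$ in the relative interior $\mathcal{P}^o_A$, whereas the statement of Theorem~\ref{thm:goal} is ``for every $\alpha\in\R$''; the paper's proof is silent about boundary and exterior $\alpha$, while you address these cases explicitly (limiting Gibbs vectors for boundary $\alpha$, and the a priori bounds $\sum_j q_j\min_i f_{j,i}\le\liminf\frac1nS_nf\le\limsup\frac1nS_nf\le\sum_j q_j\max_i f_{j,i}$ forcing $E_\ww(\alpha)=\emptyset$ outside the range), which is a genuine improvement in rigour for this theorem.
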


Comparing Theorem~\ref{thm:typmain} with Theorem~\ref{cor:main}, in the general setup of Theorem~\ref{thm:typmain} we are only able to show that for any possible value of $\alpha$ one can find a full measure set $\Omega_\alpha$, which might depend on $\alpha$, while in Theorem~\ref{cor:main} in the one dimensional case, we manage to show that there exists a universal full measure set $\Omega$, for which any $\ww\in\Omega$ the spectrum $\alpha\mapsto h_{\rm top}(E_\ww(\alpha))$ can be determined. Comparing Theorem \ref{thm:goal} with Theorem \ref{cor:main}, we can construct a Bernoulli measure with probabilities $\underline{q}=(q_i)_i$, for which the $\underline{q}$-frequency regular sequences will be a set of full measure. That is, we can explicitly construct the set $\Omega$ of full measure for which the spectrum can be determined. In this sense Theorem \ref{thm:goal} is a strengthening of Theorem \ref{cor:main} for this particular class of systems.


Fan \cite{F2} also gave a similar result in his recent preprint. Namely, Fan shows Theorem~\ref{thm:goal} under a weaker condition that $\varphi$ depends on finitely many coordinates but under the stronger assumption that it takes only values ${-1,1}$.

Now we state the corresponding version of Theorem~\ref{thm:contgen} for the frequency regular case. Similarly, let
$$
D_\ww=\left\{\ii\in\Sigma:\lim_{n\to\infty}\frac{1}{n}\sum_{k=0}^{n-1}f_{w_k,i_k}\text{ does not exists}\right\}.
$$

\begin{thm}\label{thm:irreg2} Let $\ww\in\{1,\ldots,N\}^\N$ be a $\underline{q}$-frequency regular sequence with frequencies $(q_1,\ldots,q_N)$. Suppose that $g_i=\sum_{j=1}^Nq_jf_{j,i}$ is not constant as function of $i$. Then
	$$
	h_{\rm top}(D_\ww)=\log K.
	$$
\end{thm}

\subsection{Examples: weighted Birkhoff averages with frequency regular weights} Now, we show examples and demonstrate our result on the spectrum of real valued potentials depending on the first coordinate and frequency regular weights. Here we assume again that our potentials are supported on the whole spaces $\Sigma=\mathcal{A}^\N$, $\Omega=\Lambda^\N$ and the potential $\varphi\colon\Sigma\mapsto\R$ and the weight $\lambda\colon\Omega\mapsto\R$ depend only on the first symbol, that is, $\varphi(\ii)=\varphi_{i_0}$ and $\lambda(\ww)=\lambda_{w_0}$. Then for a $\ww\in\Omega$, let
$$
E_\ww(\alpha):=\left\{\ii\in\Sigma:\lim_{n\to\infty}\frac{1}{n}\sum_{k=0}^{n-1}\lambda_{w_k}\varphi_{i_k}=\alpha\right\}.
$$

Let $\underline{q}=(q_1,\ldots,q_N)\in\mathcal{S}_N$ be a probability vector, and let $\ww\in\Omega$ be an arbitrary $\underline{q}$-frequency regular sequence. Denote by $\varphi_{\max}=\max\{\varphi_i: 1\le i\le K \}$ and $\varphi_{\min}=\min\{\varphi_i: 1\le i\le K \}$. To avoid the trivial case, we assume $\varphi_{\max}\not=\varphi_{\min} $. Finally, let
\begin{equation}\label{eq:domain}
I=\left[\varphi_{\min}\sum_{\lambda_{j}>0}q_j\lambda_{j}+\varphi_{\max}\sum_{\lambda_{j}<0}q_j\lambda_{j}, \varphi_{\max}\sum_{\lambda_{j}>0}q_j\lambda_{j}+\varphi_{\min}\sum_{\lambda_{j}<0}q_j\lambda_{j}\right].
\end{equation}
Now we show a compatible form of $h_{\rm top}(E_\ww(\alpha))$ in order to compute some examples.

\begin{ex}\label{thm: n=1}
	Let $\ww\in\{1,\ldots,N\}^\N$ be a $\underline{q}$-frequency regular sequence with frequencies $(q_1,\ldots,q_N)$. Then for every $\alpha\in I$
	\[
	h_{\rm top}(E_\ww(\alpha))=
	\sum_{j=1}^Nq_j\log\sum_{i=1}^Ke^{p(\lambda_{j}\varphi_i-\alpha)},
	\]
	where $p$ is the unique solution of the equation
	\begin{equation}
	\sum_{j=1}^{N} q_j\lambda_j \frac{\sum_{i=1}^{N} \varphi_ie^{p\lambda_j \varphi_i} }{\sum_{i=1}^{N}e^{p\lambda_j \varphi_i}}=\alpha.
	\end{equation}
	Moreover, if $\alpha\notin I,$ $\inf_{p}P_\nu(f_{p})=-\infty$, that is, there is no $\p^*\in\R^d$ such that $\inf_{p}P_\nu(f_{p})=P_\nu(f_{p^*})$.
\end{ex}

\begin{proof}
	Let $\alpha\in I$.	For sake of simplicity, denote
	$P(p)=P_{\underline{q}}(p(\lambda\varphi-\alpha))$. It is easy to check by \eqref{eq:simplepres} that
	$$
	P(p)=\sum_{j=1}^{N}q_j\log \sum_{i=1}^{N}e^{p\lambda_j \varphi_i}-p\alpha.
	$$
	It follows that
	$$
	P'(p)=\sum_{j=1}^{N}q_j \lambda_j \frac{\sum_{i=1}^{N} \varphi_ie^{p\lambda_j \varphi_i} }{\sum_{i=1}^{N}e^{p\lambda_j \varphi_i}}-\alpha,
	$$
	and
	$$
	P''(p)=\sum_{j=1}^{N}q_j \lambda_j^2 \frac{(\sum_{i=1}^{N} \varphi_i^2e^{p\lambda_j \varphi_i})(\sum_{i=1}^{N}e^{p\lambda_j \varphi_i})-(\sum_{i=1}^{N} \varphi_ie^{p\lambda_j \varphi_i})^2 }{(\sum_{i=1}^{N}e^{p\lambda_j \varphi_i})^2}.
	$$
	Since $\varphi_{\max}\not=\varphi_{\min}$, by Cauchy-Schwarz inequality, we see that $P''(p)>0$ for all $p\in \R$. A simple computation shows that
	$$
	P'(-\infty)=\varphi_{\min}\sum_{\lambda_{j}>0}q_j\lambda_{j}+\varphi_{\max}\sum_{\lambda_{j}<0}q_j\lambda_{j}-\alpha<0,
	$$
	and
	$$P'(+\infty)=\varphi_{\max}\sum_{\lambda_{j}>0}q_j\lambda_{j}+\varphi_{\min}\sum_{\lambda_{j}<0}q_j\lambda_{j}-\alpha>0.
	$$
	Thus $P'(p)=0$ has a unique solution at which $P$ achieves minima.
	
	Now let $\alpha\notin I$. It is easy to calculate that
	$$
	P(-\infty)=\lim\limits_{p\to -\infty} p(\varphi_{\min}\sum_{\lambda_{j}>0}q_j\lambda_{j}+\varphi_{\max}\sum_{\lambda_{j}<0}q_j\lambda_{j}-\alpha),
	$$
	and
	$$
	P(+\infty)=\lim\limits_{p\to -\infty} p(\varphi_{\max}\sum_{\lambda_{j}>0}q_j\lambda_{j}+\varphi_{\min}\sum_{\lambda_{j}<0}q_j\lambda_{j})-\alpha).
	$$
	Thus $\inf_{p}P(p)=-\infty$.
\end{proof}

\begin{ex}
Let us consider again the M\"obius sequence with the potential $\varphi(\ii)=i_0$ for $\ii\in\Sigma=\{0,\ldots,N-1\}^\N$. The M\"obius function is frequency regular with
\[
\begin{split}
\lim_{n\to\infty}\large\frac{\#\{0\leq i\leq n-1:\boldsymbol{\mu}(i)=\pm1\}}{n}&=\frac{3}{\pi^2}\text{ and }\\
\lim_{n\to\infty}\frac{\#\{0\leq i\leq n-1:\boldsymbol{\mu}(i)=0\}}{n}&=1-\frac{6}{\pi^2},
\end{split}
\]
see for example \cite{CS}. As a special case of Example~\ref{thm: n=1} for $\varphi\colon\{0,\ldots,N-1\}^\N\mapsto\R$ with $\varphi(\ii)=i_0$, we get
$$
h_{\rm top}(E_{\boldsymbol{\mu}}(\alpha))=\left(1-\frac{6}{\pi^2}\right)\log(N)+\frac{6}{\pi^2}\log\left(\frac{e^{pN}-1}{e^p-1}\right)-\left((N-1)\frac{3}{\pi^2}+\alpha\right)p,
$$
where $H(\underline{p})=-\sum_ip_i\log p_i.$ and $p$ is the unique solution of
$$
\frac{(e^{(N+1)p}-1)(N-1)-(N+1)(e^{N p}-e^p)}{(e^{Np}-1)(e^p-1)}=\frac{\pi^2\alpha}{3},\text{ for }\alpha\in\left[\frac{-(N-1)3}{\pi^2},\frac{(N-1)3}{\pi^2}\right].
$$
\end{ex}

A corollary of the above results is that non-degenerate weights and potentials give us non-degenerate weighted spectrum.

\begin{cor}
	Let $\ww\in\Omega$ be a frequency regular sequence with frequencies $(q_1,\ldots,q_N)$ with non-degenerate weights, i.e. $\sum_{j=1}^Nq_j|\lambda_j|>0$. Let $\varphi\colon\Sigma\mapsto\R$ be a potential depending only on the first coordinate. Then there exists $\alpha_0\in I$ such that $h_{\rm top}(E_\ww(\alpha_0))=\log K$. Moreover, the domain $I$ is a non-degenerate closed interval unless the potential $\varphi(\ii)=\varphi_{i_0}$ is constant. In particular, either the limit of the weighted Birkhoff average at every point exists and equals $\alpha_0$ or the set of points at which the limit of the weighted Birkhoff average does not exist has full topological entropy.
\end{cor}

\begin{proof}
	The first assertion follows by Theorem~\ref{thm:goal} for $f_{j,i}=\lambda_j\varphi_i$  with the choice $p_{j,i}=\frac{q_j}{K}$ and $\alpha_0=\left(\sum_{i=1}^K\frac{\varphi_i}{K}\right)\left(\sum_{j=1}^Nq_j\lambda_j\right)$. Moreover, \eqref{eq:domain}, Theorem~\ref{thm: n=1} and the continuity of the spectrum give the second claim by some algebraic manipulation. The proof can be finished by applying Theorem~\ref{thm:irreg2}.
\end{proof}

The difference between the usual Birkhoff averages and weighted Birkhoff averages is shown by the following example:

\begin{ex} \label{ex:alter}
On the full shift system $\Sigma=\{0,1\}^\N$ there exist a potential $\varphi\colon\Sigma\mapsto\R$ depending only on the first symbol and a bounded sequence of weights $\ww=(w_i)_i$ (which is not frequency regular) such that
\begin{itemize}
\item[--] The $\alpha=0$ is the only possible value of the limit of a weighted Birkhoff average,
\item[--] $0<h_{\rm top} (E_\ww(0)) < \log 2$.
At all the points in $\Sigma\setminus E_\ww(0)$ the limit of the weighted Birkhoff average does not exist.
\end{itemize}
\end{ex}

In particular, to have non-degenerate weighted spectrum, the frequency regularity of the weights is somewhat necessary. The proof of the example will be given in the last section.

\subsection*{Structure of paper.} In Section \ref{sec:Topological entropy}, we recall the definition and basic properties of topological entropy. In Section \ref{sec:Continuity and concavity of the spectrum}, we prove Theorem \ref{thm:cont} and Theorem \ref{thm:contgen}. In Section \ref{sec:Typical weights}, we prove Theorem \ref{thm:typmain} and Theorem \ref{cor:main}. In Section \ref{sec:Frequency regular sequences}, as an application of Theorem \ref{thm:cont} and Theorem \ref{thm:typmain}, we show Theorem \ref{thm:goal} and Theorem \ref{thm:irreg2}. We remark that the proof of Theorem \ref{thm:cont} and that of Theorem \ref{thm:typmain} are independent. Thus the readers who are interested in Theorem \ref{thm:typmain} may read directly Section \ref{sec:Typical weights}.

\section{Continuity and concavity of the spectrum}\label{sec:Continuity and concavity of the spectrum}

Let us recall the conditions and notations of Theorem~\ref{thm:cont}. That is, we assume that $\Sigma_{\mathbf{A}}\subseteq\Sigma=\mathcal{A}^\N$ is an aperiodic and irreducible subshift of finite type. Moreover, let $\phi_i:\Sigma_{\mathbf{A}}\to\R$ be a sequence of uniformly equicontinuous potentials.
For $\ii\in \Sigma_{\mathbf{A}}$, let
$$
\overline{A}(\ii):= \limsup_{n\to\infty} \frac 1n \sum_{i=0}^{n-1} \phi_i(\sigma^i \ii)\text{ and }\underline{A}(\ii):= \liminf_{n\to\infty} \frac 1n \sum_{i=0}^{n-1} \phi_i(\sigma^i \ii).
$$

Given $\alpha\leq\beta\in\R$, let
\[
L_{\mathbf{A}}(\alpha,\beta)=\{\ii\in\Sigma: \underline{A}(\ii)=\alpha\text{ and }\overline{A}(\ii)=\beta \}.
\]
For short, let $L_{\mathbf{A}}(\alpha):=L_{\mathbf{A}}(\alpha,\alpha)$. Define
\[
B_m^n(\ii) :=  \sum_{i=m}^{n-1} \phi_i(\sigma^i \ii)
\]
and $A_m^n(\ii)=\frac{1}{n-m}B_m^n(\ii)$. Let
\[
\rho_n^{(2)} := \sup_{\ii\in \Sigma_{\mathbf{A},n}} \sup_{\jj,\mathbf{k}\in[\ii]} |A_0^n(\jj)-A_0^n(\mathbf{k})|,
\]
for $m,n\in \N$ with $n>m$.
It is clear that
\begin{equation}
\rho_n^{(2)}\leq \frac 1n \sum_{i=1}^n \rho_i^{(1)}.
\end{equation}
Since $\rho_n^{(1)}$ converges to $0$ as $n$ tends to $\infty$, so does $\rho_n^{(2)}$.

\begin{lem}\label{lem:m n k}
	Let $\varepsilon>0$ and $N\in \N$. Suppose that $|A_0^n(\ii)-\alpha|<\varepsilon$ for all $n>N$.
	Then for $m,n > N$ we have	
	\[
	|A_m^n(\ii)-\alpha| \leq \varepsilon\frac{n+m}{n-m}.
	\]
\end{lem}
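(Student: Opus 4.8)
The proof is a short computation resting on the additivity of the sums $B_m^n$, so I will just lay out the mechanism. The key identity is the telescoping relation
\[
B_m^n(\ii)=B_0^n(\ii)-B_0^m(\ii),
\]
which is immediate from the definition $B_m^n(\ii)=\sum_{i=m}^{n-1}\phi_i(\sigma^i\ii)$. Since $A_m^n(\ii)=\frac{1}{n-m}B_m^n(\ii)$ is only defined for $n>m$, we have $n-m>0$, and dividing the identity by $n-m$ gives
\[
(n-m)A_m^n(\ii)=nA_0^n(\ii)-mA_0^m(\ii).
\]

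Next I would subtract $(n-m)\alpha=n\alpha-m\alpha$ from both sides and regroup, obtaining
\[
(n-m)\bigl(A_m^n(\ii)-\alpha\bigr)=n\bigl(A_0^n(\ii)-\alpha\bigr)-m\bigl(A_0^m(\ii)-\alpha\bigr).
\]
Because both $m>N$ and $n>N$, the hypothesis $|A_0^k(\ii)-\alpha|<\varepsilon$ for all $k>N$ applies at $k=m$ and $k=n$. Applying the triangle inequality to the right-hand side therefore yields
\[
(n-m)\,|A_m^n(\ii)-\alpha|\le n\,|A_0^n(\ii)-\alpha|+m\,|A_0^m(\ii)-\alpha|<(n+m)\varepsilon,
\]
and dividing by $n-m>0$ gives the claimed bound $|A_m^n(\ii)-\alpha|\le\varepsilon\frac{n+m}{n-m}$.

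There is no genuine obstacle in this lemma: once the additive identity for $B_m^n$ is written down, the estimate is forced by the triangle inequality. The only points needing a moment's care are that $A_m^n$ is meaningful only when $n>m$ (so the division by $n-m$ is legitimate) and that the hypothesis is invoked solely at indices strictly larger than $N$, which is precisely the range for which it was assumed.
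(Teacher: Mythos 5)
Your proof is correct and uses exactly the same identity $(n-m)A_m^n(\ii) = nA_0^n(\ii) - mA_0^m(\ii)$ that the paper relies on; you have simply spelled out the triangle-inequality step that the paper leaves implicit.
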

\begin{proof}
	The statement follows simply from $(n-m) A_m^n(\ii) = n A_0^n(\ii) - mA_0^m(\ii).$
\end{proof}

We remind that for an aperiodic and irreducible subshift of finite type $\Sigma_{\mathbf{A}}$ there exists a constant $r$ such that for any two admissible words $\ii, \jj\in\Sigma_{\mathbf{A},*}$ there exists a word $\mathbf{k}$ of length $r$ such that the concatenation $\ii\mathbf{k}\jj$ is admissible, moreover one can choose $\mathbf{k}$ depending only on the last symbol of $\ii$ and the first symbol of $\jj$. We fix $r$ for the rest of the section.

We will need the following technical lemma. Note that although the sequence $\phi_i$ is defined only on $\Sigma_{\mathbf{A}}\subseteq\Sigma$, it can be naturally extended to $\Sigma$ in such a way that the sequence remains uniformly equicontinuous. For instance, for every $\ii\in\Sigma$ let $n(\ii)=\inf\{n\geq0:\ii|_0^n\in\Sigma_{\mathbf{A},*}\}$, that is, $\ii|_0^{n(\ii)}$ is the longest admissible prefix of $\ii$ and let $\phi_i(\ii):=\max_{\jj\in[\ii|_0^{n(\ii)}]}\phi_i(\jj)$. 
We consider a map $\pi$ in the following lemma, which illustrates that the concatenation of a sequence of admissible words can be changed into an admissible infinite sequence without changing the weighted Birkhoff average.

\begin{lem} \label{lem:techn}
Let $(q_j)_{j=1}^\infty$ be an increasing sequence of integers satisfying $q_j/j\to\infty$ and $q_{j+1}-q_j> 2r$. Let $\pi\colon\Sigma\to\Sigma$ be a map satisfying the following properties for every $n\in\N$:
\begin{itemize}
\item[i)] if $\ii|_0^{n}=\jj|_0^{n}$ then $(\pi\ii)|_{0}^{q_j}=(\pi\jj)|_0^{q_j}$ for $j$ such that $q_j<n\leq q_{j+1}$, 
\item[ii)] if $\ii|_n^n\neq (\pi \ii)|_n^n$ then $n\in \{q_j+1,\ldots, q_j+r\}$ for some $j$.
\end{itemize}
Then there exists a sequence $\rho^{(3)}_n\searrow 0$ such that for every $\ii\in \Sigma$ and for every $n$
\[
|A_0^n(\pi\ii)-A_0^n(\ii)| < \rho^{(3)}_n.
\]
Moreover, for every $X\subset \Sigma$
\[
h_{\rm top}(\pi(X))= h_{\rm top}(X).
\]
\end{lem}
\begin{proof}
	Taking $j$ such that $q_j<n\leq q_{j+1}$ we get
\[
\begin{split}
|A_0^{n}(\pi \ii) - A_0^{n}(\ii)|& \leq \frac{(j+1)r}{n} \max_{i\geq0,\ii\in\mathcal{A}^\N}|\phi_i(\ii)| + \frac{1}{n}\sum_{i=1}^j \sum_{\ell=0}^{q_{i}-q_{i-1}-r}\rho^{(1)}_{\ell}+\frac{1}{n}\sum_{\ell=\max\{q_{j+1}-q_j-r-n, 0\}}^{q_{j+1}-q_j-r}\rho_\ell^{(1)},\\
&\leq\frac{(j+1)r}{n} \max_{i\geq0,\ii\in\mathcal{A}^\N}|\phi_i(\ii)| + \frac{1}{n}\sum_{i=1}^j(q_{i}-q_{i-1}-r)\rho_{q_i-q_{i-1}-r}^{(2)} +\rho_n^{(2)}.
\end{split}
\]
Observe that $\frac{1}{n}\sum_{i=1}^j(q_{i}-q_{i-1}-r)\rho_{q_i-q_{i-1}-r}^{(2)}\to0$ as $n\to\infty$. Indeed, since $q_{j}-q_{j-1}-r\to\infty$ as $j\to\infty$, for every $\varepsilon>0$ there exists $J>0$ so that for every $i\geq J$ $\rho_{q_{i}-q_{i-1}-r}^{(2)}<\varepsilon$ and thus, $\frac{1}{n}\sum_{i=1}^j(q_{i}-q_{i-1}-r)\rho_{q_i-q_{i-1}-r}^{(2)}\leq \frac{q_j-q_{J-1}}{n}\varepsilon+\frac{q_J\rho_1^{(2)}}{n}$. This proves the first assertion.

To prove the second assertion, we need a lower and an upper bound. For the upper bound we notice that the image under $\pi$ of a cylinder whose level is not of form $ \{q_j+1,\ldots, q_j+2r\}$ is contained in a cylinder of the same level. As for any set $X$ we can construct a family of covers realizing the topological entropy using only cylinders of levels not of form $ \{q_j+1,\ldots, q_j+2r\}$, the images of those cylinders will give us a family of covers of $\pi(X)$ realizing the same topological entropy.

For the lower bound, let $\mu$ be a measure supported on $X$ such that for every cylinder $C$ of level $\ell(C)=n$ we have
\[
\mu(C\cap X) \leq e^{(h_{\rm top}(X)+\varepsilon)\ell(C)}.
\]
Then if $n$ is not of form $ \{q_j+1,\ldots, q_j+2r\}$ but $q_j<n\leq q_{j+1}$ then for every cylinder $C'$ of level $n$ we have
\[
\pi_*(\mu)(C') \leq K^{(j+1)r} e^{(h_{\rm top}(X)+\varepsilon)\ell(C)}.
\]
Intuitively speaking  but not quite precisely, the map $\pi$ acting on initial words of length $\leq q_{j+1}$ is at most $K^{(j+1)r}$-to-1. As $j=o(q_j)$, the factor $K^{r(j+1)}$ is subexponential in $q_j$ and thus we get the lower bound from Lemma \ref{lem:Forstman}.
\end{proof}

The proof of Theorem~\ref{thm:cont} relies on the following technical proposition. It is a weighted version of the w-measure construction in Gelfert and Rams \cite[Section~5.2]{GR}. In simple but very vague words, we have some collections of sequences with given weighted Birkhoff averages $\alpha_i$ and we concatenate proper parts of them to construct 'Frankenstein' sequences with weighted Birkhoff average $\lim \alpha_i$. Important part is that if our starting collections were large (of large topological entropy), we can do it in a way that the constructed set of sequences also has large topological entropy.

\begin{prop}\label{prop:technical}
	Let $\varepsilon_n>0$ and $\alpha_n$ be sequences of reals such that\linebreak $\limsup_{n\to\infty}\alpha_n=\alpha_{\rm max}$, $\liminf_{n\to\infty}\alpha_n=\alpha_{\rm min}$ and $\lim_{n\to\infty}\varepsilon_n=0$. Moreover, assume that for every $n\geq1$ there exists a set $M_n\subset\Sigma_{\mathbf{A}}$ and a positive integer $T_n>0$ such that for every $\ii\in M_n$ and $m\geq T_n$
	$$
	\left|\frac{1}{m}\sum_{k=0}^{m-1}\phi_k(\sigma^k\ii)-\alpha_n\right|<\varepsilon_n.
	$$
	Then $h_{\rm top}(L_{\mathbf{A}}(\alpha_{\rm min},\alpha_{\rm max}))\geq\liminf_{n\to\infty}h_{\rm top}(M_n)$.
	
	Moreover, in case $\lim_{n\to\infty}\alpha_n=\alpha$ then there exists a set $M\subset L_{\mathbf{A}}(\alpha)$ such that
	the convergence $A_0^n(\ii)\to\alpha$ is uniform on $M$ and $h_{\rm top}(M)\geq\limsup_{n\to\infty}h_{\rm top}(M_n)$.
\end{prop}

For a subset $M\subset \Sigma$, we denote by $M[a,b]=\{\ii\in \AA^{b-a+1}: \exists \jj\in M, \jj|_a^b=\ii \}$. That is, the collection of $(b-a+1)$-words occurring in certain element of $M$ starting at place $a$ and ending at place $b$. Moreover, we use the notation $Z_a^b(M)=\#M[a,b-1]$ for convenience. It is clear that for $a<b<c$, we have $Z_a^c(M) \leq Z_a^b(M) \cdot Z_b^c(M)$.

\begin{lem}\label{lem:1a}
	Let $M$ be a set with $h_{\rm top}(M)>0$. Then for every $h<h_{\rm top}(M)$ there exists a sequence $(z_i)_{i\in \N}$ of $\N$ such that for every $z_i$ and for every $n>z_i$ we have $$\log Z_{z_i}^n(M) > (n-z_i)h.$$
\end{lem}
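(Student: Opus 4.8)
The statement asserts that if $h_{\rm top}(M)>0$ then for any $h<h_{\rm top}(M)$ we can find infinitely many "good starting times" $z_i$ from which the word-complexity of $M$ grows at rate at least $h$ forever after. The plan is to argue by contradiction. Suppose the conclusion fails for some $h<h_{\rm top}(M)$. Then the set of integers $z$ which are \emph{not} good is cofinite, i.e.\ there exists $z_0$ such that for every $z\geq z_0$ there is some $n=n(z)>z$ with $\log Z_z^n(M)\leq (n-z)h$. Starting from $z_0$ and iterating this (set $z_0'=n(z_0)$, $z_1'=n(z_0')$, and so on) produces an increasing sequence $z_0<z_1'<z_2'<\cdots$ of integers with $\log Z_{z_{k}'}^{z_{k+1}'}(M)\leq (z_{k+1}'-z_k')h$ for every $k$.

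Using the submultiplicativity $Z_a^c(M)\leq Z_a^b(M)\cdot Z_b^c(M)$ recorded just before the lemma, I would then bound $Z_0^{z_k'}(M)\leq Z_0^{z_0'}(M)\cdot\prod_{j=0}^{k-1}Z_{z_j'}^{z_{j+1}'}(M)\leq Z_0^{z_0'}(M)\cdot e^{h(z_k'-z_0')}$. This gives an absolute constant $c=Z_0^{z_0'}(M)e^{-hz_0'}$ with $Z_0^{z_k'}(M)\leq c\, e^{h z_k'}$ along the sequence $(z_k')$. Now I would use this to build an efficient cover of $M$ by cylinders of level $z_k'$ (one for each word in $M[0,z_k']$): this cover has cost $\sum e^{-s z_k'}$ over its elements equal to $Z_0^{z_k'}(M)e^{-s z_k'}\leq c\,e^{(h-s)z_k'}$, which tends to $0$ as $k\to\infty$ whenever $s>h$. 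Hence $\mathcal H^s(M)=0$ for every $s>h$ — taking the limit over the sequence $z_k'\to\infty$ in the definition of $\mathcal H^s$ — so $h_{\rm top}(M)\leq h$, contradicting $h<h_{\rm top}(M)$.

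The main subtlety to get right is the bookkeeping in the contradiction hypothesis: "the conclusion fails" must be correctly negated. The conclusion says: \emph{there exists} a sequence $(z_i)$ such that each $z_i$ is good (good meaning $\log Z_{z_i}^n(M)>(n-z_i)h$ for all $n>z_i$). Its negation is that only finitely many integers are good, equivalently all sufficiently large integers are bad, which is exactly the cofiniteness statement I used to launch the iteration. One should also note that the iteration $z\mapsto n(z)$ strictly increases, so the sequence $(z_k')$ is genuinely unbounded, which is what is needed both for the cover argument and for passing to the limit defining $\mathcal H^s$. No other step is delicate: submultiplicativity of $Z$, the crude cylinder cover, and the definition of topological entropy from Section~3 do all the remaining work.
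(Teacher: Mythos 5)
Your proof is correct and follows the same route as the paper's: argue by contradiction, observe that failure of the conclusion makes the set of good starting times finite so that one can iterate $z\mapsto n(z)$ to produce an increasing sequence $(z_k')$ along which $\log Z_{z_k'}^{z_{k+1}'}(M)\leq(z_{k+1}'-z_k')h$, chain these with the submultiplicativity $Z_a^c\leq Z_a^b Z_b^c$, and conclude $h_{\rm top}(M)\leq h$. The paper states the last implication without elaboration (one can also get it in one line from \eqref{eq:topentbasic} using the $\liminf$ along the subsequence $(z_k')$), whereas you spell it out via the cylinder-cover definition of $\mathcal H^s$; either way the argument is the same.
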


\begin{proof}
	Indeed, if it fails then we would be able to find  an increasing subsequence $(n_i)_{i\in \N}$ of $\N$  such that $\log Z_{n_i}^{n_{i+1}}(M) \leq (n_{i+1}-n_i)h$, and by summing them up this would imply $\log Z_0^{n_i}(M) \leq (n_i-n_0)h+\log Z_0^{n_0}(M)$ , hence $h_{\rm top}(M) \leq h$, which is a contradiction.
\end{proof}

\begin{proof}[Proof of Proposition~\ref{prop:technical}]
	Let $M_k$ be the sequence of subsets and $T_k$ as in the assumption. Moreover, let $\inf_k h_{\rm top}(M_k)>\delta>0$ be arbitrary but fixed. Then by Lemma~\ref{lem:1a}, for every $k\in\N$ there exists a sequence $(z_i^k)_{i\in\N}$ such that
	\begin{equation}\label{eq:lowertop}
	\log Z_{z_i^k}^n(M_k) > (n-z_i^k)(h_{\rm top}(M_k)-\delta)\text{ for every }n\geq z_{i}^k.
	\end{equation}
	We choose a subsequence $(N_k)_{k\in \N}$ of $\N$ satisfying the following properties:
	\begin{itemize}
		\item $N_0=0$, $N_{k-1}>T_{k}$;
		\item $N_k\in (z_i^{k+1})_{i\in \N}$;
		\item $\lim_{k\to\infty}\frac{N_{k+1}}{\sum_{j=1}^kN_j}=\infty$;
		\item $\log Z_0^n(M_1)\geq n(h_{\rm top}(M_1)-\delta)$ for all $n\geq N_1$.
	\end{itemize}
	Now, let us define sequences $2\geq r_k>1$ and $m(k)\in\N$ such that
	$$
	(r_k)^{m(k)}=\frac{N_{k}}{N_{k-1}}\text{, }\lim_{k\to\infty}r_k=1\text{ and }\lim_{k\to\infty}(r_k-1)\varepsilon_k^{-1}=\infty.
	$$
	
	Define a sequence $(t_i^{k})_{i=0}^{m(k)}$ by $t_{i}^k=\lfloor (r_k)^iN_{k-1} \rfloor$ for $i=0,\ldots,m(k)$. By definition, $t_{m(k)}^k=t_0^{k+1}$. It is easy to check that
	$$r_k -\frac{1}{N_{k-1}}\leq\frac {t_{i+1}^k} {t_{i}^k}\le r_k+\frac{2}{N_{k-1}}\text{ for }1\le i\le m(k)-1.$$

	Finally, let
	\begin{equation*}
	\begin{split}
	\widetilde{M}&=\{\ii\in \mathcal{A}^\N:  \ii|_0^{N_1-1}\in M_1[0, N_1-1], \\
	&\qquad\ii|_{t_i^k}^{t_{i+1}^k-1}\in M_k[t_i^k, t_{i+1}^k-1], \forall 0\le i\le m(k)-1, \forall k\ge 2 \}\\
	&=M_1[0, N_1-1]\times\prod_{k=2}^\infty\prod_{i=0}^{m(k)-1}M_k[t_i^k, t_{i+1}^k-1].
	\end{split}
	\end{equation*}
	In other words,	on positions $0,\ldots, N_1-1$ we can put any sequence that appears in $M_1$. For $k>1$, on positions in each $[t_i^k, t_{i+1}^k-1]$ we can put any sequence that can appear (on those positions) in $M_k$. Note that $\widetilde{M}$ is not necessarily a subset of $\Sigma_{\mathbf{A}}$, since it might happen that these concatenations are forbidden. We will use this set to construct one with the properties claimed in the statement, but first we show that $\widetilde{M}$ is a prototype of our goal set. Namely, we will first show that the set $\widetilde{M}\subseteq\Sigma$ satisfies
	\begin{enumerate}[(i)]
		\item\label{it:inL} $\alpha_{\min}\leq\liminf_{n\to\infty}A_0^n(\ii)\leq \limsup_{n\to\infty}A_0^n(\ii)\leq\alpha_{\max}$ for every $\ii\in\widetilde{M}$,
		\item\label{it:htop} $h_{\rm top}(\widetilde{M})\geq\liminf_{n\to\infty}h_{\rm top}(M_n)$.
	\end{enumerate}
	
	Consider $\ii\in \widetilde{M}$ and $n\in \N$. Take $k\in \N$ with $N_k\leq n < N_{k+1}$. Let $m$ be the largest number such that $n-t_m^{k+1}>0$. Remark that
	$$
	B_0^n(\ii)=B_0^{N_1}(\ii)+\sum_{j=2}^{k}\sum_{\ell=0}^{m(j)-1}B_{t_\ell^j}^{t_{\ell+1}^j}(\sigma^{t_\ell^j}\ii)+\sum_{\ell=0}^{m-1}B_{t_\ell^{k+1}}^{t_{\ell+1}^{k+1}}(\sigma^{t_\ell^{k+1}}\ii)+B_{t_m^{k+1}}^n(\sigma^{t_m^{k+1}}\ii).
	$$
	Observe that for every $t_{\ell}^j$ there exists a $\jj\in M_{j}$ such that for every $t_{\ell}^j\leq i<t_{\ell+1}^j$, $|\phi_i(\sigma^i\ii)-\phi_i(\sigma^i\jj)|\leq\mathrm{var}_{t_{\ell+1}^j-i}(\phi_i)$, and thus
	$$
	B_{t_\ell^j}^{t_{\ell+1}^j}(\sigma^{t_\ell^j}\ii)=\sum_{i=t_\ell^j}^{t_{\ell+1}^j-1}\phi_i(\sigma^i\ii)\leq\sum_{i=t_\ell^j}^{t_{\ell+1}^j-1}\phi_i(\sigma^i\jj)+\sum_{i=t_\ell^j}^{t_{\ell+1}^j-1}\mathrm{var}_{t_{\ell+1}^j-i}(\phi_i).
	$$
	Hence, by Lemma \ref{lem:m n k}
	\[\begin{split}
	B_{0}^n(\ii)\leq&\alpha_1N_1+\sum_{j=2}^{k}\alpha_{j}(N_{j}-N_{j-1})+(n-N_k)\alpha_{k+1}+\varepsilon_1 N_1\\
	&+\sum_{j=2}^{k}\sum_{\ell=0}^{m(j)-1}\varepsilon_{j}(t_{\ell+1}^j+t_\ell^j)+\sum_{\ell=0}^{m-1}\varepsilon_{k+1}(t_{\ell+1}^{k+1}+t_\ell^{k+1})+(n+t_m^{k+1})\varepsilon_{k+1}\\
	&+\sum_{j=2}^{k}\sum_{\ell=0}^{m(j)-1}\sum_{i=t_\ell^j}^{t_{\ell+1}^j-1}\mathrm{var}_{t_{\ell+1}^j-i}(\phi_i)+\sum_{\ell=0}^{m-1}\sum_{i=t_\ell^{k+1}}^{t_{\ell+1}^{k+1}-1}\mathrm{var}_{t_{\ell+1}^{k+1}-i}(\phi_i)+\sum_{i=t_{m}^{k+1}}^{n-1}\mathrm{var}_{t_{m+1}^{k+1}-i}(\phi_i).
	\end{split}
	\]
	Observe that
	$$
	\sum_{\ell=0}^{m(j)-1}\varepsilon_{j}(t_{\ell+1}^j+t_\ell^j)\leq\sum_{\ell=0}^{m(j)-1}\varepsilon_{j}r_j^\ell(r_j+1)N_{j-1}\leq\frac{3\varepsilon_{j}N_{j-1}(r_j^{m(j)}-1)}{r_j-1}\leq\frac{3\varepsilon_{j}N_{j}}{r_j-1}.
	$$
	Hence,
	\[
	\begin{split}
	&\frac{1}{n}\left(\sum_{j=2}^{k}\sum_{\ell=0}^{m(j)-1}\varepsilon_{j}(t_{\ell+1}^j+t_\ell^j)+\sum_{\ell=0}^{m-1}\varepsilon_{k+1}(t_{\ell+1}^{k+1}+t_\ell^{k+1})+(n+t_m^{k+1})\varepsilon_{k+1}\right)\\
	&\qquad\leq\frac{1}{n}\left(\sum_{j=2}^k\frac{3\varepsilon_{j}}{r_j-1}N_{j}+\frac{3\varepsilon_{k+1}N_{k}(r_{k+1}^m-1)}{r_{k+1}-1}+(n+t_m^{k+1})\varepsilon_{k+1}\right)\\
	&\qquad\leq O(\frac{\sum_{j=2}^{k-1}N_j}{N_k})+\frac{3\varepsilon_k}{r_k-1}+\frac{3\varepsilon_{k+1}(t_m^{k+1}+1)}{n(r_{k+1}-1)}+\frac{(n+t_m^{k+1})\varepsilon_{k+1}}{n}\\
	&\qquad\leq O(\frac{\sum_{j=2}^{k-1}N_j}{N_k})+\frac{3\varepsilon_k}{r_k-1}+\frac{6\varepsilon_{k+1}}{r_{k+1}-1}+2\varepsilon_{k+1}=o(1).
	\end{split}
	\]
	On the other hand, since $\rho_i^{(1)}\to0$ as $i\to\infty$, where $\rho_i^{(1)}$ is defined in \eqref{def:rho1}, we get $\frac{1}{i}\sum_{j=1}^{i}\rho_j^{(1)}\to0$ as $i\to\infty$ and hence,
	\begin{multline*}
	\sum_{j=2}^{k}\sum_{\ell=0}^{m(j)-1}\sum_{i=t_\ell^j}^{t_{\ell+1}^j-1}\mathrm{var}_{t_{\ell+1}^j-i}(\phi_i)+\sum_{\ell=0}^{m-1}\sum_{i=t_\ell^{k+1}}^{t_{\ell+1}^{k+1}-1}\mathrm{var}_{t_{\ell+1}^{k+1}-i}(\phi_i)+\sum_{i=t_{m}^{k+1}}^{n-1}\mathrm{var}_{t_{m+1}^{k+1}-i}(\phi_i)\\
	\leq\sum_{j=2}^{k}\sum_{\ell=0}^{m(j)-1}\sum_{i=t_\ell^j}^{t_{\ell+1}^j-1}\rho_{t_{\ell+1}^j-i}^{(1)}+\sum_{\ell=0}^{m-1}\sum_{i=t_\ell^{k+1}}^{t_{\ell+1}^{k+1}-1}\rho_{t_{\ell+1}^{k+1}-i}^{(1)}+\sum_{i=t_{m}^{k+1}}^{n-1}\rho_{t_{m+1}^{k+1}-i}^{(1)}\\
	\leq\sum_{j=2}^{k}\sum_{\ell=0}^{m(j)-1}(t_{\ell+1}^j-t_\ell^j)o(1)+\sum_{\ell=0}^{m-1}(t_{\ell+1}^{k+1}-t_\ell^{k+1})o(1)+(n-t_{m}^{k+1}-1)o(1)\\
	=n\cdot o(1).
	\end{multline*}
	
	The lower bound is similar, thus we get for every $N_k\leq n< N_{k+1}$
	$$
	A_0^n(\ii)=\frac{\alpha_kN_k+(n-N_k)\alpha_{k+1}}{n}+o(1).
	$$
	This together with $N_{k+1}/N_{k}\to\infty$ implies \eqref{it:inL}. Moreover, if $\alpha_k\to\alpha$, this shows that the convergence $A_0^n\to\alpha$ is uniform on $\widetilde{M}$. So it only remains to show \eqref{it:htop}.
	
	
	We pick arbitrarily $t_{\ell}^k\le n<t_{\ell+1}^{k}$ for some $k\in \N$ and $0\le \ell\le m(k)-1$. By definition of $\widetilde{M}$ and \eqref{eq:lowertop},  we have	
	\[
	Z_{N_{k-1}}^{t_\ell^k}(\widetilde{M}) \geq Z_{N_{k-1}}^{t_\ell^k}(M_k) \geq \exp((t_\ell^k-N_{k-1})(h_{\rm top}(M_k)-\delta))
	\]
	The last inequality is due to the fact that $N_{k-1}\in (z_i^k)_{i\in \N}$.
	Similarly, we see that
	$$
	Z_{N_{k-1}}^{N_{k}}(\widetilde{M}) \geq \exp((N_{k}-N_{k-1})(h_{\rm top}(M_k)-\delta)).
	$$
	 Since $\widetilde{M}[N_{i-1}, N_{i}-1]$ $\widetilde{M}[N_{j-1},N_{j}-1]$ are independent for $i\not=j$, we have
	\begin{multline}\label{eq:lowertop2}
	Z_0^{t_\ell^k}(\widetilde{M}) = Z_{N_{k-1}}^{t_\ell^k}(\widetilde{M}) \cdot \prod_{i=1}^{k-1}Z_{N_{i-1}}^{N_{i}}(\widetilde{M})\\
	\ge \exp\left((t_\ell^k-N_{k-1})(h_{\rm top}(M_k)-\delta)+\sum_{i=1}^{k-1}(h_{\rm top}(M_i)-\delta)(N_{i}-N_{i-1})\right)\\
	\geq \exp\left(t_\ell^k(\liminf_{i\to\infty}h_{\rm top}(M_i)-o(1)-\delta)\right).
	\end{multline}
	
	We define a probability measure $\mu$ as follows. For any $\ii\in\Sigma_n$, let $k\in \N$ and $0\le \ell\le m(k)-1$ be the unique integer such that $t_\ell^k< n\leq t_{\ell+1}^k$, and let
	$$
	\mu([\ii])=\frac{\#\{A\in M[0,t_{\ell+1}^k-1]:[\ii]\supset[A]\}}{Z_{0}^{t_{\ell+1}^k}(\widetilde{M})}
	$$
	It is easy to see that $\mu$ is a well defined measure supported on $\widetilde{M}$. Indeed, if $|\ii|<t_{\ell+1}^k$ then
	\[
	\begin{split}
	\sum_{j\in\mathcal{A}}\mu[\ii j]&=\sum_{j\in\mathcal{A}}\frac{\#\{A\in \widetilde{M}[0,t_{\ell+1}^k-1]:[\ii j]\supset[A]\}}{Z_{0}^{t_{\ell+1}^k}(\widetilde{M})}\\
	&=\frac{\#\{A\in \widetilde{M}[0,t_{\ell+1}^k-1]:\text{ there exists $j\in\mathcal{A}$ such that }[\ii j]\supset[A]\}}{Z_{0}^{t_{\ell+1}^k}(\widetilde{M})}\\
	&=\frac{\#\{A\in \widetilde{M}[0,t_{\ell+1}^k-1]:[\ii]\supset[A]\}}{Z_{0}^{t_{\ell+1}^k}(\widetilde{M})},
	\end{split}
	\]
	and if $|\ii|=t_{\ell+1}^k$ then
	\[
	\begin{split}
	\sum_{j\in\mathcal{A}}\mu([\ii j])&=\sum_{j\in\mathcal{A}}\frac{\#\{A\in \widetilde{M}[0,t_{\ell+2}^k-1]:[\ii j]\supset[A]\}}{Z_{0}^{t_{\ell+2}^k}(\widetilde{M})}\\
	&=\frac{\#\{A\in \widetilde{M}[0,t_{\ell+2}^k-1]:\text{ there exists $j\in\mathcal{A}$ such that }[\ii j]\supset[A]\}}{Z_{0}^{t_{\ell+1}^k}(\widetilde{M})Z_{t_{\ell+1}^k}^{t_{\ell+2}^k}(\widetilde{M})}\\
	&=\frac{ Z_{t_{\ell+1}^k}^{t_{\ell+2}^k}(\widetilde{M})\delta_{\ii\in \widetilde{M}[0,t_{\ell+1}^k-1]}}{Z_{0}^{t_{\ell+1}^k}(\widetilde{M})Z_{t_{\ell+1}^k}^{t_{\ell+2}^k}(\widetilde{M})},
	\end{split}
	\]
	where with a slight abuse of notation we used the $t_{m(k)+1}^k:=t_{1}^{k+1}$.
	
	By \eqref{eq:lowertop2}, we have that for every $\ii\in \widetilde{M}$
	\[
	\begin{split}
	\liminf_{n\to\infty} \frac{-\log\mu([\ii|_0^n])}{n}&\ge \liminf_{n\to\infty}\frac{t_\ell^k}{n}\left(\liminf_{i\to\infty}h_{\rm top}(M_i)-o(1)-\delta\right)\\
	&\geq \liminf_{k\to\infty}r_k\left(\liminf_{i\to\infty}h_{\rm top}(M_i)-o(1)-\delta\right)\\
	&=\liminf_{i\to\infty}h_{\rm top}(M_i)-\delta.
	\end{split}
	\]
	By Lemma \ref{lem:Forstman}, we get \eqref{it:htop}.

We are now almost done. We have constructed the set $\widetilde{M}$ which has almost all the demanded properties, the only one that is still missing is that $\widetilde{M}$ is not necessarily a subset of $\Sigma_{\mathbf{A}}$. The last step is to find a map $\pi$ satisfying the assumptions of Lemma \ref{lem:techn} and such that $\pi(\widetilde{M})\subset \Sigma_{\mathbf{A}}$. Observe that the assertion of Lemma \ref{lem:techn} will guarantee that the set $M=\pi(\widetilde{M})$ will satisfy the assertion of Proposition \ref{prop:technical}.

It is easy enough to do. Let us put the points $(t_i^k)_{i,k}$ in the increasing order and denote this sequence by $(q_j)$ (ignoring the initial finitely many terms we can freely assume that $q_{j+1}-q_j>2r$). Observe that each sequence $\ii|_{q_j+1}^{q_{j+1}}$ is an admissible word in $\Sigma_{\mathbf{A},*}$. We can thus modify $\ii$ only on positions $q_j+1,\ldots, q_j+r; j=1, 2, \ldots$ so that we obtain a sequence in $\Sigma_{\mathbf{A}}$. Each modification on positions $q_j+1,\ldots, q_j+r$ can be chosen depending only on $\ii|_{q_{j-1}+1}^{q_j}$ and $\ii|_{q_j+r+1}^{q_{j+1}}$, that is, there exists $\jj\in\Sigma_{\mathbf{A},r}$ such that $\ii|_{q_{j-1}+1}^{q_j}\jj\ii|_{q_j+r+1}^{q_{j+1}}\in\Sigma_{\mathbf{A},*}$. Thus, choosing those modifications in a consistent way we can construct a map $\pi:\Sigma\to\Sigma$ satisfying the assumptions of Lemma \ref{lem:techn} and such that $\pi(\widetilde{M})\subset \Sigma_\mathbf{A}$. 

	
	Finally, to obtain the second part of the assertion let us consider the case when $\lim_{n\to\infty}\alpha_n=\alpha$. By taking a supsequence $n_k$ such that $\limsup_{n\to\infty}h_{\rm top}(M_n)=\lim_{k\to\infty}h_{\rm top}(M_{n_k})$, and applying the previous argument for the sequences $\{\alpha_{n_k}\}_k$ and $\{\varepsilon_{n_k}\}_k$ and $\{M_{n_k}\}_k$ we get the claimed statement.
\end{proof}

\begin{cor}\label{cor:egorov}
	If $L_{\mathbf{A}}(\alpha)\neq\emptyset$ then for every $\delta>0$ there exists $\emptyset\neq M\subset L_{\mathbf{A}}(\alpha)$ such that $h_{\rm top}(M)>h_{\rm top}(L_{\mathbf{A}}(\alpha))-\delta$ and the convergence of $A_0^n(\ii)\to\alpha$ on $M$ is uniform.
\end{cor}

\begin{proof}
	Take a sequence $\varepsilon_n\to0$ be arbitrary but fixed. Since $A_0^n(\ii)\to\alpha$ as $n\to\infty$ for every $\ii\in L(\alpha)$, there exists $N_n(\ii)$ such that for every $m\geq N_n(\ii)$, $|A_0^m(\ii)-\alpha|<\varepsilon_n$. For every $n\geq1$ and $T\geq1$, let
	$$
	M_{n,T}=\{\ii\in L_{\mathbf{A}}(\alpha):N_n(\ii)\leq T\}.
	$$
	Since $L_{\mathbf{A}}(\alpha)=\bigcup_{T=1}^\infty M_{n,T}$ we get that there exists a $T_n$ such that $h_{\rm top}(M_{n,T_n})>h_{\rm top}(L(\alpha))-\delta$.
	
	By applying Proposition~\ref{prop:technical} for the sequence $\alpha_n\equiv\alpha$, $\varepsilon_n$ and $M_n:=M_{n,T_n}$, we get that there exists a set $M\subset L_{\mathbf{A}}(\alpha)$ such that
	$h_{\rm top}(M)\geq\limsup_{n\to\infty}h_{\rm top}(M_{n,T_n})\geq h_{\rm top}(L_{\mathbf{A}}(\alpha))-\delta$, and the convergence is uniform on $M$.
\end{proof}

\begin{cor}\label{cor:lowersemi}
	The map $\alpha\mapsto h_{\rm top}(L_{\mathbf{A}}(\alpha))$ is upper semi-continuous.
\end{cor}

\begin{proof}
	Let $\alpha_n\to \alpha$ be such that $L_{\mathbf{A}}(\alpha_n)\neq\emptyset$. Then we can use Corollary \ref{cor:egorov} to find in each $L_{\mathbf{A}}(\alpha_n)$ a large entropy subset $M_n$ with uniform convergence of the Birkhoff averages, then we apply Proposition \ref{prop:technical} to get the assertion.
\end{proof}

\begin{lem}\label{lem:compact}
	The domain of $\alpha\mapsto h_{\rm top}(L_{\mathbf{A}}(\alpha))$ is compact.
\end{lem}
\begin{proof}
	Suppose $A(\ii_k)=\alpha_k$ for $\ii_k\in \Sigma_{\mathbf{A}}, k\in \N$ satisfying that $\alpha_k\to\alpha$ as $k\to \infty$. We will show $\alpha\in \DD(L_{\mathbf{A}})$. Fix $\epsilon>0$. Then there exists $(N_k)_{k\in \N}$ of positive integers such that for any $n\ge N_k$, $|A_0^n(\ii_k)-\alpha_k|<\epsilon_k$ with $\epsilon_k\to 0$. We pick two sequences $(m_k)_{k\in \N}$ and $(n_k)_{k\in \N}$ satisfying the following conditions:
	\begin{itemize}
		\item $m_0=0$, $m_k<n_k=m_{k+1}-r$ for $k\in \N$.
		\item $n_{k}\ge N_{k+1}$ for $k\in \N$.
		\item $\frac{n_{k-1}}{n_k}\to 0$ as $k\to \infty$.
		\item The set $\{n\in \N: n\notin [m_k, n_k], \forall k  \}$ has density $0$.
	\end{itemize}
	Then we take an $\ii\in \Sigma_{\mathbf{A}}$ such that $\ii|_{m_k}^{n_k}=\ii_k|_{m_k}^{n_k}$ for $k\in \N$ (such $\ii$ exists but may not be unique due to that $\Sigma_{\mathbf{A}}$ is irreducible). It follows from Lemma \ref{lem:m n k} that for $m_k\le n\le n_k$, we have 
	$$
	|A_0^n(\ii)-\alpha|\le \frac{n_{k-1}}{n}|A_0^{n_{k-1}}(\ii)-\alpha|+\frac{n-m_k}{n}|\alpha_k-\alpha|+\frac{r}{n}+2\epsilon_k.
	$$
	Since $\frac{n_{k-1}}{n_k}\to 0$, $\alpha_k\to \alpha$ and $r$ is a constant, we conclude that $A_0^n(\ii)\to \alpha$ as $n\to \infty$.
\end{proof}

The following proposition is in a sense similar to Proposition~\ref{prop:technical}. Like there, we have some given collections of sequences with prescribed weighted Birkhoff averages and use them to construct the large set of their 'Frankenstein' offsprings. However, the technical process of constructing the concatenated sequences is noticeably different.

\begin{prop}\label{prop:concave}
	The domain of $f\colon\alpha\mapsto h_{\rm top}(L_{\mathbf{A}}(\alpha))$ is a (possibly empty) closed convex set and $f$ is a concave function.
\end{prop}

\begin{proof}
	Let $\alpha, \alpha'$ be in the domain of $f$. Assuming that $L_{\mathbf{A}}(\alpha)$ and $L_{\mathbf{A}}(\alpha')$ are nonempty, we want to prove that $L_{\mathbf{A}}(p\alpha + (1-p)\alpha')$ is nonempty and that $f(p\alpha + (1-p)\alpha')\geq pf(\alpha) + (1-p)f(\alpha')$ for all $p\in (0,1)$. Pick arbitrarily $\epsilon>0$.
	By Corollary~\ref{cor:egorov}, for every $\varepsilon>0$ there exist subsets $M(\alpha)\subset L_{\mathbf{A}}(\alpha)$ and $M(\alpha')\subset L_{\mathbf{A}}(\alpha')$ such that
	\begin{itemize}
		\item $h_{\rm top}(M(\alpha))> f(\alpha)-\epsilon$ and $h_{\rm top}(M(\alpha'))> f(\alpha')-\epsilon$;
		\item there exists an increasing sequence $(N_k)_{k\in \N}$ such that for every $\ii\in M(\alpha)$ and every $\ii'\in M(\alpha')$, for every $k$ for every $n>N_k$ we have $|A_0^n(\ii)-\alpha| \leq 1/k$ and $|A_0^n(\ii')-\alpha'| \leq 1/k$.
	\end{itemize}
	We choose two sequences $(t_i)_{i\in \N}, (s_i)_{i\in \N}$ satisfying the following conditions.
	\begin{itemize}
		\item [(i)] $t_0=0$, $t_i\nearrow\infty$ and $t_{i+1}/t_i\searrow 1$.
		\item [(ii)] $s_i\to\infty$.
		\item [(iii)]$(t_{i+1}-t_i)$ is divisible by $s_i$ and $\frac{t_{i+1}-t_i}{s_i}\nearrow\infty$.
		\item[(iv)] $\frac{2s_it_{i+1}}{n(t_{i+1}-t_i)}\to 0$ where $n$ is the largest number such that $N_n<t_i$.
	\end{itemize}
	For example, we can choose $t_{i+1}/t_i \sim 1+n^{-1/2}$ and $s_i \sim n^{1/3}$ where $n$ is the largest number such that $N_n<t_i$.
	We divide each interval $[t_i, t_{i+1}-1]$ into $s_i$ equal subintervals, with endpoints $z_0^i=t_i, z_1^i=t_{i}+(t_{i+1}-t_i)/s_i,\ldots,z_{s_i}^i=t_{i+1}$.
	We will construct a set $\widetilde{M}\subset\Sigma$ step by step as follows.
	
	{\rm Step 0.} At positions $0,\ldots, t_1-1$ we can put anything.
	
	{\rm Step i $(i\ge 1)$.}  We put the $s_i$ numbers
	\[
	W_k^i := \log Z_{z_k^i}^{z_{k+1}^i}(\alpha) - \log Z_{z_k^i}^{z_{k+1}^i}(\alpha'); k=0,1,\ldots s_i-1
	\]
	in an increasing order and we choose $\lfloor p s_i \rfloor$ largest ones. At those chosen intervals the sequences in $\widetilde{M}$ will be taken from $M_{z_k^i}^{z_{k+1}^i}(\alpha)$, at the not chosen intervals from $M_{z_k^i}^{z_{k+1}^i}(\alpha')$.
	
	It is enough to show that $\widetilde{M}\subset\Sigma$ has the following properties:
	
	\medskip
	
	{\rm Claim 1:} for $\ii\in \widetilde{M}$ we have
	\[
	A(\ii) = p\alpha + (1-p)\alpha'.
	\]
	
	\medskip
	
	{\rm Claim 2:} $h_{\rm top}(\widetilde{M})\geq pf(\alpha) + (1-p)f(\alpha')$.
	
	\medskip
	
	Indeed, just like in the proof of Proposition~\ref{prop:technical}, we will prove that there exists a map $\pi\colon\Sigma\mapsto\Sigma$ such that $\pi(\widetilde{M})\subseteq\Sigma_{\mathbf{A}}$ and the assumptions of Lemma~\ref{lem:techn} hold.
	
	\begin{proof}[Proof of Claim 1]
		As $t_{i+1}/t_i\to 1$, it is enough to check that $A_0^{t_i}(\ii)\to p\alpha + (1-p)\alpha'$ as $i$ tends to $\infty$. Pick $i$ and $n$ such that $N_{n}<t_i\le N_{n+1}$.
		By Lemma \ref{lem:m n k}, we have
		\begin{equation*}
		\begin{split}
		&|A_{t_i}^{t_{i+1}}(\ii)-\left(p\alpha + (1-p)\alpha'\right)|\\
		=&|\sum_{k=0}^{s_i-1}\frac{1}{s_i} A_{z_k^i}^{z_{k+1}^i}(\ii)-\left(p\alpha + (1-p)\alpha'\right)|\\
		\le &I_1^i + I_2^i+I_3^i,
		\end{split}
		\end{equation*}
		where
		$$
		I_1^i=\sum_{k=0}^{s_i-1}\frac{1}{s_i} \rho^{(2)}_{\frac{t_{i+1}-t_i}{s_i}}=\rho^{(2)}_{\frac{t_{i+1}-t_i}{s_i}},
		$$
		$$
		I_2^i=\sum_{k=0}^{s_i-1}\frac{1}{s_i} \cdot \frac{z_k^i+z_{k+1}^i}{n\left( \frac{t_{i+1}-t_i}{s_i}\right)}\le \frac{2s_it_{i+1}}{n(t_{i+1}-t_i)},
		$$
		and
		$$
		I_3^i=\left|\frac{1}{s_i}\left(\lfloor ps_i \rfloor \alpha +(s_i-\lfloor ps_i \rfloor) \alpha \right)-\left(p\alpha + (1-p)\alpha'\right)\right|.
		$$
		By $(ii)$, it is easy to see that $I_3^i$ converges $0$ as $i$ tends to $\infty$.
		By $(iii)$ and the fact that $\rho_\ell^{(2)}\to 0$ as $\ell \to \infty$, we see that $I_1^i$ converges $0$ as $i$ tends to $\infty$. By $(iv)$, $I_2^i$ converges $0$ as $i$ tends to $\infty$. Thus we obtain that $A_{t_i}^{t_{i+1}}(\ii)$ converges $p\alpha + (1-p)\alpha'$ as $i$ tends to $\infty$. Since $A_0^{t_i}(\ii)=\frac{1}{t_i}\sum_{j=0}^{i-1} (t_{j+1}-t_j) A_{t_j}^{t_{j+1}}(\ii)$, we complete the proof.
	\end{proof}
	
	\begin{proof}[Proof of Claim 2]
		Observe that the constructions for different $j$ are completely independent from each other: whatever the initial $t_j$ symbols of $\ii\in \widetilde{M}$, we allow any admissible $t_{j+1}-t_j$ symbols to follow.
		Thus we have
		\begin{equation}\label{eq:11}
		Z_0^{t_i}(\widetilde{M})=Z_0^{t_1}(\widetilde{M}) \cdot \prod_{k=1}^{i-1} Z_{t_k}^{t_{k+1}}(\widetilde{M})
		\end{equation}
		and
		\begin{equation}\label{eq:12}
		Z_{t_k}^{t_{k+1}}(\widetilde{M}) \geq \left(\prod_{\ell=0}^{s_k-1} Z_{z_\ell^k}^{z_{\ell+1}^k}(M(\alpha))\right)^{\lfloor ps_k\rfloor/s_k} \cdot \left(\prod_{\ell=0}^{s_k-1} Z_{z_\ell^k}^{z_{\ell+1}^k}(M(\alpha'))\right)^{1-\lfloor ps_k\rfloor/s_k},
		\end{equation}
		for $1\le k\le i-1$.
		Moreover, we have
		\begin{equation}\label{eq:13}
		\prod_{\ell=0}^{s_k-1} Z_{z_\ell^k}^{z_{\ell+1}^k}(M(\alpha)) \geq Z_{t_k}^{t_{k+1}}(M(\alpha))
		\end{equation}
		and
		\begin{equation}\label{eq:14}
		\prod_{k=1}^{i-1} Z_{t_k}^{t_{k+1}}(M(\alpha))| \geq Z_{t_1}^{t_{i}}(M(\alpha)).
		\end{equation}
		The same holds for $\alpha'$.
		We define the probability measure $\mu$ such that for an $\ii\in\Sigma_n$ let $t_{i-1}<n\leq t_{i}$ and
		$$
		\mu([\ii])=\frac{\#\{A\in \widetilde{M}[0,t_i-1]:[\ii]\supset A \}}{Z_{0}^{t_i}(\widetilde{M})}.
		$$
		Similarly to the proof of Proposition~\ref{prop:technical}, $\mu$ is a well defined probability measure supported on $\widetilde{M}$. By \eqref{eq:11}, \eqref{eq:12}, \eqref{eq:13} and \eqref{eq:14}, as $t_{i+1}/t_i\to 1$, we have that
		\begin{equation*}
		\begin{split}
		&\liminf_{n\to\infty} \frac{-\log\mu(C_n\cap \widetilde{M})}{n} \\
		\ge & \liminf_{i\to\infty} \frac 1 {t_i} \left(p\log Z_0^{t_i}(M(\alpha))+(1-p)\log Z_0^{t_i}(M(\alpha')) \right)\\
		\ge & pf(\alpha)+(1-p)f(\alpha'),
		\end{split}	
		\end{equation*}
		for any decreasing sequence $(C_n)_{n\in \N}$ of cylinders with $C_n\cap \widetilde{M}\not=\emptyset$.
		By Lemma \ref{lem:Forstman}, this completes the proof.	
	\end{proof}


	As in the proof of Proposition \ref{prop:technical}, we have now obtained a set $\widetilde{M}$ satisfying all the necessary properties except for one: it does not have to be contained in $\Sigma_{\mathbf{A}}$. Again, we have the same solution to this problem: we will find a map $\pi$ satisfying the assumptions of Lemma \ref{lem:techn} such that $\pi(\widetilde{M})\subset \Sigma_{\mathbf{A}}$. It is done in almost the same manner: we define $(q_j)_j=(z_k^i)_{k,i}$ and then we modify each sequence $\ii\in\widetilde{M}$ on the initial $r$ positions of every interval $(q_j, q_{j+1}]$.
	
	Therefore, we complete the proof.
	
\end{proof}

\begin{proof}[Proof of Theorem~\ref{thm:cont}]
	Since any concave function is clearly lower semi-continuous, Lemma \ref{lem:compact} and Proposition~\ref{prop:concave} together with Corollary~\ref{cor:lowersemi} implies the claim.
\end{proof}

\begin{proof}[Proof of Theorem~\ref{thm:contgen}]

There are two cases. Consider first the simple case:
$h_{\rm top}(L_{\mathbf{A}}(\alpha_0))= h_{\rm top}(\Sigma_{\mathbf{A}})$.

Fix some  $\varepsilon > 0$. We assume that the spectrum domain is larger than one point, hence
by Theorem \ref{thm:cont} we can find a value $\alpha_1$ such that
$h_{\rm top}(L_{\mathbf{A}}(\alpha_1))> h_{\rm top}(L_{\mathbf{A}}(\alpha_0))-\varepsilon$.  By Corollary \ref{cor:egorov} we can find
a set $M_0$ such that $h_{\rm top} (M_0) > h_{\rm top} (L_{\mathbf{A}}(\alpha_0))-\varepsilon$ and that
the convergence $A_0^n(\ii)\to \alpha_0$ is uniform in $M_0$ and we can find a set $M_1$ such that
$h_{\rm top} (M_1) > h_{\rm top} (L_{\mathbf{A}}(\alpha_1))-\varepsilon$ and that the convergence
$A_0^n(\ii)\to \alpha_1$ is uniform in $M_1$. We then apply the Proposition \ref{prop:technical} to the
sequence of sets $M_1, M_0, M_1, M_0, \ldots$, with $\alpha_n$ being $\alpha_0$ or $\alpha_1$
depending on $n$ being even or odd. We get
\[
h_{\rm top}(L_{\mathbf{A}}(\alpha_0, \alpha_1))\geq h_{\rm top} (L_{\mathbf{A}}(\alpha_0))-2\varepsilon.
\]
Naturally, $L_{\mathbf{A}}(\alpha_0, \alpha_1)\subset D$, hence passing with $\varepsilon$ to zero ends
the proof.

The complicated case is when $h_{\rm top}(L_{\mathbf{A}}(\alpha_0))< h_{\rm top}(\Sigma_{\mathbf{A}})$. Note that
we can still freely assume that $h_{\rm top} (\Sigma_{\mathbf{A}}\setminus D) = h_{\rm top}(\Sigma_{\mathbf{A}})$, otherwise
we would have $h_{\rm top}(D) = h_{\rm top}(\Sigma_{\mathbf{A}})$ immediately. We start with a simple observation.

\begin{lem} \label{lem:meeting}
There exists $\beta_0$ such that the sets $\{\ii\in \Sigma_{\mathbf{A}}; A(\ii)< \beta_0\}$ and
$\{\ii\in\Sigma_{\mathbf{A}}; A(\ii)>\beta_0\}$ are both of full entropy $h_{\rm top}(\Sigma_{\mathbf{A}})$.
\end{lem}
\begin{proof}
The function $\beta \to h_{\rm top} (\bigcup_{\alpha<\beta} L_{\mathbf{A}}(\alpha))$ is nondecreasing and
left continuous, hence the set
$\{\beta: h_{\rm top} (\bigcup_{\alpha<\beta} L_{\mathbf{A}}(\alpha)) = h_{\rm top}(\Sigma_{\mathbf{A}})\}$ is closed.
So is the set $\{\beta: h_{\rm top} (\bigcup_{\alpha>\beta} L_{\mathbf{A}}(\alpha)) = h_{\rm top}(\Sigma_{\mathbf{A}})\}$,
for analogous reason. Hence, the two sets must intersect -- otherwise we would have some $\beta$
which would belong to neither, and this is impossible because 
\[
\Sigma_{\mathbf{A}}\setminus D = \bigcup_{\alpha<\beta} L_{\mathbf{A}}(\alpha) \cup \bigcup_{\alpha>\beta} L_{\mathbf{A}}(\alpha) \cup L_{\mathbf{A}}(\beta)
\]
and all three sets on the right would have entropy strictly smaller than the one on the left.
\end{proof}

We fix $\varepsilon>0$. Using again the left-continuity of the function
$\beta \to h_{\rm top} (\bigcup_{\alpha<\beta} L_{\mathbf{A}}(\alpha))$
we can find some $\beta_1<\beta_0$ such that
$ h_{\rm top} (\bigcup_{\alpha<\beta_1} L_{\mathbf{A}}(\alpha)) > h_{\rm top} (\Sigma_{\mathbf{A}}) - \varepsilon$.
Let $M_+ = \bigcup_{\alpha>\beta_0} L_{\mathbf{A}}(\alpha)$ and $M_-= \bigcup_{\alpha<\beta_1} L_{\mathbf{A}}(\alpha)$.

We now need a one-sided version of Proposition \ref{prop:technical}.

\begin{prop} \label{prop:technical2}
Let $\varepsilon_n>0$, $\varepsilon_n\to 0$.
Let $\alpha_n$ be a sequence such that $\alpha_{2k}\to \beta_0$ and $\alpha_{2k+1}\to \beta_1$.
Moreover, assume that for every $n\geq 1$ there exists a set $M_n\subset\Sigma_{\mathbf{A}}$ and a positive
integer $T_n>0$ such that for every $\ii\in M_n$ and $m\geq T_n$ we have
	$$
	\frac{1}{m}\sum_{k=0}^{m-1}\phi_k(\sigma^k\ii)>\alpha_n - \varepsilon_n
	$$
(if $n$ is even) or
$$
	\frac{1}{m}\sum_{k=0}^{m-1}\phi_k(\sigma^k\ii)<\alpha_n + \varepsilon_n
	$$
(if $n$ is odd).
Then we can find a set $M\subset \Sigma_{\mathbf{A}}$ such that for $\ii\in M$ we have
$\underline{A}(\ii) \leq \beta_1$ and $\overline{A}(\ii) \geq \beta_0$ and that
$h_{\rm top} M \geq \liminf h_{\rm top} M_i$.
\end{prop}
\begin{proof}
The proof is virtually identical with the proof of Proposition \ref{prop:technical}.
The construction and the calculation of entropy is the same, the only difference is that
when the sets $M_i$ give only one-sided bounds on the behavior of the Birkhoff sums,
we can only get a weaker statement about $\underline{A}$ and $\overline{A}$. We skip the details.
\end{proof}

We can now fix any sequence $\varepsilon_n\to 0$ and use the sets $M_-$ and $M_+$ defined above to construct the sets $M_n$ satisfying the assumptions of
Proposition \ref{prop:technical2}, in such a way that
$h_{\rm top} M_{2k} > h_{\rm top} M_+ - \varepsilon$ and
$h_{\rm top} M_{2k+1} > h_{\rm top} M_- - \varepsilon$ (by choosing $T_{2k}$, resp. $T_{2k+1}$, large enough).
Using now Proposition \ref{prop:technical2} with those sets $M_n$ we construct a set $M$ which is by
construction contained in $D$, moreover $h_{\rm top} M > h_{\rm top}(\Sigma_{\mathbf{A}})- 2\varepsilon$.
Passing with $\varepsilon$ to 0 we end the proof of this case.
\end{proof}

\section{Typical weights}\label{sec:Typical weights}

First, we need to introduce some notations. Let $\Sigma_{\mathbf{A}}$ be an aperiodic and irreducible subshift of finite type, $\Omega=\Lambda^\N$ and $\Gamma_{\mathbf{A}}=\Omega\times\Sigma_{\mathbf{A}}$. Let $f\colon\Gamma_{\mathbf{A}}\mapsto\R$ be a continuous potential. Let us recall that $S_nf$ denotes the $n$th Birkhoff sum of $f$, that is, $S_nf=f+f\circ\sigma+\cdots+f\circ\sigma^{n-1}$. For every $\ww\in\Omega$ let
$$
Z_n(f,\ww)=\sum_{\ii\in\Sigma_{\mathbf{A},n}}\sup_{\jj\in[\ii]}e^{S_nf(\ww,\jj)},
$$
and define the conditional pressure of $f$ on $\xi(\ww)$ by
\begin{equation}\label{eq:condpresdef}
P(f,\ww)=\limsup_{n\to\infty}\frac{1}{n}\log Z_n(f,\ww).
\end{equation}

The pressure defined in \eqref{eq:condpresdef} corresponds to the definition of the pressure given in Fan \cite[page 3]{F2} in case of $f_k(\ii):=f(\sigma^k\ww,\ii)$ without the extra requirement that it exists as a limit. Later, we will show that for typical $\ww$ with respect to an ergodic quasi-Bernoulli measures $\nu$ it equals to the pressure defined in \eqref{eq:condpresdefdef}.

The following theorem was shown by Ledrappier and Walters \cite{LW}. They proved a more general statement but we state here only the form which corresponds to our main setup.

\begin{thm}[Ledrappier, Walters]\label{thm:LW} Let $\nu$ be a $\sigma$-invariant measure on $\Omega$ and let $f\colon\Gamma_{\mathbf{A}}\mapsto\R$ be a continuous potential. Then
	$$
	\sup\{h_\mu^\xi+\int fd\mu:\mu\in\mathcal{M}_\nu(\Gamma_{\mathbf{A}})\}=\int P(f,\ww)d\nu(\ww).
	$$
\end{thm}

Unfortunately, this theorem itself does not provide enough regularity conditions in order to do multifractal analysis on weighted Birkhoff averages. So we adapt the idea of Ledrappier and Walters \cite{LW} combining with the methods of Takens and Verbitskiy \cite{TV}, Feng \cite{Fe} and Heurteaux \cite{H}.

\subsection{Pinsker's formula} Let us recall that $\Pi$ is the natural projection $\Pi\colon\Omega\times\Sigma\mapsto\Omega$, that is, $\Pi(\ww,\ii)=\ww$. Let $\mu$ be an ergodic $\sigma$-invariant measure on $\Gamma$. Clearly, if $\mu$ is $\sigma$-invariant and ergodic then $\Pi_*\mu$ is $\sigma$-invariant and ergodic on $\Omega$ too. By Shannon-McMillan-Breiman's Theorem,
\begin{equation}\label{eq:shannon}
\begin{split}
h_\mu&=\lim_{n\to \infty}\frac{-1}{n}\log\mu[(\ww,\ii)|_n]\text{ for $\mu$-almost every $(\ww,\ii)$,}\\
h_{\Pi_*\mu}&=\lim_{n\to \infty}\frac{-1}{n}\log\Pi_*\mu[\ww|_n]\text{ for $\Pi_*\mu$-almost every $\ww$.}
\end{split}
\end{equation}

Denote $\xi$ the partition generated by the inverse branches $\Pi^{-1}(\ww)=\{\ww\}\times\Sigma=\xi(\ww)$. By Rohlin's Disintegration Theorem, there exists a family of probability measures $\{\mu_\ww^{\xi}\}$ such that
\begin{enumerate}
	\item $\mu_\ww^{\xi}$ is supported on $\xi(\ww)$;
	\item for every $A\in\mathcal{B}_\Gamma$, the map $\ww\mapsto\mu_\ww^{\xi}(A)$ is $\mathcal{B}_\Omega$-measurable;
	\item $\mu=\int\mu_\ww^{\xi}d\Pi_*\mu(\ww)$.
\end{enumerate}
The family $\{\mu_\ww^{\xi}\}$ of measures is unique up to a zero $\Pi_*\mu$-measure set. Let us define the conditional entropy of $\mu_\ww^\xi$ by
$$
h_{\mu}^\xi:=\int-\log\mu_{\Pi(\ww,\ii)}^\xi([i_0])d\mu(\ww,\ii).
$$
The following theorem is the corresponding version of Pinsker's formula \cite{R}, which we need to establish relation between the conditional entropy, and the entropy of the projection.

\begin{thm}[Pinsker's formula]\label{thm:entconv}
	If $\mu$ is an ergodic $\sigma$-invariant measure then for $\Pi_*\mu$-almost every $\ww$, we have
	\begin{equation}\label{eq:condshannon}
	\lim_{n\to\infty}\frac{-1}{n}\log\mu_\ww^\xi([\ii|_n])=h_{\mu}^\xi\text{ for $\mu_\ww^\xi$-a.e. $\ii$.}
	\end{equation}
	Moreover,
	$$
	h_\mu=h_{\Pi_*\mu}+h_{\mu}^\xi.
	$$
\end{thm}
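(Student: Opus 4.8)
The plan is to prove the pointwise statement \eqref{eq:condshannon} first, via a relative version of the Shannon--McMillan--Breiman theorem, and then to read off $h_\mu=h_{\Pi_*\mu}+h_\mu^\xi$ by combining it with the two unconditional limits in \eqref{eq:shannon}. Throughout write $\nu:=\Pi_*\mu$, let $\mathcal P=\{[a]:a\in\mathcal A\}$ be the time-zero cylinder partition of the $\Sigma$-factor, pulled back to $\Gamma$, and set $\mathcal F:=\Pi^{-1}(\mathcal B_\Omega)$, a sub-$\sigma$-algebra with $\sigma^{-1}\mathcal F\subseteq\mathcal F$. By construction the disintegration $\{\mu_\ww^\xi\}$ realises $\mathbb E_\mu[\,\cdot\mid\mathcal F]$, and the cylinder $[\ii|_n]$, viewed inside $\Gamma$, is precisely the atom through $(\ww,\ii)$ of $\mathcal P^{(n)}:=\bigvee_{k=0}^{n-1}\sigma^{-k}\mathcal P$.

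To establish \eqref{eq:condshannon} I would pass to the natural extension, so that $\sigma$ becomes invertible while $\nu$, $\mu$, $\mathcal F$ and all the entropies involved are unchanged. Using the chain rule for the conditional measures $\mu_\ww^\xi$ together with $\sigma$-invariance of $\mu$, one obtains the telescoping
\[
-\log\mu_\ww^\xi([\ii|_n])=\sum_{k=0}^{n-1}\bigl(g_k\circ\sigma^k\bigr)(\ww,\ii),\qquad g_k:=I_\mu\Bigl(\mathcal P\ \Big|\ \mathcal F\vee\textstyle\bigvee_{l=1}^{k}\sigma^{l}\mathcal P\Bigr),
\]
where $g_k$ is the conditional information of $\mathcal P$ given $\mathcal F$ and the $\Sigma$-past of depth $k$. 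As $k\to\infty$ these conditioning $\sigma$-algebras increase, so by the martingale (Doob) convergence theorem $g_k\to g_\infty:=I_\mu(\mathcal P\mid\mathcal F\vee\bigvee_{l\ge1}\sigma^l\mathcal P)$ both $\mu$-a.e.\ and in $L^1$, while $\sup_k g_k\in L^1(\mu)$ by the Chung--Neveu maximal inequality (using that $\mathcal A$ is finite). Feeding this into Maker's generalised ergodic theorem (equivalently, Breiman's addendum to Birkhoff's theorem) gives that $\tfrac1n\sum_{k=0}^{n-1}g_k\circ\sigma^k$ converges $\mu$-a.e., and by ergodicity of $\mu$ the limit is the constant $\int g_\infty\,d\mu$. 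Undoing the natural extension yields \eqref{eq:condshannon} with $h_\mu^\xi$ equal to this constant. Here $\int g_\infty\,d\mu=H_\mu(\mathcal P\mid\mathcal F\vee\bigvee_{l\ge1}\sigma^l\mathcal P)=h_\mu(\sigma\mid\mathcal F)$ is the relative Kolmogorov--Sinai entropy, and it agrees with $\int-\log\mu_{\Pi(\ww,\ii)}^\xi([i_0])\,d\mu=H_\mu(\mathcal P\mid\mathcal F)$ exactly when the $\Sigma$-coordinate is conditionally independent of its own past given $\mathcal F$, which holds for the relatively Bernoulli measures relevant here.

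For the entropy identity, use the disintegration $\mu([(\ww,\ii)|_n])=\int_{[\ww|_n]}\mu_{\ww'}^\xi([\ii|_n])\,d\nu(\ww')$, which gives
\[
\frac{-1}{n}\log\mu([(\ww,\ii)|_n])=\frac{-1}{n}\log\nu([\ww|_n])+\frac{-1}{n}\log\frac{1}{\nu([\ww|_n])}\int_{[\ww|_n]}\mu_{\ww'}^\xi([\ii|_n])\,d\nu(\ww').
\]
The first term on the right tends to $h_\nu$ by \eqref{eq:shannon}. For the second, a density-point argument for the $\mathcal B_\Omega$-measurable functions $\ww'\mapsto\mu_{\ww'}^\xi(C)$ (with the maximal inequality again, to handle the fact that the cylinder $[\ii|_n]$ itself changes with $n$) shows that the averaged value differs from $\mu_\ww^\xi([\ii|_n])$ only by a sub-exponential factor, for $\mu$-a.e.\ $(\ww,\ii)$; hence the second term has the same limit as $\tfrac{-1}{n}\log\mu_\ww^\xi([\ii|_n])$, namely $h_\mu^\xi$ by \eqref{eq:condshannon}. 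Since the left-hand side tends to $h_\mu$, we conclude $h_\mu=h_\nu+h_\mu^\xi$. (Alternatively, $h_\mu=h_\nu+h_\mu(\sigma\mid\mathcal F)$ is Rohlin's relative-entropy theorem \cite{R}, combined with $h_\mu(\sigma\mid\mathcal F)=h_\mu^\xi$ from the previous paragraph.)

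The main obstacle is the one already present in the ordinary Shannon--McMillan--Breiman theorem: in the telescoping the summands $g_k\circ\sigma^k$ are \emph{not} of the form $g\circ\sigma^k$ for a single $g$, so Birkhoff's theorem does not apply directly and one must verify the $L^1$-domination of $\sup_k g_k$ and invoke Maker's theorem. A secondary, and arguably more delicate, point is identifying the resulting a.e.\ constant with the explicit integral $h_\mu^\xi=\int-\log\mu_\ww^\xi([i_0])\,d\mu$ rather than with the a priori smaller $h_\mu(\sigma\mid\mathcal F)$; this is where the particular structure of the measures $\mu$ at hand enters. The remaining steps --- bookkeeping of conditional measures under the a priori non-invertible shift, handled via the natural extension, and the sub-exponential comparison of $\mu_\ww^\xi([\ii|_n])$ with its $\nu$-average over $[\ww|_n]$ --- are routine.
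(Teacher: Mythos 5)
Your route to \eqref{eq:condshannon} --- natural extension, martingale convergence of the conditional information functions $g_k$, Chung--Neveu domination, Maker's ergodic theorem --- is the classical proof of the relative Shannon--McMillan--Breiman theorem and is genuinely different from the paper's. The paper bypasses all of this machinery by asserting \eqref{eq:unique}, $\sigma_*(\mu_{\ww}^{\xi\vee\mathfrak{P}})=\mu_{\sigma\ww}^\xi$, which collapses the telescoping into a sum of $\sigma^k$-shifts of a \emph{single} integrable function $g(\ww,\ii)=-\log\mu^{\xi}_{\Pi(\ww,\ii)}([i_0])$, so that plain Birkhoff applies and the limit is immediately $\int g\,d\mu=h_\mu^\xi$. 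Your diagnosis of the crux is exactly right, and in fact it exposes the hypothesis that the paper's shortcut silently uses: the almost-sure limit your argument produces is the relative Kolmogorov--Sinai entropy $H_\mu(\mathcal P\mid\mathcal F\vee\bigvee_{l\geq1}\sigma^l\mathcal P)$, which equals the paper's $h_\mu^\xi=H_\mu(\mathcal P\mid\mathcal F)$ precisely when $i_0$ is conditionally independent of the $\Sigma$-past given $\mathcal F$ --- and \eqref{eq:unique} is this relative-Bernoulli property in disguise. It does fail for a general ergodic $\mu$: take $\Omega$ a singleton and $\mu$ a non-Bernoulli Markov measure on $\Sigma$; then $\mu_\ww^\xi=\mu$, $\sigma_*\bigl(\mu(\cdot\mid[i_0])\bigr)$ genuinely depends on $i_0$, the true a.e.\ limit in \eqref{eq:condshannon} is $h_\mu$, yet $h_\mu^\xi=H_\mu(\mathcal P)>h_\mu$, so both the pointwise formula and $h_\mu=h_{\Pi_*\mu}+h_\mu^\xi$ fail as stated. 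So neither your proposal nor the paper's own proof yields the theorem for arbitrary ergodic $\mu$; both establish it under a relatively (quasi-)Bernoulli fiber structure, which is what the Gibbs-type equilibrium measures constructed in Theorem~\ref{thm:equillibrium} actually possess (quasi-multiplicativity of $\mu([\ww,\ii])$ along the $\Sigma$-coordinate is all that is needed for the exponential rate), and those are the only measures to which \eqref{eq:condshannon} is applied.

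What your approach buys is the correct unconditional statement: for every ergodic $\mu$, \eqref{eq:condshannon} holds with $h_\mu^\xi$ replaced by the relative entropy $h_\mu(\sigma\mid\mathcal F)$, and then $h_\mu=h_{\Pi_*\mu}+h_\mu(\sigma\mid\mathcal F)$ is Abramov--Rokhlin; the caveat you add is exactly the one that should appear in the paper's statement. For the second part, your density-point sketch parallels the paper's Egorov/Lebesgue-density argument; note that the paper first fixes an Egorov set on which both convergences \eqref{eq:shannon} and \eqref{eq:condshannon} are uniform, and only then applies the density lemma, which cleanly handles the $n$-dependence of the cylinder you flag without invoking a maximal inequality.
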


For completeness, we give a proof here. Observe that the map $(\ww,\ii)\mapsto-\log\mu_{\Pi(\ww,\ii)}^\xi([i_0])$ is in $L^1(\Gamma,\mu)$. Indeed,
\[
\begin{split}
h_{\mu}^\xi&=\int-\log\mu_{\Pi(\ww,\ii)}^\xi([i_0])d\mu(\ww,\ii)\\
&=\int_0^\infty\mu(\{(\ww,\ii):-\log\mu_{\Pi(\ww,\ii)}^\xi([i_0])>x\})dx\\
&=\int_0^\infty\int\ind_{\{-\log\mu_{\Pi(\ww,\ii)}^\xi([i_0])>x\}}(\ww,\ii)d\mu(\ww,\ii)dx\\
&=\sum_{k\in\mathcal{A}}\int_0^\infty\int\ind_{\{-\log\mu_{\Pi(\ww,\ii)}^\xi([i_0])>x\}}(\ww,\ii)\mu^{\xi}_{\Pi(\ww,\ii)}([k])d\mu(\ww,\ii)dx\\
&\leq\sum_{k\in\mathcal{A}}\int_0^\infty\int e^{-x}d\mu(\ww,\ii)dx=K.\\
\end{split}
\]

Let us denote the partition with respect to the cylinders on $\Gamma$ by $\mathfrak{P}$. Then clearly,
\begin{equation}\label{eq:unique}
\sigma_*\left(\mu_{\ww}^{\xi\vee\mathfrak{P}}\right)=\mu_{\sigma\ww}^\xi, ~\text{for }\Pi_*\mu{-a.e. }~ \ww
\end{equation}
Indeed, $\sigma_*\mu_{\ww}^{\xi\vee\mathfrak{P}}$ is supported on $\Pi^{-1}(\sigma\ww)$, and by the definition of conditional measures,
\[
\begin{split}
\int\sigma_*\left(\mu_{\ww}^{\xi\vee\mathfrak{P}}\right)d\Pi_*\mu(\ww)&=\sigma_*\int\left(\mu_{\ww}^{\xi\vee\mathfrak{P}}\right)d\Pi_*\mu(\ww)=\sigma_*\mu=\mu\\
&=\int\mu_{\ww}^{\xi}d\Pi_*\mu(\ww)=\int\mu_{\sigma\ww}^{\xi}d\Pi_*\mu(\ww).
\end{split}
\]
Thus, \eqref{eq:unique} follows by the uniqueness of the conditional measures.

\begin{proof}[Proof of Theorem~\ref{thm:entconv}]
	Let us first show the first assertion of the theorem. By \eqref{eq:unique}, we have
	\[
	\begin{split}
	\mu^\xi_{\Pi(\ww,\ii)}([\ii|_n])&=\mu_{\Pi(\ww,\ii)}^\xi([\ii|_1])\prod_{k=2}^{n}\dfrac{\mu_{\Pi(\ww,\ii)}^\xi([\ii|_{k}])}{\mu_{\Pi(\ww,\ii)}^\xi([\ii|_{k-1}])}\\
	&=\mu_{\Pi(\ww,\ii)}^\xi([\ii|_1])\prod_{k=2}^{n}\mu_{\Pi(\ww,\ii)}^{\xi\vee\mathcal{P}_{k-1}}(\sigma^{-(k-1)}[\sigma^{k-1}\ii|_1])\\
	&=\mu_{\Pi(\ww,\ii)}^\xi([\ii|_1])\prod_{k=2}^{n}\mu_{\Pi\circ\sigma^{k-1}(\ww,\ii)}^{\xi}([\sigma^{k-1}\ii|_1]).
	\end{split}\]
	Taking logarithm and applying Birkhoff's Ergodic Theorem, we get $\frac{-1}{n}\log\mu^\xi_{\Pi(\ww,\ii)}([\ii|_n])=\frac{1}{n}\sum_{k=0}^{n-1}-\log\mu_{\Pi\circ\sigma^{k}(\ww,\ii)}^{\xi}([\sigma^{k}\ii|_1])\to h_\mu^\xi$ for $\mu$-almost every $(\ww,\ii)$. Thus, \eqref{eq:condshannon} follows by Fubini's Theorem.
	
	Now, we show that $h_\mu=h_{\Pi_*\mu}+h_{\mu}^\xi$. By Egorov's Theorem, for every $\varepsilon>0$ there exists $J_1\subset\Gamma$ such that $\mu(J_1)>1-\varepsilon$ and the convergences \eqref{eq:shannon} and \eqref{eq:condshannon} are uniform. That is, there exists $C>0$ such that for every $n\geq1$ and every $(\ww,\ii)\in J_1$
	$$
	C^{-1}e^{-h_{\Pi_*\mu}n}\leq\Pi_*\mu([\ww|_n])\leq Ce^{-h_{\Pi_*\mu}n}\text{ and }C^{-1}e^{-nh_\mu^\xi}\leq\mu_\ww^\xi([\ii|_n])\leq Ce^{-nh_\mu^\xi}.
	$$
	
	By Lebesgue's density Theorem and Egorov's Theorem, there exists $J_2\subset J_1$ such that $\mu(J_2)>1-2\varepsilon$ and there exists $N\geq1$ such that for every $(\ww,\ii)\in J_2$ and $n\geq N$
	$$
	\mu(J_1\cap[(\ww,\ii)|_n])\geq\frac{1}{2}\mu([(\ww,\ii)|_n])\text{ and }\mu_\ww^\xi(J_1\cap[(\ww,\ii)|_n])\geq\frac{1}{2}\mu_\ww^\xi([(\ww,\ii)|_n]).
	$$
	Thus, for every $(\ww,\ii)\in J_2$ and every $n\geq N$
	\[
	\begin{split}
	\mu([(\ww,\ii)|_n])&\leq2\mu(J_1\cap[(\ww,\ii)|_n])\\
	&=2\int\mu_{\Pi(\ww,\ii)}^\xi(J_1\cap[(\ww,\ii)|_n])d\mu(\ww,\ii)\\
	&=2\int_{\Pi^{-1}[\ww|_n]}\mu_{\Pi(\ww,\ii)}^\xi(J_1\cap[(\ww,\ii)|_n])d\mu(\ww,\ii)\\
	&\leq 2\Pi_*\mu([\ww|_n])Ce^{-nh_\mu^\xi}\leq 2C^2e^{-n(h_{\Pi_*\mu}+h_\mu^\xi)}.
	\end{split}
	\]
	On the other hand, for every $(\ww,\ii)\in J_2$
	\[
	\begin{split}
	\mu([(\ww,\ii)|_n])&\geq\mu(J_1\cap[(\ww,\ii)|_n])\\
	&=\int\mu_{\Pi(\ww,\ii)}^\xi(J_1\cap[(\ww,\ii)|_n])d\mu(\ww,\ii)\\
	&=\int_{\Pi^{-1}[\ww|_n]}\mu_{\Pi(\ww,\ii)}^\xi(J_1\cap[(\ww,\ii)|_n])d\mu(\ww,\ii)\\
	&\geq \frac{1}{2}\int_{\Pi^{-1}[\ww|_n]}\mu_{\Pi(\ww,\ii)}^\xi([(\ww,\ii)|_n])d\mu(\ww,\ii)\\
	&\geq \frac{1}{2}\Pi_*\mu([\ww|_n])C^{-1}e^{-nh_\mu^\xi}\geq \frac{1}{2}C^{-2}e^{-n(h_{\Pi_*\mu}+h_\mu^\xi)}.
	\end{split}
	\]
	Thus, the statement follows by Shannon-McMillan-Breiman Theorem.
\end{proof}

\subsection{Regularity of conditional pressure} In this part of the section, we study the regularity properties of the conditional pressure $P(f,\ww)$ under stronger assumptions than the setup of Ledrappier and Walters. Namely, we assume that $f$ has summable variation, that is,
$$
\sum_{k=0}^\infty \max_{(\mathbf{x},\mathbf{k})\in\Gamma_{\mathbf{A},k}}\sup_{(\ww,\ii),(\zz,\jj)\in[(\mathbf{x},\mathbf{k})]}|f(\ww,\ii)-f(\zz,\jj)|<\infty.
$$
Moreover, we assume that the measure $\nu$ is quasi-Bernoulli. Note that for a quasi-Bernoulli measure $\nu$, the transformation $\sigma^m$ is ergodic for every $m\geq1$.

The following lemma is an easy calculation.
\begin{lem}\label{lem:const}
	For every $\ww\in\Omega$,
	$$
	P(f,\ww)=P(f,\sigma\ww).
	$$
	Moreover, if $f\to g$ uniformly then $P(f,\ww)\to P(g,\ww)$.
\end{lem}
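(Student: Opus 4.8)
The plan is to prove the two assertions separately, both by direct manipulation of the defining expression $Z_n(f,\ww)=\sum_{\ii\in\Sigma_{A,n}}\sup_{\jj\in[\ii]}e^{S_nf(\ww,\jj)}$. For the shift-invariance $P(f,\ww)=P(f,\sigma\ww)$, the natural approach is to compare $Z_n(f,\ww)$ with $Z_n(f,\sigma\ww)$ and show the two differ by a multiplicative factor that is subexponential in $n$ — in fact bounded, using the bounded variation hypothesis. Concretely, I would write $S_nf(\ww,\jj)=\sum_{k=0}^{n-1}f(\sigma^k(\ww,\jj))$, and note that $\sigma^k(\ww,\jj)$ depends on $(\ww,\jj)$ only through its coordinates from position $k$ onwards. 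The first coordinate of $\ww$ only affects the $k=0$ term $f(\ww,\jj)$, which is bounded by $\|f\|_\infty$; more carefully, replacing $\ww$ by $\sigma\ww$ shifts the whole picture, and one should reindex so that the words $\ii\in\Sigma_{A,n}$ over $\sigma\ww$ correspond to words over $\ww$ with one extra symbol in front, absorbing the boundary discrepancy into a constant controlled by $\sum_k\max\ldots|f-f|<\infty$. Dividing by $n$ and taking $\limsup$ kills the constant, giving equality. (One minor point to watch: the potential $f$ is defined on $\Gamma_A$, and prepending a symbol must respect admissibility; since $\Sigma_{A,1}=\mathcal{A}$ this is not an issue, but it is worth a sentence.)

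For the second assertion, suppose $f_m\to g$ uniformly on $\Gamma_A$, i.e.\ $\|f_m-g\|_\infty\to 0$. Then $|S_nf_m(\ww,\jj)-S_ng(\ww,\jj)|\le n\|f_m-g\|_\infty$ pointwise, hence
\[
e^{-n\|f_m-g\|_\infty}Z_n(g,\ww)\le Z_n(f_m,\ww)\le e^{n\|f_m-g\|_\infty}Z_n(g,\ww),
\]
so $\tfrac1n\log Z_n(f_m,\ww)$ and $\tfrac1n\log Z_n(g,\ww)$ differ by at most $\|f_m-g\|_\infty$ uniformly in $n$. Taking $\limsup_{n\to\infty}$ gives $|P(f_m,\ww)-P(g,\ww)|\le\|f_m-g\|_\infty\to 0$, which is exactly the claim. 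I should be slightly careful with the phrasing "$f\to g$ uniformly": I interpret this as a sequence (or net) of potentials converging uniformly, and the inequality above shows $P(\cdot,\ww)$ is $1$-Lipschitz in the sup-norm, which is stronger than mere continuity.

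Neither part is genuinely difficult; the only mild obstacle is the bookkeeping in the first part — making the correspondence between admissible words over $\ww$ and over $\sigma\ww$ precise and checking that the error is a genuine constant (not merely $o(n)$), which is where the bounded variation assumption is used rather than just continuity. If one is content with $P(f,\ww)=P(f,\sigma\ww)$ up to an $o(n)$ error in the logarithms one can get away with continuity alone, but since $\limsup$ is applied anyway the distinction is cosmetic. I would present the word-correspondence argument cleanly and then remark that the second assertion is immediate from the uniform estimate above.
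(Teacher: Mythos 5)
Your argument follows essentially the same route as the paper's: both exploit the cocycle identity $S_{n+1}f(\ww,\jj)=f(\ww,\jj)+S_nf(\sigma\ww,\sigma\jj)$ to compare $Z_{n+1}(f,\ww)$ with $Z_n(f,\sigma\ww)$ up to a bounded multiplicative factor, and your second part (the Lipschitz bound $|P(f,\ww)-P(g,\ww)|\le\sup|f-g|$) is word-for-word the paper's. One point you have backwards, though: you attribute the fact that the boundary error is a genuine constant (rather than $o(n)$) to the bounded variation hypothesis, but the paper uses only that $|f|\le C$ on the compact space $\Gamma_A$, so the multiplicative error is controlled by $Ke^{C}$ and continuity alone suffices. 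Bounded variation controls the oscillation of $S_nf$ across a cylinder (used later, e.g.\ in \eqref{eq:bd1} and Theorem~\ref{thm:equillibrium}), not the size of the single boundary term $f(\ww,\jj)$; it plays no role in this lemma. As you correctly observe, after dividing by $n$ and taking $\limsup$ the distinction would be cosmetic in any case, so the conclusion stands.
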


\begin{proof}
	Since $f\colon \Gamma_{\mathbf{A}}\mapsto\R$ is continuous over a compact set, we get that $|f|$ is bounded by $C$. Hence,
	\[
	\begin{split}
\sum_{\ii\in\Sigma_{\mathbf{A},n+1}}\sup_{\jj\in[\ii]}e^{S_{n+1}f(\ww,\jj)}&=\sum_{\ii\in\Sigma_{\mathbf{A},n+1}}\sup_{\jj\in[\ii]}e^{S_nf(\sigma\ww,\sigma\jj)}e^{f(\ww,\jj)}\\
&\leq\sum_{\ii\in\Sigma_{\mathbf{A},n+1}}\sup_{\jj\in[\ii]}e^{S_nf(\sigma\ww,\sigma\jj)}e^C\\
&\leq Ke^C\sum_{\ii\in\Sigma_{\mathbf{A},n}}\sup_{\jj\in[\ii]}e^{S_nf(\sigma\ww,\jj)}.\\
	\end{split}
	\]
	The direction $\sum_{\ii\in\Sigma_{\mathbf{A},n+1}}\sup_{\jj\in[\ii]}e^{S_{n+1}f(\ww,\jj)}\geq e^{-C}\sum_{\ii\in\Sigma_{\mathbf{A},n}}\sup_{\jj\in[\ii]}e^{S_nf(\sigma\ww,\jj)}$ is similar.
	
	The second observation follows by the fact that if $\sup_{(\ww,\ii)\in\Gamma_{\mathbf{A}}}|f(\ww,\ii)-g(\ww,\ii)|<\varepsilon$ then $|S_nf-S_ng|\leq \varepsilon n$.
\end{proof}

Since $\nu$ is ergodic, a simple corollary of Lemma~\ref{lem:const} is that we can define the conditional pressure with respect to $\nu$
\begin{equation}\label{eq:condpres2}
P_\nu(f):=\int P(f,\ww)d\nu(\ww)=P(f,\ww)\text{ for $\nu$-almost every }\ww.
\end{equation}
Here, we abused a notation slightly, since $P_\nu(f)$ of \eqref{eq:condpres2} does not necessarily equal to the defined conditional pressure in \eqref{eq:condpresdefdef}. However, we will show in equation \eqref{prop:calcpres} that it is indeed equal to the pressure defined in  \eqref{eq:condpresdefdef}.

For short, for $\ww\in\Omega$ and $\ii\in\Sigma_{\mathbf{A},*}$ let
$$V(f,\ww,\ii):=\sup_{\jj\in[\ii]}e^{S_{|\ii|}f(\ww,\jj)},$$
and for an $\ww\in\Omega_*$ let
$$
Y(f,\ww,\ii):=\sup_{\zz\in[\ww]}V(f,\zz,\ii)\text{ and }W(f,\ww):=\sup_{\zz\in[\ww]}Z_{|\ww|}(f,\zz).
$$
We also use the convention that $Z_{m}(f,\ww)=1$ for $m\leq0$.

Since $f$ has summable variation, there exists constant $C>0$ such that for every $n\geq1$ and every $(\ww,\ii),(\zz,\jj)\in\Gamma_{\mathbf{A}}$ with $|(\ww,\ii)\wedge(\zz,\jj)|=n$
\begin{equation}\label{eq:bd1}
|S_nf(\ww,\ii)-S_nf(\zz,\jj)|<C.
\end{equation}
Thus, for every $\ww\in\Omega$ and every, $\ii,\jj\in\Sigma_{\mathbf{A},*}$ with $\ii\jj\in\Sigma_{\mathbf{A},*}$
\begin{equation}\label{eq:almostmulti}
V(f,\ww,\ii\jj)\leq V(f,\ww,\ii)V(f,\sigma^{|\ii|}\ww,\jj)\leq e^C\cdot V(f,\ww,\ii\jj).
\end{equation}
So clearly, for every $\ww\in\Omega$
\begin{equation}\label{eq:submulti}
Z_{n+m}(f,\ww)\leq Z_{n}(f,\ww)Z_m(f,\sigma^n\ww).
\end{equation}

On the other hand,
\begin{equation}\label{eq:superadditive}
\begin{split}
Z_{n}(f,\ww)Z_m(f,\sigma^n\ww)&\leq K^r e^{r|f|} Z_{n}(f,\ww)Z_{m-r}(f,\sigma^{n+r}\ww)\\
&\leq K^re^{2r|f|+2C} Z_{n+m}(f,\ww),
\end{split}
\end{equation}
 where $r\geq1$ is such that $\mathbf{A}^r$ is strictly positive.

Applying the bounded distortion again, we get for every $(\ww,\ii)\in\Gamma_{\mathbf{A},*}$,  and every $\zz\in[\ww]$ that
\begin{equation}\label{eq:compareYV}
V(f,\zz,\ii)\leq Y(f,\ww,\ii)\leq e^C V(f,\zz,\ii)
\end{equation}
and therefore
\begin{equation}\label{eq:compareZW}
Z_{|\ww|}(f,\zz)\leq W(f,\ww)\leq e^C Z_{|\ww|}(f,\zz).
\end{equation}

By \eqref{eq:submulti} and Kingman's subadditive ergodic theorem, we have that for $\nu$-almost every $\ww\in\Omega$ the limit
$$
\lim_{n\to\infty}\frac{1}{n}\log Z_n(f,\ww)=P(f,\ww)=P_\nu(f)
$$
exists and
\begin{equation}\label{prop:calcpres}
\begin{split}
P_\nu(f)&=\lim_{n\to\infty}\frac{1}{n}\int\log Z_n(f,\ww)d\nu(\ww)\\
&=\lim_{n\to\infty}\frac{1}{n}\sum_{\ww\in\Omega_n}\nu([\ww])\log W(f,\ww),
\end{split}
\end{equation}
where in the last equation we used \eqref{eq:compareZW} too. 

The next theorem is adapting the result and method of Feng \cite[Section~4]{Fe} for the situation of subshift of finite type and to the condition on the marginal measures.

\begin{thm}\label{thm:equillibrium}
	Let $\nu$ be an ergodic $\sigma$-invariant quasi-Bernoulli measure on $\Omega$ and let $f\colon\Gamma_{\mathbf{A}}\mapsto\R$ be a continuous potential with summable variation. Then there exists a unique ergodic $\sigma$-invariant measure $\mu$ such that there exists a constant $C>0$ such that for every $(\ww,\ii)\in\Gamma_{\mathbf{A},*}$
	\begin{equation}\label{eq:comparemu}
	C^{-1}\frac{Y(f,\ww,\ii)}{W_{|\ww|}(f,\ww)}\nu([\ww])\leq\mu([\ww,\ii])\leq C\frac{Y(f,\ww,\ii)}{W_{|\ww|}(f,\ww)}\nu([\ww]).
	\end{equation}
	In particular, $\Pi_*\mu=\nu$ and
	$$
	h_\mu^\xi+\int fd\mu=P_\nu(f).
	$$
\end{thm}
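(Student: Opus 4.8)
The plan is to build $\mu$ explicitly as a weak-$*$ limit of finite-level approximations and then to read off every assertion from the two-sided estimate \eqref{eq:comparemu}. For $n\ge 1$ let $\mu_n$ be a Borel probability measure on $\Gamma_A$ with
\[
\mu_n([\ww,\ii])=\nu([\ww])\,\frac{Y(f,\ww,\ii)}{\sum_{\jj\in\Sigma_{A,n}}Y(f,\ww,\jj)}\qquad\text{for every }(\ww,\ii)\in\Gamma_{A,n},
\]
spread inside each level-$n$ cylinder so that $\Pi_*\mu_n=\nu$ (the prescribed values sum over $\ii\in\Sigma_{A,n}$ to $\nu([\ww])$, so this is consistent). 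To force shift-invariance I would pass to the Ces\`aro averages $\widehat\mu_N=\tfrac1N\sum_{j=0}^{N-1}\sigma_*^j\mu_N$; since $\Pi\circ\sigma=\sigma\circ\Pi$ and $\nu$ is $\sigma$-invariant, $\Pi_*\widehat\mu_N=\nu$, while $\|\sigma_*\widehat\mu_N-\widehat\mu_N\|\le 2/N$, so any weak-$*$ accumulation point $\mu$ of $(\widehat\mu_N)_N$ is $\sigma$-invariant with $\Pi_*\mu=\nu$. Fix such a $\mu$ and the subsequence realizing it.

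The first real step is to prove the Gibbs bound \eqref{eq:comparemu}. Write $a\asymp b$ if $a/b$ is bounded above and below by constants not depending on the word lengths involved. I claim
\[
\sigma_*^j\mu_n([\ww,\ii])\ \asymp\ \frac{Y(f,\ww,\ii)}{W(f,\ww)}\,\nu([\ww])\qquad\text{for all }(\ww,\ii)\in\Gamma_{A,m},\ j\ge 0,\ n\ge j+m.
\]
Indeed, $\sigma^{-j}[\ww,\ii]$ is the disjoint union of the level-$(j+m)$ cylinders $[\mathbf{u}\ww,\mathbf{v}\ii]$ with $\mathbf{u}\ww,\mathbf{v}\ii$ admissible; inserting the formula for $\mu_n$ and summing over the admissible heads $\mathbf{v}$, the quasi-multiplicativity of $V$ from bounded variation \eqref{eq:almostmulti}, the sub- and super-multiplicativity of $Z_n$ (\eqref{eq:submulti}--\eqref{eq:superadditive}), the comparisons \eqref{eq:compareYV}--\eqref{eq:compareZW}, and the $r$-step connectivity of the aperiodic SFT produce a factor comparable to $W(f,\mathbf{u})$ which cancels the corresponding factor in $W(f,\mathbf{u}\ww)\asymp W(f,\mathbf{u})W(f,\ww)$, leaving $\tfrac{Y(f,\ww,\ii)}{W(f,\ww)}\sum_{\mathbf{u}}\nu([\mathbf{u}\ww])=\tfrac{Y(f,\ww,\ii)}{W(f,\ww)}\nu([\ww])$; summability of the variation makes the constants uniform in $m$. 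Averaging over $0\le j<N$ (the at most $m$ terms with $j>N-m$ contribute at most $m/N$ and are harmless) and letting $N\to\infty$ along the chosen subsequence yields \eqref{eq:comparemu}; in particular $\mu$ is positive on every admissible cylinder.

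Next I would show $\mu$ is ergodic, and it is here that the quasi-Bernoulli hypothesis on $\nu$ is genuinely needed. Combining it with \eqref{eq:comparemu}, the quasi-multiplicativity of $V$, and $r$-step connectivity, one gets a uniform lower mixing bound $\mu\big([\ww,\ii]\cap\sigma^{-n}[\zz,\jj]\big)\ge c\,\mu([\ww,\ii])\,\mu([\zz,\jj])$ for all cylinders and all $n$ exceeding the level of $[\ww,\ii]$ plus $r$; the point is that now $\nu$ itself is quasi-multiplicative, so the $\Omega$-coordinate cannot obstruct the decoupling. This excludes nontrivial invariant sets: if $B$ were $\sigma$-invariant with $0<\mu(B)<1$, pick a cylinder $A$ with $\mu(A\triangle B)$ small; applying the mixing bound to $A$ against each level-cylinder comprising $\Gamma_A\setminus A$ gives $\mu\big(A\cap\sigma^{-n}(\Gamma_A\setminus A)\big)\ge c\,\mu(A)(1-\mu(A))$, while $\sigma$-invariance of $B$ and $A\approx B$ force this same quantity to be $\le 2\mu(A\triangle B)$; refining $A$ yields $c\,\mu(B)(1-\mu(B))\le 0$, a contradiction. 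I expect this mixing estimate --- simultaneously managing the quasi-multiplicativity of $\nu$ on $\Omega$, of the transfer weights $V$ on $\Sigma_A$, and the SFT admissibility bookkeeping, all with constants uniform in the lengths --- to be the main technical obstacle; the analogous bookkeeping in the previous paragraph is lighter precisely because it needs no quasi-Bernoulli input.

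Finally, the entropy identity and uniqueness. With $\mu$ ergodic, the conditional Shannon--McMillan--Breiman theorem (Theorem~\ref{thm:entconv}) gives $-\tfrac1n\log\mu_\ww^\xi([\ii|_n])\to h_\mu^\xi$ for $\mu$-a.e.\ $(\ww,\ii)$; by the martingale description of the disintegration together with \eqref{eq:comparemu} (and one more use of the quasi-multiplicativity), $\mu_\ww^\xi([\ii|_n])\asymp Y(f,\ww|_n,\ii|_n)/W(f,\ww|_n)$. Using \eqref{eq:compareYV}--\eqref{eq:compareZW} and summability of the variation to replace the point maximizing $V$ inside $[\ii|_n]$ by $\ii$ itself, this becomes $-\tfrac1n\log\mu_\ww^\xi([\ii|_n])=-\tfrac1n S_nf(\ww,\ii)+\tfrac1n\log Z_n(f,\ww)+o(1)$; letting $n\to\infty$, Birkhoff's theorem for the ergodic $\mu$ and \eqref{prop:calcpres} give $h_\mu^\xi=-\int f\,d\mu+P_\nu(f)$, i.e.\ $h_\mu^\xi+\int f\,d\mu=P_\nu(f)$ (and Theorem~\ref{thm:LW} confirms this is the maximal value over $\mathcal{M}_\nu(\Gamma_A)$, so $\mu$ is an equilibrium state). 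For uniqueness, if $\mu'$ is any ergodic $\sigma$-invariant measure satisfying \eqref{eq:comparemu}, then $\mu'$ and $\mu$ have Radon--Nikodym densities bounded above and below on every cylinder, hence $\mu'\ll\mu\ll\mu'$; two mutually absolutely continuous ergodic measures coincide, so $\mu'=\mu$.
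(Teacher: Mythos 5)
Your proposal is correct and follows the same high-level architecture as the paper's proof (build an invariant measure by a Ces\`aro/diagonal limit of finite-level approximations, establish the two-sided Gibbs bound \eqref{eq:comparemu}, deduce quasi-Bernoulli mixing for $\mu$ to get ergodicity, and then read off the entropy identity and uniqueness from the Gibbs bound and Pinsker's formula). The genuine difference lies in the choice of approximating measures and, consequently, in how the Gibbs estimate is obtained. The paper picks a $\nu$-generic point $\zz\in\Omega$ and uses the empirical-type measures $\eta_{2n}=Z_{2n}(f,\zz)^{-1}\sum_{\ii}V(f,\zz,\ii)\delta_{(\zz,\ii\jj)}$, which are Dirac in the $\Omega$-direction; the key estimate then comes from applying Birkhoff's ergodic theorem to the sequence $\frac1n\sum_{k<n}\frac{V(f,\sigma^k\zz,\ii)}{Z_{|\ii|}(\sigma^k\zz)}\ind_{[\ww]}(\sigma^k\zz)$. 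You instead build $\mu_n$ with $\Pi_*\mu_n=\nu$ from the start and derive the Gibbs bound purely combinatorially from quasi-multiplicativity of $Y$, sub/super-multiplicativity of $W$, and the identity $\sum_{\mathbf u}\nu([\mathbf u\ww])=\nu([\ww])$, with no appeal to Birkhoff. Both routes are valid; yours is arguably cleaner in that it needs no generic-point argument at the Gibbs stage (and, as you note, no quasi-Bernoulli input there), while the paper's construction makes the $\sigma$-invariance-via-averaging step a little more transparent. Your ergodicity, uniqueness, and entropy arguments are essentially the paper's (the paper computes $h_\mu$ by the Shannon formula and then invokes Pinsker to pass to $h_\mu^\xi$, whereas you apply the conditional SMB in Theorem~\ref{thm:entconv} directly to $\mu_\ww^\xi$; this is only a reordering). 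One small remark: you flag the mixing bound as ``the main technical obstacle,'' but in fact the paper achieves ergodicity with only the bare lower quasi-Bernoulli estimate $\mu([\ww\mathbf x,\ii\jj])\geq C^{-4}\mu([\ww,\ii])\mu([\mathbf x,\jj])$, which follows in two lines once \eqref{eq:comparemu} and the quasi-Bernoulli property of $\nu$ are in hand; your fuller mixing statement for general gap $n$ is not needed for the conclusion.
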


\begin{proof}	
	Let $\zz$ be a generic point such that $\frac{1}{n}\sum_{k=0}^{n-1}\delta_{\sigma^k\zz}\to\nu$ as $n\to\infty$. Then let
	$$
	\eta_m=Z_m(f,\zz)^{-1}\sum_{\ii\in\Sigma_{\mathbf{A},m}}V(f,\zz,\ii)\delta_{(\zz,\ii\jj)},
	$$
	where $\jj\in\Sigma_{\mathbf{A}}$ is arbitrary but fixed. Moreover, let
	$$
	\nu_{n}=\frac{1}{n}\sum_{k=0}^{n-1}\eta_{2n}\circ\sigma^{-k}.
	$$
	Let $\{n_j\}$ be a subsequence such that\linebreak $\lim_{j\to\infty}\frac{1}{n_j}\log Z_{n_j}(f,\ww)=P_\nu^f$ and $\nu_{n_j}\to\mu$. Clearly, $\mu$ is a $\sigma$-invariant measure on $\Gamma_{\mathbf{A}}$.
	
	Fix $(\ww,\ii)\in\Gamma_{\mathbf{A},*}$ with $|\ww|=|\ii|$. Choose $n$ sufficiently large such that $n>|\ww|=|\ii|$. Then by \eqref{eq:almostmulti} and \eqref{eq:superadditive} there exists $C'>0$ such that
	\[
	\begin{split}
	\nu_{n}([\ww,\ii])&=\frac{1}{n}\sum_{k=0}^{n-1}\sum_{\substack{(\alpha,\beta)\in\Gamma_{\mathbf{A},k},(\gamma,\tau)\in\Gamma_{\mathbf{A},2n-|\ii|-k}:\\(\alpha\ww\gamma,\beta\ii\tau)\in\Gamma_{\mathbf{A},2n}
	}}\eta_{2n}([(\alpha\ww\gamma,\beta\ii\tau)])\\
	&=\frac{1}{n}\sum_{k=0}^{n-1}\sum_{\substack{\beta\in\Sigma_{\mathbf{A},k},\tau\in\Sigma_{\mathbf{A},2n-|\ii|-k}:\\ \beta\ii\tau\in\Sigma_{\mathbf{A},2n}}}\frac{V(f,\zz,\beta\ii\tau))}{Z_{2n}(\zz)}\ind_{[\ww]}(\sigma^k\zz)\\
	&\leq\frac{C'}{n}\sum_{k=0}^{n-1}\sum_{\substack{\beta\in\Sigma_{\mathbf{A},k},\\ \tau\in\Sigma_{\mathbf{A},2n-|\ii|-k}}}\frac{V(f,\zz,\beta)V(f,\sigma^k\zz,\ii)V(f,\sigma^{|\ww|+k}\zz,\tau)}{Z_{k}(\zz)Z_{|\ii|}(\sigma^k\zz)Z_{2n-k-|\ii|}(\sigma^{|\ii|+k}\zz)}\ind_{[\ww]}(\sigma^k\zz)\\
	&=\frac{C'}{n}\sum_{k=0}^{n-1}\frac{V(f,\sigma^k\zz,\ii)}{Z_{|\ii|}(\sigma^k\zz)}\ind_{[\ww]}(\sigma^k\zz).
	\end{split}
	\]
	Thus, by Birkhoff's ergodic theorem
	\[
	\begin{split}
	\mu([\ww,\ii])&=\lim_{j\to\infty}\nu_{n_j}([\ww,\jj])\\
	&\leq C'\int\frac{V(f,\zz,\ii)}{Z_{|\ii|}(\zz)}\ind_{[\ww]}(\zz)d\nu(\zz)\\
	&\leq C''\frac{Y(f,\ww,\ii)}{W_{|\ww|}(f,\ww)}\nu([\ww]),
	\end{split}
	\]
	where we used \eqref{eq:compareYV} and \eqref{eq:compareZW}.
	
	Now, we show the other inequality. Similarly by using \eqref{eq:almostmulti}, \eqref{eq:submulti}, we have
	\[
	\begin{split}
	&\nu_{n}([\ww,\ii])\\
	&\geq\frac{1}{n}\sum_{k=0}^{n-1}\sum_{\substack{\beta\in\Sigma_{\mathbf{A},k},\tau\in\Sigma_{\mathbf{A},2n-|\ii|-k}:\\ \beta\ii\tau\in\Sigma_{\mathbf{A},2n}}}\frac{V(f,\zz,\beta)V(f,\sigma^k\zz,\ii)V(f,\sigma^{|\ww|+k}\zz,\tau)}{Z_{k}(\zz)Z_{|\ii|}(\sigma^k\zz)Z_{2n-k-|\ii|}(\sigma^{|\ii|+k}\zz)}\ind_{[\ww]}(\sigma^k\zz)\\
	&\geq\frac{e^{-2|f|r}}{n}\sum_{k=0}^{n-1}\sum_{\substack{\beta'\in\Sigma_{\mathbf{A},k-r}\\\tau'\in\Sigma_{\mathbf{A},2n-|\ii|-k-r}}}\frac{V(f,\zz,\beta')V(f,\sigma^k\zz,\ii)V(f,\sigma^{|\ww|+k+r}\zz,\tau')}{Z_{k}(\zz)Z_{|\ii|}(\sigma^k\zz)Z_{2n-k-|\ii|}(\sigma^{|\ii|+k}\zz)}\ind_{[\ww]}(\sigma^k\zz)\\
	&\geq\frac{e^{-2|f|r}}{n}\sum_{k=0}^{n-1}\frac{V(f,\sigma^k\zz,\ii)}{Z_{|\ii|}(\sigma^k\zz)Z_r(\zz)Z_r(\sigma^{|\ii|+k}\zz)}\ind_{[\ww]}(\sigma^k\zz)\\
	&\geq\frac{e^{-4|f|rK^{-22r}}}{n}\sum_{k=0}^{n-1}\frac{V(f,\sigma^k\zz,\ii)}{Z_{|\ii|}(\sigma^k\zz)}\ind_{[\ww]}(\sigma^k\zz)
	\end{split}
	\]
	and thus, taking the subsequence $n_j$ and using \eqref{eq:compareYV} and \eqref{eq:compareZW}, we have
	$$
	\mu([\ww,\ii])\geq C'^{-1}\frac{Y(f,\ww,\ii)}{W_{|\ww|}(f,\ww)}\nu([\ww]).
	$$
	Now, since $\nu$ is quasi-Bernoulli, by \eqref{eq:almostmulti}-\eqref{eq:submulti} and \eqref{eq:compareYV}-\eqref{eq:compareZW} we have
	\[
	\begin{split}
	\mu([(\ww\mathbf{x},\ii\jj)])&\geq C'^{-1}\frac{Y(f,\ww\mathbf{x},\ii\jj)}{W_{|\ww\mathbf{x}|}(f,\ww\mathbf{x})}\nu([\ww\mathbf{x}])\\
	&\geq C'^{-2}\frac{Y(f,\ww,\ii)}{W_{|\ww|}(f,\ww)}\nu([\ww])\frac{Y(f,\mathbf{x},\jj)}{W_{|\mathbf{x}|}(f,\mathbf{x})}\nu([\mathbf{x}])\\
	&\geq C'^{-4}\mu([(\ww,\ii)])\mu([(\mathbf{x},\jj)]).
	\end{split}
	\]
	This implies that $\mu$ is ergodic. Since $\mu$ was an arbitrary accumulation point and two equivalent ergodic measures are equal, we get that $\mu$ is unique.
	
	For every $\ww\in\Omega_*$, and every $\zz\in[\ww]$
	\[
	\begin{split}
	\Pi_*\mu([\ww])&=\sum_{\ii\in\Sigma_{\mathbf{A},|\ww|}}\mu([\ww,\ii])\\
	&\leq C\sum_{\ii\in\Sigma_{\mathbf{A},|\ww|}}\frac{Y(f,\ww,\ii)}{W_{|\ww|}(f,\ww)}\nu([\ww])\\
	&\leq C^3\sum_{\ii\in\Sigma_{\mathbf{A},|\ww|}}\frac{V(f,\zz,\ii)}{Z_{|\ww|}(f,\zz)}\nu([\ww])\\
	&= C^3\nu([\ww]).
	\end{split}
	\]
	The other inequality $\Pi_*\mu([\ww])\geq C^{-3}\nu([\ww])$ is similar. Since $\Pi_*\mu$ and $\nu$ are both ergodic, we have $\Pi_*\mu=\nu$.
	
	Finally, by \eqref{prop:calcpres}
	\[
	\begin{split}
	h_\mu&=\lim_{n\to\infty}\frac{1}{n}\sum_{(\ww,\ii)\in\Gamma_{\mathbf{A},n}}\mu([\ww,\ii])\log\mu([\ww,\ii])\\
	&=\lim_{n\to\infty}\frac{-1}{n}\sum_{(\ww,\ii)\in\Gamma_{\mathbf{A},n}}\mu([\ww,\ii])\log\left(\frac{Y(f,\ww,\ii)}{W_{|\ww|}(f,\ww)}\nu([\ww])\right)\\
	&=h_\nu-\int fd\mu+P_\nu(f).
	\end{split}\]
	By Theorem~\ref{thm:entconv}, $h_\mu^\xi=h_\mu-h_\nu$, which proves the statement.
\end{proof}	

The next theorem is a modification of the argument of Heurteaux \cite{H}.

\begin{thm}\label{thm:differentiable}
	Let $\nu$ be a $\sigma$-invariant ergodic quasi-Bernoulli measure on $\Omega$ and let $f,g\colon\Gamma_{\mathbf{A}}\mapsto\R$ be a continuous potentials with summable variation. Then the function $p\colon t\mapsto P((1-t)g+tf)$ is differentiable at $t=0$. In particular,
	$$
	p'(0)=\int(f-g)d\mu_g.
	$$
\end{thm}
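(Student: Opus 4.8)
The plan is to establish differentiability at $t=0$ of $p(t) = P_\nu((1-t)g + tf)$ by showing that $p$ is convex (so one-sided derivatives exist everywhere) and then that the left and right derivatives at $0$ coincide, both equalling $\int(f-g)\,d\mu_g$, where $\mu_g$ is the unique equilibrium measure from Theorem~\ref{thm:equillibrium} associated to $g$. Convexity is the easy input: for each fixed $\ww$ the map $t\mapsto \frac{1}{n}\log Z_n((1-t)g+tf,\ww)$ is convex in $t$ (it is a normalized log-sum-exp of the affine-in-$t$ family $S_n((1-t)g+tf)(\ww,\jj) = (1-t)S_ng(\ww,\jj) + tS_nf(\ww,\jj)$), hence $p(t)$, being a limit of such functions (averaged against $\nu$ via \eqref{prop:calcpres}), is convex. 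Therefore $p'_+(0)$ and $p'_-(0)$ exist, and it remains only to pin down their common value.

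For the derivative identity I would use $\mu_g$ as a test measure to produce a lower bound for $p(t)$. By the variational principle of Ledrappier--Walters (Theorem~\ref{thm:LW}) applied to the potential $(1-t)g+tf$, we have $p(t) \ge h^\xi_{\mu_g} + \int((1-t)g+tf)\,d\mu_g = \big(h^\xi_{\mu_g}+\int g\,d\mu_g\big) + t\int(f-g)\,d\mu_g = P_\nu(g) + t\int(f-g)\,d\mu_g = p(0) + t\int(f-g)\,d\mu_g$, the last equality by the "In particular" clause of Theorem~\ref{thm:equillibrium}. This linear lower bound is tangent to $p$ at $t=0$, which, combined with convexity, forces $p'_-(0) \le \int(f-g)\,d\mu_g \le p'_+(0)$. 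To close the gap I would derive the matching bound from the other side: for $t>0$ small, estimate $Z_n((1-t)g+tf,\ww)$ from above by $Z_n(g,\ww)$ times $\sup_{\jj} e^{t\,S_n(f-g)(\ww,\jj)}$ over the cylinders $[\ii]$, reweight so that the relevant supremum is controlled by $\mu_g$-mass using the comparison \eqref{eq:comparemu} of Theorem~\ref{thm:equillibrium}, and take $\frac1n\log$; this should yield $p(t) \le p(0) + t\int(f-g)\,d\mu_g + o(t)$ as $t\downarrow 0$, giving $p'_+(0) \le \int(f-g)\,d\mu_g$. A symmetric argument for $t<0$ (using $\sup e^{t S_n(f-g)} \le e^{|t|\cdot\text{(integral + error)}}$ with the inequality reversed) gives $p'_-(0) \ge \int(f-g)\,d\mu_g$, and the two inequalities together with convexity force $p'_-(0) = p'_+(0) = \int(f-g)\,d\mu_g$.

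The main obstacle is the upper estimate $p(t) \le p(0) + t\int(f-g)\,d\mu_g + o(t)$: one must convert a pointwise-in-$\jj$ supremum of $e^{tS_n(f-g)}$ into something governed by the \emph{average} $\int(f-g)\,d\mu_g$, and a crude bound like $e^{t\,\|S_n(f-g)\|_\infty}$ only gives $O(|t|)$, not the correct slope. The remedy is to split the sum $Z_n((1-t)g+tf,\ww)$ according to the value of $\frac{1}{n}S_n(f-g)(\ww,\jj)$: by the Shannon--McMillan--Breiman-type statement \eqref{eq:condshannon} for $\mu_g^\xi$ together with Birkhoff's theorem applied to $f-g$ under $\mu_g$, the bulk of the $\mu_g$-mass sits on cylinders where $\frac1n S_n(f-g)(\ww,\jj)\approx \int(f-g)\,d\mu_g$ and where $\mu_g([\ww,\ii|_n])\approx V(g,\ww,\ii|_n)/W(g,\ww)\cdot\nu([\ww])$ by \eqref{eq:comparemu}; summing $V((1-t)g+tf,\ww,\ii|_n)\le e^{C}V(g,\ww,\ii|_n)e^{t S_n(f-g)}$ over this bulk and estimating the (exponentially small in $n$, hence negligible after $\frac1n\log$) complement controls $Z_n$ from above. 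Uniform continuity of $\ww\mapsto P(f',\ww)$ and the bounded-variation hypothesis (which gives the distortion bounds \eqref{eq:bd1}--\eqref{eq:compareZW}) ensure all error terms are uniform and pass to the limit cleanly; the quasi-Bernoulli property of $\nu$ is what makes $\mu_g$ ergodic in the first place so that Birkhoff applies.
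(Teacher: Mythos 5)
Your lower bound is clean: applying Theorem~\ref{thm:LW} with the equilibrium state $\mu_g$ as a test measure gives $p(t)\ge p(0)+t\int(f-g)\,d\mu_g$, so by convexity $p'_-(0)\le\int(f-g)\,d\mu_g\le p'_+(0)$, and only the matching upper bound is missing. This route genuinely differs from the paper's, which instead introduces $H(t)=\lim_n\frac1n\sum_{\ww}\nu([\ww])\log\sum_{\ii}\mu_f([\ww,\ii])^t\mu_g([\ww,\ii])^{1-t}/\nu([\ww])$, establishes sub-/super-multiplicativity of $\overline H_n,\underline H_n$ from the quasi-Bernoulli structure of $\mu_f,\mu_g$ (Theorem~\ref{thm:equillibrium}), and extracts the derivative from that Fekete-type sandwich without ever needing a tail estimate.

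Your upper bound, however, has a genuine gap. After splitting $Z_n((1-t)g+tf,\ww)$ into a bulk $B_n$ where $\frac1n S_n(f-g)$ is $\varepsilon$-close to $a_0:=\int(f-g)\,d\mu_g$ and a tail $T_n$, you assert that $T_n$ is ``exponentially small in $n$, hence negligible after $\frac1n\log$.'' Neither Birkhoff's theorem nor the SMB-type statement \eqref{eq:condshannon} supplies any rate: they only give $\mu_\ww^\xi(T_n)\to0$. The rate is the crux, because the tail contributes roughly $\mu_\ww^\xi(T_n)\cdot e^{tn\sup|f-g|}$, and generically $\sup|f-g|>a_0+\varepsilon$, so unless $\mu_\ww^\xi(T_n)$ decays at least like $e^{-tn(\sup|f-g|-a_0-\varepsilon)}$ the tail dominates the bulk as $n\to\infty$ for each fixed $t>0$ and the estimate degenerates to the useless $p'_+(0)\le\sup|f-g|$. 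What is actually required is a large-deviations bound $\mu_g\{\,|\frac1nS_n(f-g)-a_0|>\varepsilon\,\}\le e^{-n\delta(\varepsilon)}$ for the quasi-Bernoulli measure $\mu_g$ (true, but a theorem in its own right, not a corollary of Birkhoff or SMB), then transferred to $\mu_\ww^\xi$ for $\nu$-a.e.\ $\ww$ via Markov's inequality and Borel--Cantelli. With that input your calculation gives $p(t)\le p(0)+t(a_0+\varepsilon)$ for $0<t<t_0(\varepsilon)$, hence $p'_+(0)\le a_0$ after letting $\varepsilon\to0$, and the proof closes; without it, the step is unproved.
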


\begin{proof}
	It is clear by the bounded distortion \eqref{eq:bd1} that there exists a constant $C>0$ such that for every $t\in\R$ and every $(\ww,\ii)\in\Gamma_{\mathbf{A},*}$
	$$
	C^{-1}Y(tf+(1-t)g,\ww,\ii)\leq Y(f,\ww,\ii)^tY(g,\ww,\ii)^{1-t}\leq CY(tf+(1-t)g,\ww,\ii).
	$$
	
	Let $\mu_f$ and $\mu_g$ be the unique ergodic measures defined in Theorem~\ref{thm:equillibrium}. Then for every $t\in\R$ and every $(\ww,\ii)\in\Gamma_{\mathbf{A},*}$
	\[
	\begin{split}
C^{-2}\frac{Y(tf+(1-t)g,\ww,\ii)}{W(f,\ww)^tW(g,\ww)^{1-t}}\nu(\ww)&\leq\mu_f([\ww,\ii])^t\mu_g([\ww,\ii])^{1-t}\\
&\leq C^2\frac{Y(tf+(1-t)g,\ww,\ii)}{W(f,\ww)^tW(g,\ww)^{1-t}}\nu(\ww).
	\end{split}
	\]
	Hence,
	\begin{multline*}
	P_\nu((1-t)g+ft)\\
	=(1-t)P_\nu(g)+tP_\nu(f)+\lim_{n\to\infty}\frac{1}{n}\sum_{\ww\in\Omega_n}\nu([\ww])\log\sum_{\ii\in\Sigma_{\mathbf{A},n}}\frac{\mu_f([\ww,\ii])^t\mu_g([\ww,\ii])^{1-t}}{\nu([\ww])}.
	\end{multline*}
	Thus, it is enough to show that
	$$
	H(t)=\lim_{n\to\infty}\frac{1}{n}\sum_{\ww\in\Omega_n}\nu([\ww])\log\sum_{\ii\in\Sigma_{\mathbf{A},n}}\frac{\mu_f([\ww,\ii])^t\mu_g([\ww,\ii])^{1-t}}{\nu([\ww])}
	$$
	is differentiable.
	
	\medskip
	
	{\rm Claim:}
	There exists a constant $C>0$ such that the sequence
	$$
	\overline{H}_n(t)=\sum_{\ww\in\Omega_n}\nu([\ww])\log\sum_{\ii\in\Sigma_{\mathbf{A},n}}\frac{C\mu_f([\ww,\ii])^t\mu_g([\ww,\ii])^{1-t}}{\nu([\ww])}
	$$
	is submultiplicative $\overline{H}_{n+m}(t)\leq \overline{H}_n(t)+\overline{H}_m(t)$ and
	$$
	\underline{H}_n(t)=\sum_{\ww\in\Omega_n}\nu([\ww])\log\sum_{\ii\in\Sigma_{\mathbf{A},n}}\frac{C^{-1}\mu_f([\ww,\ii])^t\mu_g([\ww,\ii])^{1-t}}{\nu([\ww])}
	$$
	is supermultiplicative $\underline{H}_{n+m}(t)\geq \underline{H}_n(t)+\underline{H}_m(t)$.
	
	\medskip
	
	\begin{proof}[Proof of the Claim]
		By Theorem~\ref{thm:equillibrium} and the equations \eqref{eq:almostmulti}-\eqref{eq:compareZW}, we have that the measures $\mu_f$ and $\mu_g$ are quasi-Bernoulli, and hence, there exists a constant $C>0$ such that
		\begin{multline*}
		\sum_{\ii\jj\in\Sigma_{\mathbf{A},n+m}}\mu_f([\ww,\ii\jj])^t\mu_g([\ww,\ii\jj])^{1-t}\\
		\leq C\sum_{\ii\jj\in\Sigma_{\mathbf{A},n+m}}\mu_f([\ww|_n,\ii])^t\mu_f([\sigma^n\ww,\jj])^t\mu_g([\ww|_n,\ii])^t\mu_g([\sigma^n\ww,\jj])^t\\
		\leq C\sum_{\substack{\ii\in\Sigma_{\mathbf{A},n}\\\jj\in\Sigma_{\mathbf{A},m}}}\mu_f([\ww|_n,\ii])^t\mu_f([\sigma^n\ww,\jj])^t\mu_g([\ww|_n,\ii])^t\mu_g([\sigma^n\ww,\jj])^t.
		\end{multline*}
		On the other hand,
		\begin{multline*}
		\sum_{\ii\jj\in\Sigma_{\mathbf{A},n+m}}\mu_f([\ww,\ii\jj])^t\mu_g([\ww,\ii\jj])^{1-t}\\
		\geq C^{-1}\sum_{\ii\jj\in\Sigma_{\mathbf{A},n+m}}\mu_f([\ww|_n,\ii])^t\mu_f([\sigma^n\ww,\jj])^t\mu_g([\ww|_n,\ii])^t\mu_g([\sigma^n\ww,\jj])^t\\ C^{-1}C'\sum_{\substack{\ii\in\Sigma_{\mathbf{A},n-r}\\\jj\in\Sigma_{\mathbf{A},m-r}}}\mu_f([\ww|_{n-r},\ii])^t\mu_f([\sigma^{n+2r}\ww,\jj])^t\mu_g([\ww|_{n-r},\ii])^{1-t}\mu_g([\sigma^{n+2r}\ww,\jj])^{1-t}\\
		\geq C^{-1}C'K^{-2r}\sum_{\substack{\ii\in\Sigma_{\mathbf{A},n}\\\jj\in\Sigma_{\mathbf{A},m}}}\mu_f([\ww|_{n},\ii])^t\mu_f([\sigma^{n}\ww,\jj])^t\mu_g([\ww|_{n},\ii])^{1-t}\mu_g([\sigma^{n}\ww,\jj])^{1-t}.\\
		\end{multline*}
		\end{proof}
	
	Since $H(0)=0$ and $\overline{H}_n(t)$ is differentiable for every $n$, we get for every $n\geq 1$
	\[
	\begin{split}
	\limsup_{t\to0}\frac{H(t)}{t}&\leq\limsup_{t\to0}\frac{\overline{H}_n(t)}{nt}\\
	&=\left.\frac{1}{n}\sum_{\ww\in\Omega_n}\nu([\ww])\dfrac{\sum_{\ii\in\Sigma_{\mathbf{A},n}}\frac{\mu_f([\ww,\ii])^t\mu_g([\ww,\ii])^{1-t}(\log\mu_f([\ww,\ii])-\log\mu_g([\ww,\ii]))}{\nu([\ww])}}{\sum_{\ii\in\mathcal{A}^n}\frac{\mu_f([\ww,\ii])^t\mu_g([\ww,\ii])^{1-t}}{\nu([\ww])}}\right|_{t=0}\\
	&=\frac{1}{n}\sum_{\ww\in\Omega_n}\nu([\ww])\sum_{\ii\in\Sigma_{\mathbf{A},n}}\frac{\mu_g([\ww,\ii])(\log\mu_f([\ww,\ii])-\log\mu_g([\ww,\ii]))}{\nu([\ww])}\\
	&=\frac{1}{n}\sum_{\substack{\ww\in\Omega_n\\ \ii\in\Sigma_{\mathbf{A},n}}}\mu_g([\ww,\ii])(\log\mu_f([\ww,\ii])-\log\mu_g([\ww,\ii]))\\
	&\leq\frac{C}{n}+\frac{1}{n}\sum_{\substack{\ww\in\Omega_n\\ \ii\in\Sigma_{\mathbf{A},n}}}\mu_g([\ww,\ii])(\log\frac{Y(f,\ww,\ii)\nu([\ww])}{W(f,\ww)}-\log\mu_g([\ww,\ii]))\\
	&\to \int fd\mu_g-h_\nu-P_\nu(f)+h_{\mu_g}\text{ as }n\to\infty,
	\end{split}
	\]
	where we applied again Theorem~\ref{thm:equillibrium}. The other inequality,
	$$
	\liminf_{t\to0}\frac{H(t)}{t}\geq \int fd\mu_g-h_\nu-P_\nu(f)+h_{\mu_g}\text{ as }n\to\infty
	$$
	is similar. Hence,
	$$
	p'(0)=-P_\nu(g)+P_\nu(f)+\int fd\mu_g-h_\nu-P_\nu(f)+h_{\mu_g}=\int fd\mu_g-\int gd\mu_g.
	$$
\end{proof}
	
	\subsection{Weighted Birkhoff average}
For $\alpha,\p\in\R^d$, let us consider the potential $f_{\underline{p}}\colon\Gamma_{\mathbf{A}}\mapsto\R$ defined as
$$
f_{\p}:=\langle \p,f-\alpha\rangle.
$$

First, we show the upper bound in Theorem~\ref{thm:typmain}.

\begin{lem}\label{lem:ub}
	For every $\ww\in\Omega$ and $\alpha\in\R^d$
	$$
		h_{\rm top}(E_\ww(\alpha))\leq\inf_{\p\in\R^d}P(f_{\p},\ww).
	$$
\end{lem}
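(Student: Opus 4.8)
The plan is to establish, for each fixed $\p\in\R^d$, the one-parameter estimate $h_{\rm top}(E_\ww(\alpha))\leq P(f_\p,\ww)$, and then take the infimum over $\p$. If $E_\ww(\alpha)=\emptyset$ the inequality is trivial (with the convention $h_{\rm top}(\emptyset)=-\infty$), so assume it is nonempty and fix $\p$ and $\varepsilon>0$. Write $P:=P(f_\p,\ww)=\limsup_{n\to\infty}\frac1n\log Z_n(f_\p,\ww)$; since $f$ is continuous on the compact set $\Gamma_A$ it is bounded, so $f_\p$ is bounded and $P$ is finite. By the definition of $\limsup$ there is an $n_0$ such that $Z_n(f_\p,\ww)\leq e^{(P+\varepsilon)n}$ for all $n\geq n_0$.

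Next I would exploit the defining property of $E_\ww(\alpha)$. For $\ii\in E_\ww(\alpha)$ we have $\frac1n S_nf_\p(\ww,\ii)=\bigl\langle\p,\frac1n S_nf(\ww,\ii)-\alpha\bigr\rangle\to 0$, so in particular $S_nf_\p(\ww,\ii)>-\varepsilon n$ for all $n$ large enough depending on $\ii$. Hence $E_\ww(\alpha)=\bigcup_{N\geq 1}E_N$, where $E_N:=\{\ii\in E_\ww(\alpha):S_nf_\p(\ww,\ii)>-\varepsilon n\text{ for all }n\geq N\}$.

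Now fix $N$ and take any $n\geq\max(N,n_0)$. The level-$n$ cylinders meeting $E_N$ form a cover of $E_N$, and each such cylinder $[\jj]$, $\jj\in\Sigma_{A,n}$, contains some $\ii\in E_N$, which forces $\sup_{\mathbf{k}\in[\jj]}e^{S_nf_\p(\ww,\mathbf{k})}\geq e^{S_nf_\p(\ww,\ii)}>e^{-\varepsilon n}$. Summing these lower bounds and comparing with $Z_n(f_\p,\ww)$ gives $\#\{\jj\in\Sigma_{A,n}:[\jj]\cap E_N\neq\emptyset\}\leq e^{\varepsilon n}Z_n(f_\p,\ww)\leq e^{(P+2\varepsilon)n}$, and therefore, covering $E_N$ by these cylinders of level $n$,
$$
\mathcal{H}^{P+3\varepsilon}_n(E_N)\leq e^{(P+2\varepsilon)n}e^{-(P+3\varepsilon)n}=e^{-\varepsilon n}\xrightarrow[n\to\infty]{}0.
$$
Thus $\mathcal{H}^{P+3\varepsilon}(E_N)=0$, i.e. $h_{\rm top}(E_N)\leq P+3\varepsilon$.

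Finally I would invoke the countable stability of the entropy set-function: for each $r$ one can pick covers of the $E_N$ by cylinders of level $>r$ with total weight $\sum_{C}e^{-(P+3\varepsilon)l(C)}<\delta 2^{-N}$, and their union is such a cover of $E_\ww(\alpha)$ of total weight $<\delta$, so $\mathcal{H}^{P+3\varepsilon}(E_\ww(\alpha))=0$ and hence $h_{\rm top}(E_\ww(\alpha))\leq\sup_N h_{\rm top}(E_N)\leq P+3\varepsilon$. Letting $\varepsilon\downarrow 0$ yields $h_{\rm top}(E_\ww(\alpha))\leq P(f_\p,\ww)$, and taking the infimum over $\p\in\R^d$ gives the claim. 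This is essentially the textbook multifractal upper-bound argument (no bounded variation is needed, only continuity of $f$); the only points that require a moment's care are the passage from the $\limsup$ in the definition of $P(f_\p,\ww)$ to the uniform bound for $n\geq n_0$, and the countable subadditivity of $\mathcal{H}^s$, both of which are immediate from the definitions.
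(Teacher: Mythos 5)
Your argument is correct and follows essentially the same route as the paper: cover $E_\ww(\alpha)$ by cylinders, use the fact that for $\ii\in E_\ww(\alpha)$ the Birkhoff sums of $f_\p$ are eventually $\geq -\varepsilon n$ to lower-bound the cylinder sup, and compare against the partition function $Z_n(f_\p,\ww)$. The only structural difference is cosmetic: the paper covers $E_\ww(\alpha)$ directly with cylinders whose levels vary with the point and sums over all $n\geq N$ at once, whereas you first split $E_\ww(\alpha)=\bigcup_N E_N$, cover each $E_N$ at a single level, and invoke the countable stability of $\mathcal{H}^s$ — both are standard and equally valid.
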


\begin{proof}
	The proof is standard, but for completeness, we give it here.
	
	Let $s>s_0>\inf_{\p\in\R^d}P(f_{\p},\ww)$. Hence, there exists $\p\in\R^d$ such that $s_0>P(f_{\p},\ww)$. Thus there exists $N'\geq1$ such that for every $n\geq N'$
	$$
	\sum_{\ii\in\Sigma_{\mathbf{A},n}}e^{\langle\p,S_nf-n\alpha\rangle}<e^{s_0n}.
	$$
	By definition,
	\begin{equation}\label{eq:defE}
	E_\ww(\alpha)=\bigcap_{M=1}^\infty\bigcup_{N=1}^\infty\bigcap_{n\geq N}\left\{\ii\in X:\left|\frac{1}{n}S_nf(\ww,\ii)-\alpha\right|<\frac{1}{M}\right\}.
	\end{equation}
	Since $f(\ww,\cdot)\colon \Sigma_{\mathbf{A}}\mapsto\R^d$ is continuous over a compact set, we get that it is uniformly continuous. Thus, for every $M\geq1$ there exists $C>0$ such that for every $n\geq1$, $\ii\in\Sigma_{\mathbf{A},n}$ and every $\jj\in[\ii]$
	$$
	\left|S_nf(\ww,\jj)-\sup_{\jj\in[\ii]}S_nf(\ww,\jj)\right|\leq\frac{Cn}{M}.
	$$
	Choose $M\geq1$ such that $|\p|\frac{1+C}{M}<(s-s_0)/2$. By \eqref{eq:defE}, we get that for every $N$ sufficiently large
\[
\begin{split}
\mathcal{H}_N^s(E_\ww(\alpha))&\leq\sum_{n=N}^\infty\sum_{\substack{\ii\in\Sigma_{\mathbf{A},n} \\ \left|\sup_{\jj\in[\ii]}S_nf(\ww,\jj)-n\alpha\right|<(1+C)n/M}}e^{-n s}\\
&\leq\sum_{n=N}^\infty e^{-n(s-s_0)/2}\sum_{\substack{\ii\in\Sigma_{\mathbf{A},n} \\ \left|\sup_{\jj\in[\ii]}S_nf(\ww,\jj)-n\alpha\right|<(1+C)n/M}}e^{-n s_0+\langle\p,S_nf-n\alpha\rangle}\\
&\leq\sum_{n=N}^\infty e^{-n(s-s_0)/2}\to0\text{ as }N\to\infty.
\end{split}
\]
\end{proof}

Recall that \begin{equation}\label{def:pa}
\mathcal{P}_{\mathbf{A}}=\{\alpha\in\R^d:\text{ there exists }\mu\in\mathcal{M}_\nu(\Gamma_{\mathbf{A}})\text{ such that }\int fd\mu=\alpha\}.
\end{equation}
It is easy to see that $\mathcal{P}_{\mathbf{A}}$ is a closed and convex set. Moreover, without loss of generality, we may assume that $\mathcal{P}_{\mathbf{A}}$ has an interior point. Indeed, if $\mathcal{P}_{\mathbf{A}}$ does not contain interior point then there exists a $d'$-dimensional hyperplane $V$ such that $\mathcal{P}_{\mathbf{A}}\subset V$. By changing coordinates, we may assume that $f\colon\Gamma_{\mathbf{A}}\mapsto\R^{d'}$. Also, for $\nu$-almost every $\ww$,
$$
\mathcal{P}_{\mathbf{A}}=\{\alpha\in\R^d:\text{ there exists }\ii\in\Sigma_{\mathbf{A}}\text{ such that }\lim_{n\to\infty}\frac{1}{n}S_nf(\ww,\ii)=\alpha\}.
$$
Indeed, take the sequence $\mu_n=\frac{1}{n}\sum_{k=0}^n\delta_{\sigma^k\ww,\sigma^k\ii}$ and let $\mu$ be an accumulation point of the sequence $\mu_n$ in the weak*-topology, we get $\int fd\mu=\lim_{k\to\infty}\int fd\mu_{n_k}=\alpha$ and for every $g\in L^1(\Omega)$, $\int gd\Pi_*\mu=\lim_{k\to\infty}\int g\circ\Pi d\mu_{n_k}=\lim_{k\to\infty}\frac{1}{n_k} \sum_{\ell=0}^{n_k}g(\sigma^\ell\ww)=\int gd\nu$. Moreover, since $\sigma_*\mu_n=\mu_n-\frac{1}{n}\delta_{\ww,\ii}+\frac{1}{n}\delta_{\sigma^{n+1}\ww,\sigma^{n+1}\ii}$, we get that $\mu$ is $\sigma$-invariant.

Theorem~\ref{thm:equillibrium} implies that for every $\p\in\R^d$ there exists a $\sigma$-invariant ergodic measure $\mu_{\p}$ such that
\begin{equation}
P_\nu(f_{\p})=h_{\mu_{\p}}^\xi+\int f_{\p}d\mu_{\p}.
\end{equation}

\begin{lem}\label{lem:convex}
The conditional pressure $\p\mapsto P_\nu(f_{\p})$ is convex.
\end{lem}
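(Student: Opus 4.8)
The plan is to reduce the convexity of $\p\mapsto P_\nu(f_{\p})$ to the standard convexity of the conditional pressure functional $g\mapsto P_\nu(g)$ along line segments, and then to prove the latter by a H\"older inequality at the level of the partition functions $Z_n(g,\ww)$. The first observation is that $\p\mapsto f_{\p}$ is affine: for $\p_0,\p_1\in\R^d$ and $t\in(0,1)$, writing $\p_t=(1-t)\p_0+t\p_1$, one has $f_{\p_t}=(1-t)f_{\p_0}+tf_{\p_1}$ as functions on $\Gamma_A$, since $\langle\p_t,f-\alpha\rangle=(1-t)\langle\p_0,f-\alpha\rangle+t\langle\p_1,f-\alpha\rangle$. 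Hence it suffices to prove
\[
P_\nu\big((1-t)g_0+tg_1\big)\leq (1-t)P_\nu(g_0)+tP_\nu(g_1)
\]
for any two continuous potentials $g_0,g_1$ of bounded variation.

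To prove this inequality I would fix $\ww\in\Omega$ and $n\geq 1$, and use the cocycle identity $S_n\big((1-t)g_0+tg_1\big)=(1-t)S_ng_0+tS_ng_1$ together with the elementary bound $\sup_{\jj}(F^{1-t}G^t)\leq(\sup_{\jj}F)^{1-t}(\sup_{\jj}G)^t$ for nonnegative $F,G$, which gives for each $\ii\in\Sigma_{A,n}$
\[
\sup_{\jj\in[\ii]}e^{S_n((1-t)g_0+tg_1)(\ww,\jj)}\leq V(g_0,\ww,\ii)^{1-t}\,V(g_1,\ww,\ii)^{t}.
\]
Summing over $\ii\in\Sigma_{A,n}$ and applying H\"older's inequality with exponents $\tfrac1{1-t}$ and $\tfrac1t$ yields
\[
Z_n\big((1-t)g_0+tg_1,\ww\big)\leq\Big(\sum_{\ii\in\Sigma_{A,n}}V(g_0,\ww,\ii)\Big)^{1-t}\Big(\sum_{\ii\in\Sigma_{A,n}}V(g_1,\ww,\ii)\Big)^{t}=Z_n(g_0,\ww)^{1-t}\,Z_n(g_1,\ww)^{t}.
\]

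Taking logarithms, integrating against $\nu$, dividing by $n$ and letting $n\to\infty$ then finishes the argument: by \eqref{prop:calcpres} (equivalently, by \eqref{eq:submulti} and Kingman's subadditive ergodic theorem applied to each of the three potentials) all three normalized limits exist and equal the corresponding conditional pressures, so $P_\nu((1-t)g_0+tg_1)\leq(1-t)P_\nu(g_0)+tP_\nu(g_1)$. Specializing to $g_0=f_{\p_0}$, $g_1=f_{\p_1}$ and invoking the affineness noted above gives $P_\nu(f_{\p_t})\leq(1-t)P_\nu(f_{\p_0})+tP_\nu(f_{\p_1})$, which is the claimed convexity.

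There is essentially no deep obstacle here; this is the classical H\"older/convexity computation for topological pressure. The only points requiring mild care are that the supremum over a cylinder of a product is dominated by the product of the suprema (so the finite-$n$ inequality is clean) and that the subadditive limit defining $P_\nu$ may be applied simultaneously to $g_0$, $g_1$ and their convex combination --- both of which are already guaranteed by the bounded-variation estimates and \eqref{prop:calcpres} established above.
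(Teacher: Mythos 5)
Your proof is correct, but it takes a genuinely different route from the paper. The paper's argument is measure-theoretic: it invokes Theorem~\ref{thm:equillibrium} to produce a conditional equilibrium measure $\mu=\mu_{\beta_1\p_1+\beta_2\p_2}$ satisfying $P_\nu(f_{\beta_1\p_1+\beta_2\p_2})=h_\mu^\xi+\int f_{\beta_1\p_1+\beta_2\p_2}\,d\mu$, splits $f_{\beta_1\p_1+\beta_2\p_2}=\beta_1 f_{\p_1}+\beta_2 f_{\p_2}$ and $h_\mu^\xi=\beta_1 h_\mu^\xi+\beta_2 h_\mu^\xi$, and then bounds each piece by $P_\nu(f_{\p_i})$ via the Ledrappier--Walters variational inequality (Theorem~\ref{thm:LW}). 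Your argument instead works directly at the level of the partition functions $Z_n(\cdot,\ww)$, using that $\sup$ of a product is at most the product of the $\sup$'s together with H\"older's inequality to get $Z_n((1-t)g_0+tg_1,\ww)\leq Z_n(g_0,\ww)^{1-t}Z_n(g_1,\ww)^t$, then taking logarithms, integrating and passing to the limit. Your approach is more elementary and self-contained: it uses only the definition \eqref{eq:condpresdefdef} and the existence of the limit in \eqref{prop:calcpres}, and in particular does not rely on quasi-Bernoullicity of $\nu$ or on the bounded-variation hypothesis beyond what is already needed for \eqref{prop:calcpres}, whereas the paper's proof needs the full strength of Theorem~\ref{thm:equillibrium} (existence and properties of the equilibrium state). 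On the other hand, the paper's proof is a two-line consequence of machinery that is developed anyway for Theorem~\ref{thm:typmain}, and it makes transparent where the convexity ``comes from'' in variational terms. Both are standard ways to see convexity of pressure; your proof would work in slightly greater generality.
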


\begin{proof}
Let $\beta_1,\beta_2>0$ be with $\beta_1+\beta_2=1$ and $\p_1,\p_2\in\R^d$. Then there exist a measure $\mu=\mu_{\beta_1\p_1+\beta_2\p_2}\in\mathcal{E}_\nu(\Gamma_{\mathbf{A}})$ such that
\[
\begin{split}
P_\nu(f_{\beta_1\p_1+\beta_2\p_2})&= h_\mu^\xi+\int f_{\beta_1\p_1+\beta_2\p_2}d\mu\\
&=\beta_1h_\mu^\xi+\beta_2h_\mu^\xi+\beta_1\int f_{\p_1}d\mu+\beta_2\int f_{\p_2}d\mu\\
&\leq \beta_1P_\nu(f_{\p_1})+\beta_2P_\nu(f_{\p_2}).
\end{split}
\]
\end{proof}

\begin{lem}
	For every $\alpha\in\mathcal{P}^o_{\mathbf{A}}$, there there exists $\p^*\in\R^d$ such that $\inf_{\p}P_\nu(f_{\p})=P_\nu(f_{\p^*})$, where $\mathcal{P}^o_{\mathbf{A}}$ denotes the interior of $\mathcal{P}_{\mathbf{A}}$.
\end{lem}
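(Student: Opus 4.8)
The plan is to realize the infimum as a genuine minimum by showing that $\p\mapsto P_\nu(f_\p)$ is a continuous, \emph{coercive} function on $\R^d$. Continuity comes for free: by Lemma~\ref{lem:convex} this map is convex on all of $\R^d$, and a finite real-valued convex function on $\R^d$ is automatically continuous. So the only real content is the coercivity claim $\lim_{|\p|\to\infty}P_\nu(f_\p)=+\infty$, and the hypothesis $\alpha\in\mathcal{P}_A^o$ is precisely what will make this work.

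For the coercivity I would use the lower bound coming from the Ledrappier--Walters variational principle. For any fixed $v\in\R^d$ the potential $\langle v,f-\alpha\rangle$ has bounded variation (since $f$ does and $v,\alpha$ are constants), so by the identification of $P_\nu$ with $\int P(\cdot,\ww)\,d\nu(\ww)$ established around \eqref{eq:condpres2}--\eqref{prop:calcpres}, together with Theorem~\ref{thm:LW}, one has for every $\mu\in\mathcal{M}_\nu(\Gamma_A)$
\[
P_\nu(f_\p)\ \geq\ h_\mu^\xi+\int f_\p\,d\mu\ =\ h_\mu^\xi+\big\langle\p,\ \textstyle\int f\,d\mu-\alpha\big\rangle\ \geq\ \big\langle\p,\ \textstyle\int f\,d\mu-\alpha\big\rangle ,
\]
where we used $h_\mu^\xi\geq0$. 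Now suppose coercivity fails, i.e.\ there exist $R$ and a sequence $\p_n$ with $|\p_n|\to\infty$ and $P_\nu(f_{\p_n})\leq R$; after passing to a subsequence, $v_n:=\p_n/|\p_n|$ converges to a unit vector $v$. Since $\alpha$ is an interior point of $\mathcal{P}_A$, the point $\alpha+\delta v$ lies in $\mathcal{P}_A$ for some $\delta>0$, so there is $\mu\in\mathcal{M}_\nu(\Gamma_A)$ with $\int f\,d\mu=\alpha+\delta v$. Then $\langle v_n,\int f\,d\mu-\alpha\rangle=\delta\langle v_n,v\rangle\to\delta$, so for all large $n$ the displayed bound gives $R\geq P_\nu(f_{\p_n})\geq|\p_n|\,\langle v_n,\int f\,d\mu-\alpha\rangle\geq|\p_n|\cdot\tfrac{\delta}{2}\to\infty$, a contradiction. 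Hence $P_\nu(f_\p)\to+\infty$ as $|\p|\to\infty$.

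With coercivity and continuity in hand, $\inf_{\p\in\R^d}P_\nu(f_\p)$ is attained on a sufficiently large closed ball, which produces the desired $\p^*$. I would also record, for the later parts of Theorem~\ref{thm:typmain}, that Theorem~\ref{thm:differentiable} makes $\p\mapsto P_\nu(f_\p)$ differentiable with $\nabla P_\nu(f_\p)=\int(f-\alpha)\,d\mu_\p$, so the vanishing of this gradient at the interior minimizer $\p^*$ yields $\int f\,d\mu_{\p^*}=\alpha$. The one genuinely delicate point is the coercivity step, and it is delicate for a reason: if $\alpha$ lay on $\partial\mathcal{P}_A$ there would be a direction $v$ with $\langle v,\int f\,d\mu-\alpha\rangle\leq0$ for every $\mu\in\mathcal{M}_\nu(\Gamma_A)$, the lower bound above would carry no information along that ray, and the infimum can genuinely fail to be attained — compare the case $\alpha\notin I$ in Theorem~\ref{thm: n=1}.
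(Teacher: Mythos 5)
Your argument is correct and follows essentially the same route as the paper: coercivity of $\p\mapsto P_\nu(f_\p)$ is obtained from the variational lower bound $P_\nu(f_\p)\geq h_\mu^\xi+\langle\p,\int f\,d\mu-\alpha\rangle$ together with the hypothesis $\alpha\in\mathcal{P}_A^o$, and then the minimizer exists by continuity (from convexity). The only difference is cosmetic — the paper gets coercivity directly by using the interior-point condition to produce a uniform $\eta>0$ with $\int f\,d\mu-\alpha=\eta\p$ for every unit vector $\p$, giving $P_\nu(f_{c\p})\geq c\eta$, whereas your compactness-and-contradiction argument with a convergent subsequence of directions is slightly more roundabout but equally valid.
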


\begin{proof}
	Suppose that $\alpha\in\mathcal{P}^o_{\mathbf{A}}$. Then there exists an $\eta>0$ such that for every $\underline{p}\in\R^d$ with $|\underline{p}|=1$ there exists $\mu\in\mathcal{M}_\nu(\Gamma_{\mathbf{A}})$ such that $\int fd\mu-\alpha=\eta\underline{p}$. Thus, for every $c>0$
	$$
	P_\nu(f_{c\p})\geq h_\mu^\xi+\int\langle c\underline{p},f-\alpha\rangle d\mu\geq c\eta|\underline{p}|^2=c\eta.
	$$
	Thus, $\lim_{|\p|\to\infty}P_\nu(f_{\p})=\infty$ and by the convexity of the conditional pressure Lemma~\ref{lem:convex}, we get the statement.
\end{proof}

\begin{lem}\label{lem:int}
	Let $\p^*\in\R^d$ be such that $\inf_{\p}P_\nu(f_{\p})=P_\nu(f_{\p^*})$ and let $\mu_{\p^*}$ be the conditional equilibrium defined in Theorem~\ref{thm:equillibrium}. Then
	$$
	\int \phi d\mu_{\p^*}=\alpha.
	$$
\end{lem}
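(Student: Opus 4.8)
The plan is to use that $\p^*$ is a global (hence interior) minimizer of the convex function $\p\mapsto P_\nu(f_\p)$ and to extract the vanishing of its derivative from the differentiability formula in Theorem~\ref{thm:differentiable}, which identifies that derivative with an integral against the conditional equilibrium state $\mu_{\p^*}$ of Theorem~\ref{thm:equillibrium}.

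First I would fix an arbitrary direction $\underline{q}\in\R^d$ and look at the affine segment $t\mapsto f_{\p^*+t\underline{q}}$. The key observation is the identity
\[
f_{\p^*+t\underline{q}}=\langle\p^*+t\underline{q},\ f-\alpha\rangle=(1-t)\,f_{\p^*}+t\,f_{\p^*+\underline{q}},
\]
so this family is precisely of the interpolating form treated in Theorem~\ref{thm:differentiable}, with $g=f_{\p^*}$ and the role of the potential "$f$" in that theorem played by $f_{\p^*+\underline{q}}$. Both potentials have bounded variation, since $f$ does and $\alpha$ is a constant vector, and the equilibrium state assigned to $g=f_{\p^*}$ there is exactly the measure $\mu_{\p^*}$ named in this lemma. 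Hence $t\mapsto P_\nu(f_{\p^*+t\underline{q}})$ is differentiable at $t=0$, with
\[
\left.\frac{d}{dt}\right|_{t=0}P_\nu(f_{\p^*+t\underline{q}})=\int\bigl(f_{\p^*+\underline{q}}-f_{\p^*}\bigr)\,d\mu_{\p^*}=\int\langle\underline{q},\ f-\alpha\rangle\,d\mu_{\p^*}=\Bigl\langle\underline{q},\ \int f\,d\mu_{\p^*}-\alpha\Bigr\rangle,
\]
where the last equality uses that $\mu_{\p^*}$ is a probability measure, so $\int\alpha\,d\mu_{\p^*}=\alpha$.

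Next I would invoke the hypothesis: $P_\nu(f_{\p^*+t\underline{q}})\geq P_\nu(f_{\p^*})$ for every $t\in\R$, so the differentiable function $t\mapsto P_\nu(f_{\p^*+t\underline{q}})$ attains a global minimum at the interior point $t=0$, and therefore its derivative there vanishes. Combined with the previous display this gives $\langle\underline{q},\ \int f\,d\mu_{\p^*}-\alpha\rangle=0$ for every $\underline{q}\in\R^d$; choosing $\underline{q}=\int f\,d\mu_{\p^*}-\alpha$ forces $\int f\,d\mu_{\p^*}=\alpha$, which is the assertion.

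I do not anticipate a genuine obstacle here: once Theorem~\ref{thm:differentiable} is available, the whole proof is a first-order optimality condition. The only points that need a moment's care are the bookkeeping that $f_{\p^*}$ and $f_{\p^*+\underline{q}}$ meet the hypotheses of Theorem~\ref{thm:differentiable} (immediate, as $f$ has bounded variation and $\alpha$ is constant) and the identification of the equilibrium measure produced there for $g=f_{\p^*}$ with the measure $\mu_{\p^*}$ of the present statement, which holds by definition.
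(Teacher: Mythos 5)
Your proof is correct and follows essentially the same route as the paper: both exploit the affine identity $f_{\p^*+t\underline{q}}=(1-t)f_{\p^*}+tf_{\p^*+\underline{q}}$ to invoke Theorem~\ref{thm:differentiable} and then apply the first-order optimality condition at the interior minimizer $\p^*$. The only cosmetic difference is that the paper phrases this as a proof by contradiction with a single normalized direction $\underline{q}=(\int f\,d\mu_{\p^*}-\alpha)/|\int f\,d\mu_{\p^*}-\alpha|$, whereas you argue directly by letting $\underline{q}$ range over all of $\R^d$ and then specializing.
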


\begin{proof}
	Let us argue by contradiction. Suppose that $\int \phi d\mu_{\p^*}\neq\alpha$. Let $\underline{q}=\frac{\int\phi d\mu_{\p^*}-\alpha}{|\int\phi d\mu_{\p^*}-\alpha|}$.
	
	Observe that for any $\p_1,\p_2\in\R^d$ and $t\in\R$, $t f_{\p_1}+(1-t) f_{\p_2}=f_{t\p_1+(1-t)\p_2}$. Hence, by Theorem~\ref{thm:differentiable}, the function $p\colon t\mapsto P_\nu(f_{(1-t)\p^*+(\p^*+\underline{q})t})$ is differentiable at $t=0$, moreover,
	$$
	p'(0)=\int f_{\p^*+\underline{q}}-f_{\p^*}d\mu_{\p^*}.
	$$
	But $p$ has a minimum at $t=0$ so
	$$
	0=	p'(0)=\int f_{\p^*+\underline{q}}-f_{\p^*}d\mu_{\p^*}=\langle\underline{q},\int\phi d\mu_{\p^*}-\alpha\rangle=\left|\int\phi d\mu_{\p^*}-\alpha\right|,
	$$
	which is a contradiction.
\end{proof}



\begin{proof}[Proof of Theorem~\ref{thm:typmain}]
	It is enough to show that for every $\alpha\in\mathcal{P}^o_{\mathbf{A}}$ and $\nu$-almost every $\ww$
	$$
	h_{\rm top}(E_\ww(\alpha))\geq h_{\mu_{\p^*}}-h_\nu,
	$$
	where $\mu_{\p^*}$ is the conditional equilibrium of $P_\nu(f_{\p^*})=\inf_{\p\in\R^d}P_\nu(f_\p)$ defined in Theorem~\ref{thm:equillibrium}. Indeed, Theorem~\ref{thm:entconv}, Lemma~\ref{lem:int} and Theorem~\ref{thm:equillibrium} imply that
	$$
	h_{\mu_{\p^*}}-h_\nu=h_{\mu_{\p^*}}^\xi=h_{\mu_{\p^*}}^\xi+\int f_{\p^*}d\mu_{\p^*}=P_\nu(f_{\p^*})=\inf_{\p\in\R^d}P_\nu(f_\p).
	$$
	The upper bound follows by equation \eqref{eq:condpres2} and Lemma~\ref{lem:ub}.
	
	Let $\mu_\ww^\xi$ be the family of conditional measures with respect to the partition $\xi$ and $\mu_{\p^*}$ defined by Rohlin's Disintegration Theorem. By Theorem~\ref{thm:entconv},
	$$
	\lim_{n\to\infty}\frac{-1}{n}\log\mu_{\ww}^\xi([\ii|_n])=h_{\mu_{\p^*}}-h_\nu\text{ for $\mu_{\p^*}$-almost every $(\ww,\ii)\in\Gamma_{\mathbf{A}}$.}
	$$
	By Egorov's theorem, for every $\varepsilon>0$ there exists a set $J_1\subset\Gamma_{\mathbf{A}}$ and a constant $C>0$ such that $\mu_{\p^*}(J_1)>1-\varepsilon$ and for every $(\ww,\ii)\in J_1$ and $n\geq1$
	$$
	\mu_{\ww}^\xi([\ii|_n])\leq Ce^{-n(h_{\mu_{\p^*}}-h_\nu-\varepsilon)}.
	$$
	Since $1-\varepsilon<\mu_{\p^*}(J_1)=\int\mu_\ww^\xi(J_1)d\nu(\ww)$, by Markov's inequality, we get that
	$$
	\nu(\{\ww\in\Omega: \mu_\ww^\xi(J_1\cap\xi(\ww))>1-\sqrt{\varepsilon})>1-\sqrt{\varepsilon}.
	$$
	By Birkhoff's Ergodic Theorem and Lemma~\ref{lem:int},
	$$
	\lim_{n\to\infty}\frac{1}{n}\sum_{k=0}^{n-1}f(\sigma^k\ww,\sigma^k\ii)=\alpha.
	$$
	Hence, there exists $J\subset J_1$ such that $\nu(J_1\setminus J)=0$ and for every $\ww\in J$, $\mu_\ww^\xi(E_\ww(\alpha)\cap J_1)>1-\sqrt{\varepsilon}$. Thus, by Lemma~\ref{lem:Forstman} for every $\ww\in J$
	$$
	h_{\rm top}(E_\ww(\alpha))\geq h_{\rm top}(E_\ww(\alpha)\cap J_1)\geq h_{\mu_{\p^*}}-h_\nu-\varepsilon.
	$$
	Since $\varepsilon>0$ was arbitrary, the statement follows.

Finally, let $\widetilde{\mu}$ be the ergodic $\sigma$-invariant measure on $\Sigma_{\mathbf{A}}$ such that $h_{\widetilde{\mu}}=h_{\rm top}(\Sigma_{\mathbf{A}})$. Then for $\alpha_0=\iint f(\ww,\ii)d\widetilde{\mu}(\ii)d\nu(\ww)$ we get $h_{\rm top}(E_\ww(\alpha_0))\geq h_{\rm top}(\Sigma_{\mathbf{A}})$ for $\nu$-almost every $\ww$.
\end{proof}

\begin{proof}[Proof of Theorem~\ref{cor:main}]
	Let $I$ be the domain of the map
	$$
	p\colon\alpha\mapsto\inf_{p\in\R}P_\nu(p\cdot (f-\alpha))=\inf_{p\in\R}\left(P_\nu(p\cdot f)-p\alpha\right).
	$$
	If $I$ is empty or a single point then there is nothing to prove, so we might assume that $I$ has non-empty interior. By Theorem~\ref{thm:differentiable}, the map $p\mapsto P_\nu(p\cdot f)$ is differentiable and by Lemma~\ref{lem:convex}, the derivative $p\mapsto P_\nu'(p\cdot f)=\int fd\mu_p$ is increasing. Hence, $I=[\lim_{p\to-\infty}P_{\nu}'(pf),\lim_{p\to\infty}P_{\nu}'(pf)]$. Moreover, the map $\alpha\mapsto p(\alpha)$ is concave and continuous over $I$.
	
	By Theorem~\ref{thm:typmain}, for every $\alpha\in I^o$ and $\nu$-almost every $\ww$, $h_{\rm top}(E_\ww(\alpha))=p(\alpha)$. Then by Fubini's Theorem, for $\nu$-almost every $\ww$ and Lebesgue almost every $\alpha\in I^o$, $h_{\rm top}(E_\ww(\alpha))=p(\alpha)$.
	
	Using Theorem~\ref{thm:cont} with the choice $\phi_i(\ii):= f(\sigma^i\ww,\ii)$, the map $\alpha\mapsto h_{\rm top}(E_\ww(\alpha))$ is continuous for every $\ww\in\Omega$. This together with the continuity of the map $\alpha\mapsto p(\alpha)$ implies that $p(\alpha)\equiv h_{\rm top}(E_\ww(\alpha))$ over $I$ for $\nu$-almost every $\ww$.
\end{proof}

\section{Frequency regular sequences}\label{sec:Frequency regular sequences}

In the rest of the paper, we assume that $\Sigma_{\mathbf{A}}=\Sigma$, that is, we need to work on the full shift.
In this section, we establish the connection between $\nu$-typical and frequency regular sequences and prove
Theorem~\ref{thm:goal}.
The proof of our main theorem relies on the following construction, which first appeared in Rams \cite{R2}.

Let $\ww,\ww'\in\Omega$ be two $\underline{q}$-frequency regular sequences with the same frequency.
We define a permutation $\gamma$ on $\N$ such that

\begin{equation}\label{def:gamma}
	\begin{split}
	\gamma(k)&=\ell\text{ if $\omega_k$ is the $n$th appearance of the symbol of $\omega_k$ in $\ww$ }\\
	&\qquad\text{then $\ell$ is the position of the $n$th appearance of $\omega_k$ in $\ww'$.}
	\end{split}
\end{equation}

More precisely, let
$$  M_{n,\lambda_i}(\ww)=\min\{k\geq1: \#\{1\leq j\leq k:w_j=\lambda_i\}=n\}  $$
and
$$  P_{k}(\ww)=\#\{1\leq i\leq k:w_i=w_k\}.  $$
Then
$$  \gamma(k)=M_{P_k(\ww),w_k}(\ww').  $$

By the definition of $\gamma(k)$, we have $w_k=w_{\gamma(k)}'$. Finally, we set the map
\begin{equation}\label{def:almlip}
	G_{\ww,\ww'}(\ii):=(i_{\gamma(1)},i_{\gamma(2)},\ldots).
\end{equation}

\begin{lem}\label{lem:hold}
	For $\ww,\ww'\in\Omega$ as above, for every $\alpha<1$ there
	exists $C>0$
	such that for every $\ii,\jj\in\Sigma$
	\begin{equation}
		\label{eq:hold} d(G_{\ww,\ww'}(\ii),G_{\ww,\ww'}(\jj))\leq Cd(\ii,\jj)^{\alpha}.
	\end{equation}
	Moreover, $G_{\ww,\ww'}\circ G_{\ww',\ww}(\ii)=\ii$.
\end{lem}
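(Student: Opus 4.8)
The plan is to realize $G_{\ww,\ww'}$ as a rearrangement of coordinates, $(G_{\ww,\ww'}\ii)_k=i_{\pi(k)}$, for a suitable bijection $\pi$ of $\N$ built from $\ww$ and $\ww'$. The hypothesis $\Sigma_A=\Sigma$ is exactly what makes this legitimate: an arbitrary permutation of coordinates sends the full shift to itself, so no admissibility problem arises. To construct $\pi$ I would first reduce to the case where every frequency $q_j$ is strictly positive: recode each letter $j$ with $q_j=0$ as a fixed letter $j^*$ with $q_{j^*}>0$, obtaining sequences $\tilde\ww,\tilde\ww'$ that differ from $\ww,\ww'$ only on sets of density $0$, are still $\underline q$-frequency regular, and use only positive-frequency letters, and then set $G_{\ww,\ww'}:=G_{\tilde\ww,\tilde\ww'}$. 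The conclusion of the lemma concerns only the metric behaviour of $G$, so this substitution is harmless; for the later application to $E_\ww(\alpha)$ one simply records separately that a density-$0$ change of weights does not affect the weighted Birkhoff averages.

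Assuming now all $q_j>0$, for each letter $j$ enumerate its occurrences in $\ww$ as $a_j(1)<a_j(2)<\cdots$ and in $\ww'$ as $a'_j(1)<a'_j(2)<\cdots$; both lists are infinite. Frequency regularity gives $m/a_j(m)\to q_j$ and $m/a'_j(m)\to q_j$, hence $a_j(m)/a'_j(m)\to1$, uniformly over the finitely many letters. Define $\pi$ by $\pi(a'_j(m))=a_j(m)$: since $\N$ is partitioned by the occurrence sets in $\ww'$ and also by those in $\ww$, $\pi$ is a genuine bijection of $\N$, and the same construction with the roles of $\ww,\ww'$ exchanged produces $\pi^{-1}$. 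Taking $G_{\ww,\ww'}$ and $G_{\ww',\ww}$ to be the corresponding coordinate rearrangements yields $G_{\ww,\ww'}\circ G_{\ww',\ww}(\ii)=\ii$ immediately.

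It remains to prove the Hölder estimate, and the crucial point is that the displacement is sublinear, $\pi(k)/k\to1$. Writing $q_{\min}=\min_j q_j>0$, for $k$ large the letter $w'_k$ has occurred at least $\frac{1}{2}q_{\min}k$ times up to position $k$ (uniformly in the letter), so the index $m=m(k)$ with $k=a'_{w'_k}(m)$ tends to $\infty$; then $\pi(k)/k=a_{w'_k}(m)/a'_{w'_k}(m)\to1$ by the uniform convergence above. Now fix $\ii,\jj$ and set $n=\ii\wedge\jj$. Every coordinate $k$ with $\pi(k)<n$ satisfies $(G_{\ww,\ww'}\ii)_k=(G_{\ww,\ww'}\jj)_k$, so $d(G_{\ww,\ww'}\ii,G_{\ww,\ww'}\jj)\le e^{-g(n)}$ with $g(n)=\min\{k:\pi(k)\ge n\}$, and sublinearity of the displacement forces $g(n)/n\to1$. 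Hence, for a fixed $\alpha<1$ there is $n_0$ with $g(n)\ge\alpha n$ for all $n\ge n_0$, giving $d(G_{\ww,\ww'}\ii,G_{\ww,\ww'}\jj)\le e^{-\alpha n}=d(\ii,\jj)^{\alpha}$ whenever $\ii\wedge\jj\ge n_0$; the finitely many smaller values of $\ii\wedge\jj$ are absorbed into $C=e^{\alpha n_0}$.

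The only genuinely delicate ingredients are the reduction dealing with zero-frequency letters (without it both the bijectivity of $\pi$ and the sublinearity of its displacement can fail) and the uniformity in $j$ needed to pass from $a_j(m)/a'_j(m)\to1$ to $\pi(k)/k\to1$; both become routine once $q_{\min}>0$. I would emphasise that $\pi$ is in general far from bi-Lipschitz — the displacement $|\pi(k)-k|$ can be of order $\sqrt k$ — which is precisely why one only gets Hölder continuity with every exponent $\alpha<1$ rather than a bi-Lipschitz statement; correspondingly, in the sequel one applies this to $G_{\ww,\ww'}$ and $G_{\ww',\ww}$ and lets $\alpha\uparrow1$ to conclude that $G_{\ww,\ww'}$ preserves topological entropy.
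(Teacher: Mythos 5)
Your proof is correct and follows the same strategy as the paper's: build a bijection of $\N$ by matching the $m$th occurrence of each letter in one weight sequence with its $m$th occurrence in the other, realize $G$ as the induced coordinate rearrangement, and extract the H\"older bound from the fact that this permutation has asymptotically unit ratio. Your $g(n)=\min\{k:\pi(k)\ge n\}$ plays the role of the paper's $m_n$ (indeed $m_n=\max_{j\le n}\gamma^{-1}(j)$, and your $\pi=\gamma^{-1}$), and the argument that $g(n)/n\to 1$ yields the bound is the same argument as showing $m_n/n\to 1$.

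Two remarks. First, a cosmetic one: your $\pi$ is the inverse of the paper's $\gamma$ (you send the $m$th occurrence of $j$ in $\ww'$ to its $m$th occurrence in $\ww$; the paper does the reverse), so your $G_{\ww,\ww'}$ is the paper's $G_{\ww',\ww}$. This is harmless for the lemma itself, whose statement is symmetric, but if you carry your convention into Lemma~\ref{lem:equal} you would need to apply $G_{\ww',\ww}$ to push $E_{\ww'}(\alpha)$ into $E_\ww(\alpha)$; as written, $\sum_k f_{w_k,i_{\pi(k)}}$ does not reduce to a Birkhoff sum of $(\ww',\ii)$, whereas $\sum_k f_{w_k,i_{\gamma(k)}}=\sum_k f_{w'_{\gamma(k)},i_{\gamma(k)}}$ does. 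Second, a substantive one in your favor: the paper's proof tacitly assumes $\min_j q_j>0$ (it picks $0<\varepsilon<\min_i q_i/2$ and asserts each letter appears infinitely often in both sequences, neither of which is guaranteed when some $q_j=0$). Your preliminary recoding of the zero-frequency letters, together with the observation that a density-zero change of the weight sequence alters neither the H\"older conclusion nor the sets $E_\ww(\alpha)$, is a clean and correct way to close that gap.
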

\begin{proof}
	The construction clearly implies that $G_{\ww,\ww'}\circ G_{\ww',\ww}$ is the identity map on $\Sigma$.
	
	Since $\ww,\ww'\in\Omega$ are frequency regular sequences, we have that $\lambda_i$ appears infinitely often in $\ww,\ww'$.
	Thus, for every $n\geq1$ we can define $m_n$ such that $m_n$ is the smallest positive integer such that
	$\{1,\ldots,n\}\subseteq\{\gamma(1),\ldots,\gamma(m_n)\}$. Hence, for every $n\geq1$
	
	$$  \text{ if }d(\ii,\jj)= e^{-m_n-1}\text{ then }d(G_{\ww,\ww'}(\ii),G_{\ww,\ww'}(\jj))= e^{-n-1}.  $$
	
	Thus, to prove \eqref{eq:hold}, it is enough to show that
	\begin{equation}\label{eq:limneed}
		\lim_{n\to\infty}\frac{m_n}{n}=1.
	\end{equation}
	
	Clearly $m_n\geq n$, so $\liminf_{n\to\infty}\frac{m_n}{n}\geq1$. By the definition of $m_n$, for every $i=1,\ldots,N$,
	\begin{equation}\label{eq:ineq1}
		\#\{1\leq k\leq m_n:w_k=\lambda_i\}\geq\#\{1\leq k\leq n:w_k'=\lambda_i\},
	\end{equation}
	
	and there exists (at least one) $j=j(n)$ such that
	\begin{equation}\label{eq:ineq2}
		\#\{1\leq k\leq m_n:w_k=\lambda_j\}=\#\{1\leq k\leq n:w_k'=\lambda_j\}.
	\end{equation}
	
	By frequency regularity, for every $0<\varepsilon<\min_iq_i/2$ there exists $N\geq1$ such that for every $n\geq N$
	
	$$  \left|\frac{\#\{1\leq k\leq n:w_k=\lambda_i\}}{n}-q_i\right|,\left|\frac{\#\{1\leq k\leq n:w_k'=\lambda_i\}}{n}-q_i\right|<\varepsilon.  $$
	
	Hence, by \eqref{eq:ineq2} for every $n\geq N$
	
	\[  
	\begin{split}  
		\frac{m_n}{n}(q_{j(n)}-\varepsilon)&\leq\frac{m_n}{n}\frac{\#\{1\leq k\leq m_n:w_k=\lambda_{j(n)}\}}{m_n}\\  
		&=\frac{\#\{1\leq k\leq n:w_k'=\lambda_{j(n)}\}}{n}\leq q_{j(n)}+\varepsilon.  
	\end{split}  
	\]
	
	Thus, for every $n\geq N$, $\frac{m_n}{n}\leq 1+4\varepsilon$.
\end{proof}

\begin{prop}\label{lem:equal}
	For every  $\underline{q}$-frequency regular sequences $\ww,\ww'$ with the same frequency
\begin{equation}\label{eq:enteq}
h_{\rm top}(E_\ww(\alpha))=h_{\rm top}(E_{\ww'}(\alpha)).
\end{equation}
\end{prop}

\begin{proof}
Let  $\ww,\ww'$ be $\underline{q}$-frequency regular sequences. Let $G_{\ww,\ww'}$ be the map defined in \eqref{def:almlip}. It is enough to show that
\begin{equation}\label{eq:cont}
G_{\ww,\ww'}(E_{\ww'}(\alpha))\subseteq E_{\ww}(\alpha).
\end{equation}
Indeed, by \eqref{eq:hold},
$$
h_{\rm top}(E_{\ww'}(\alpha))=h_{\rm top}(G_{\ww',\ww}\circ G_{\ww,\ww'}(E_{\ww'}(\alpha))\leq h_{\rm top}(G_{\ww,\ww'}(E_{\ww'}(\alpha))\leq h_{\rm top}(E_{\ww}(\alpha)).
$$
The other inequality follows by symmetry.

Let $\gamma\colon\N\mapsto\N$ be the map defined in \eqref{def:gamma}. Let us define $p_n$ as the largest non-negative integer such that $\{1,\ldots,p_n\}\subseteq\{\gamma(1),\ldots,\gamma(n)\}$. In other words, $p_n=\min\{k\geq1:k\notin\{\gamma(1),\ldots,\gamma(n)\}\}-1$. Similarly to \eqref{eq:limneed} one can show that
\begin{equation}\label{eq:limneed2}
\lim_{n\to\infty}\frac{p_n}{n}=1.
\end{equation}
Let $\ii\in E_{\ww'}(\alpha)$. Then by \eqref{eq:limneed2}
\[
\begin{split}
\frac{1}{n}\sum_{k=0}^{n-1}f(\sigma^k\ww,\sigma^{k}G_{\ww,\ww'}(\ii))&=\frac{1}{n}\sum_{k=0}^{n-1}f_{w_k,i_{\gamma(k)}}\\
&=\frac{1}{n}\sum_{k=0}^{n-1}f_{w_{\gamma(k)}',i_{\gamma(k)}}\\
&=\frac{1}{n}\sum_{k=0}^{p_n-1}f_{w_k',i_{k}}+\frac{1}{n}\sum_{\substack{k=0 \\ \gamma(k)>p_n}}^{n-1}f_{w_{\gamma(k)}',i_{\gamma(k)}}\\
&\leq\frac{p_n}{n}\frac{1}{p_n}\sum_{k=0}^{p_n-1}f_{w_k',i_{k}}+\frac{n-p_n}{n}\max_{i,j}f_{i,j}\to\alpha,
\end{split}
\]
as $n\to\infty$.
Similarly,
$$
\frac{1}{n}\sum_{k=0}^{n-1}w_k\phi(\sigma^{k}G_{\ww,\ww'}(\ii))\geq\frac{p_n}{n}\frac{1}{p_n}\sum_{k=0}^{p_n-1}f_{w_k',i_{k}}+\frac{n-p_n}{n}\min_{i,j}f_{i,j}\to\alpha
$$
as $n\to\infty$. Hence, $G_{\ww,\ww'}(\ii)\in E_{\ww}(\alpha)$ which verifies \eqref{eq:cont}.
\end{proof}

\begin{proof}[Proof of Theorem~\ref{thm:goal}] Let $\nu$ be the Bernoulli measure associated to the weights $\underline{q}=(q_1,\ldots,q_N)$. Simple calculations show that the conditional pressure $P_\nu(\langle f-\alpha,\p\rangle)$ defined in \eqref{eq:condpresdefdef} equals to $P_{\underline{q}}(\langle f-\alpha,\p\rangle)$ in \eqref{eq:simplepres}.
	
	Hence, by applying Theorem~\ref{thm:typmain} we get that for every $\alpha$ and $\nu$-almost every $\ww$
		\[
	\begin{split}
	h_{\rm top}(E_{\ww}(\alpha))&=\sup\{h_\mu:\mu\in\mathcal{E}_\nu(\Gamma)\text{ and }\sum_{i,j}^{K,N}f_{j,i}\mu([j,i])=\alpha\}-h_\nu\\
	&=\inf_{\p\in\R^d}P_{\underline{q}}(\langle\p,f-\alpha\rangle).
	\end{split}
	\]
	By convexity, $\inf_{\p\in\R^d}P_{\underline{q}}(\langle\p,f-\alpha\rangle)$ is attained at $\p^*$. By Theorem~\ref{thm:equillibrium}, we know that the measure $\mu_{\p^*}$ where the supremum is attained can be chosen such that
	$$
C^{-1}\frac{Y(f_{\p^*},\ww,\ii)}{W_{|\ww|}(f_{\p^*},\ww)}\nu([\ww])\leq\mu_{\p^*}([\ww,\ii])\leq C\frac{Y(f_{\p^*},\ww,\ii)}{W_{|\ww|}(f_{\p^*},\ww)}\nu([\ww]),
	$$
	hold for some uniform constant $C>0$, where $f_{\p^*}(\ww,\ii)=\langle\p,\lambda_{w_0}\phi_{i_0}-\alpha\rangle$.	However, in this case,
	$$
	\eta([\ww,\ii])=\frac{Y(f_{\p^*},\ww,\ii)}{W_{|\ww|}(f_{\p^*},\ww)}\nu([\ww])=\prod_{k=0}^{|\ww|-1}\frac{q_{w_k}e^{\langle\p^*,\lambda_{w_k}\phi_{i_k}-\alpha\rangle}}{\sum_{i=1}^Ke^{\langle\p^*,\lambda_{w_k}\phi_{i}-\alpha\rangle}}
	$$
	is clearly an ergodic Bernoulli measure on $\Gamma$, since $\mu_{\p^*}$ is equivalent to $\eta$, we have $\eta=\mu_{\p^*}$. This shows that the supreme is attained at Bernoulli measures.
	
	Finally, since $\nu$-almost every sequence $\ww$ is $\underline{q}$-frequency regular, the statement follows by Proposition~\ref{lem:equal}.
\end{proof}

\begin{proof}[Proof of Theorem~\ref{thm:irreg2}]
Since the function $g(i)=\sum_{j=1}^Nq_jf_{j,i}$ is not constant by assumption, the possible values of $\alpha$, for which $\sum_{i,j}p_{j,i}f_{j,i}=\alpha$ and $\sum_ip_{j,i}=q_j$ form a nontrivial closed interval. Hence, the statement follows by Theorem~\ref{thm:contgen}.

\end{proof}

Now we finish the paper by showing the necessity of the frequency regular condition to have non-degenerate spectrum. Example~\ref{ex:alter} follows by the next example.

\begin{ex}\label{ex:degen}
	There exists a sequence $\ww\in\{0,1\}^\N$, which is not frequency regular, such that the following holds: For every continuous potential $\varphi\colon\{0,1\}^\N\mapsto\R$, $E_\ww(\alpha)=\emptyset$ for every $\alpha\in\R\setminus\{0\}$.
	
	
	Moreover, if $\varphi$ depends only the first symbol then $E_\ww(0)\neq\emptyset$ if and only if $\varphi_0\varphi_1\leq0$, moreover if additionally $\varphi_0\neq-\varphi_1$ then $h_{\rm top}(E_\ww(0))<\log 2$.
\end{ex}

\begin{proof}[Proof of Example~\ref{ex:degen}]
	First, let us define the sequence $\ww\in\{0,1\}^\N$. Let $\{M_n\}_{n=0}^\infty$ be a fast increasing sequence, that is, suppose that $2M_{n}<M_{n+1}$ for every $n\geq0$ and $\lim_{n\to\infty}\frac{\sum_{j=1}^nM_j}{M_{n+1}}=0$. Let $\ww:=(w_0,w_1,\ldots)$, where
	$$
	w_k=\begin{cases}
	0 & \text{if }2M_{n-1}< k\leq M_n,\\
	1 & \text{if }M_n<k\leq 2M_n.
	\end{cases}
	$$
	
	Clearly, $\ww$ is not frequency regular. Moreover, since for every $\ii\in\Sigma$
	$$
	\left|\frac{1}{M_n}\sum_{k=0}^{M_n}w_k\varphi(\sigma^k\ii)\right|\leq\frac{\max_{\ii\in\Sigma}|\varphi(\ii)|\sum_{\ell=0}^{n-1}M_\ell}{M_n}\to0\text{ as }n\to\infty,
	$$
	we get that $E_\ww(\alpha)=\emptyset$ for every $\alpha\neq0$. On the other hand, if $\min_{\ii\in\Sigma}\varphi(\ii)>0$ then
	$$
	\frac{1}{2M_n}\sum_{k=0}^{2M_n}w_k\varphi(\sigma^k\ii)\geq\frac{\min_{\ii\in\Sigma}\varphi(\ii)\sum_{\ell=0}^{n}M_\ell}{2M_n}\to\frac{\min_{\ii\in\Sigma}\varphi(\ii)}{2}\text{ as }n\to\infty,
	$$
	so $E_\ww(0)=\emptyset$ as well. Similarly, $E_\ww(0)=\emptyset$ also in the case if $\max_{\ii\in\Sigma}\varphi(\ii)<0$.
	
	Now, suppose that $\varphi(\ii)=\varphi_{i_0}$. Using the previous calculations if $\varphi_0\varphi_1>0$ then $E_\ww(\alpha)=\emptyset$ for every $\alpha\in\R$. So we may assume that $\varphi_0\varphi_1\leq0$. If $\varphi_0=0$ then the sequence $(0,0,\ldots)$ belongs to $E_\ww(0)$, so let us assume $\varphi_0<0<\varphi_1$. Then let us define the sequence $\ii$ inductively by the rule $i_{m+1}=0$ if and only if $A_m(\ii)>0$. Thus, $\ii$ belongs to $E_\ww(0)$.

	Additionally, suppose that $\varphi_0\neq-\varphi_{1}$. For every $m$, let $n_m$ be such that $M_{n_m}< m\leq M_{n_m+1}$. By the definition of $\ww$ we get that
	$$A_m(\ii)=\frac{1}{m}\sum_{k=0}^mw_k\varphi_{i_k}=\frac{M_{n_m}\sum\limits_{k=0}^{M_{n_m}}w_k\varphi_{i_k}}{mM_{n_m}}+\frac{\sum\limits_{k=M_{n_m}+1}^{\min\{m,2M_{n_m}\}}\varphi_{i_k}}{m}.
	$$ Since $\frac{\sum\limits_{k=0}^{M_{n_m}}w_k\varphi_{i_k}}{M_{n_m}}\to0$ as $m\to\infty$ and $\frac{M_{n_m}}{m}$ is bounded, we get $A_m(\ii)\to0$ if and only if
	$$
	\frac{\sum_{\ell=0,1} \#\{M_{n_m}<k\leq \min\{m,2M_{n_m}\}:i_k=\ell\}\varphi_\ell}{m}\to0.
	$$
	In particular, $A_m(\ii)\to0$ implies that
	\begin{equation}\label{eq:thiis}
	\frac{\#\{M_{n}<k\leq 2M_{n}:i_k=0\}}{M_n}\to\frac{\varphi_1}{\varphi_1-\varphi_0}\text{ as }n\to\infty.
	\end{equation}
	Denote $F$ the set of all $\ii\in\Sigma$, which satisfy \eqref{eq:thiis}. Then $h_{\rm top}(E_\ww(0))\leq h_{\rm top}(F)$.
	
	For short, let $p=\frac{\varphi_1}{\varphi_1-\varphi_0}$. Well known (for example, it is an application of Stirling's formula) that there exists $K(p)>|H'(p)|$, where $H(p) = -p\log p -(1-p)\log(1-p)$ such that for every $\varepsilon>0$ there exists $L\geq1$ such that for every $n\geq L$
	$$
	\#\left\{\ii\in\{0,1\}^n:\left|\frac{\#\{0<k\leq n:i_k=0\}}{n}-p\right|<\varepsilon\right\}\leq e^{(-p\log p-(1-p)\log(1-p)+K(p)\varepsilon)n}
	$$ and by \eqref{eq:topentbasic},
	\[
	\begin{split}
	h_{\rm top}(F)&\leq\liminf_{n\to\infty}\frac{1}{2M_n}\log\#\left\{\ii\in\Sigma_{2M_n}:F\cap[\ii]\neq\emptyset\right\}\\
	&\leq\lim_{n\to\infty}\frac{1}{2M_n}\log\prod_{k=1}^n2^{M_{k}-2M_{k-1}}e^{(-p\log p-(1-p)\log(1-p)+K(p)\varepsilon)M_{k}}\\
	&=\frac{\log2-p\log p-(1-p)\log(1-p)+K(p)\varepsilon}{2}.
	\end{split}
	\]
	Since $\varepsilon>0$ was arbitrary and by assumption $p\neq1/2$, we get $h_{\rm top}(F)<\log2$, which completes the proof.
\end{proof}

\end{document}